\UseRawInputEncoding
\documentclass[12pt]{amsart}

\def\eps{{\varepsilon}}

\def\Cov{{\rm Cov}}

\def\Prob{{\mathbb{P}}}

\def\Var{{\rm Var}}

\def\EXP{{\mathbb{E}}}

\def\bbL{\mathbb{L}}

\def\naturals{\mathbb{N}}

\def\reals{\mathbb{R}}

\def\integers{\mathbb{Z}}

\def\P{{\partial }}

\def\bG{\mathbf{G}}

\def\brC{{\bar C}}

\def\brF{{\bar F}}

\def\brc{{\bar c}}

\def\brk{{\bar k}}

\def\brl{{\bar l}}

\def\brp{{\bar p}}

\def\cA{\mathcal{A}}

\def\cD{\mathcal{D}}

\def\cF{\mathcal{F}}

\def\cS{\mathcal{S}}

\def\cX{\mathcal{X}}

\def\fl{\mathfrak{l}}

\def\fp{\mathfrak{p}}

\def\fq{\mathfrak{q}}

\def\hC{{\hat C}}

\def\tF{{\tilde F}}

\def\tT{{\tilde T}}

\def\ta{{\tilde a}}

\def\tb{{\tilde b}}

\def\tc{{\tilde c}}

\def\tl{{\tilde l}}


\makeatother

\def\beq{\begin{equation}}
\def\eeq{\end{equation}}

\usepackage[usenames,dvipsnames,svgnames,table]{xcolor}
\usepackage{amssymb}

\usepackage[margin=3cm]{geometry}

\newtheorem{theorem}{Theorem}[section]
\newtheorem{proposition}[theorem]{Proposition}
\newtheorem{lemma}[theorem]{Lemma}

\newtheorem{corollary}[theorem]{Corollary}

\theoremstyle{definition}
\newtheorem{remark}[theorem]{Remark}

\numberwithin{equation}{section}

\newcommand{\R} {\mathbb{R}}

\newcommand{\Z} {\mathbb{Z}}
\newcommand{\N} {\mathbb{N}}
\renewcommand{\P} {\mathbb{P}}
\newcommand{\E} {\mathbb{E}}

\newcommand{\lf} {\lfloor}
\newcommand{\rf} {\rfloor}

\def\DS{\displaystyle}
\newcommand{\ignore}[1]{}

\author{Dmitry Dolgopyat}
\address{
Department of Mathematics, University of Maryland, College Park, 
MD 20741, USA}
\email{dmitry@math.umd.edu}

\author{Marco Lenci}
\address{
Dipartimento di Matematica, Universit\`a  di Bologna, 40126 Bologna, Italy; 
Istituto Nazionale di Fisica Nucleare, Sezione di Bologna, 40126 Bologna, 
Italy}
\email{marco.lenci@unibo.it}

\author{P\'eter N\'andori}
\address{
Department of Mathematical Sciences, Yeshiva University, New York, NY 
10016, USA}
\email{peter.nandori@yu.edu}

\title[Global observables for RW: law of large numbers]
{Global observables for random walks: \\ law of large numbers}

\subjclass[2000]{Primary 60F15, 60G50; Secondary 37A40, 60K37}
\keywords{Random walks, law of large numbers, arcsine law, 
random walks in random scenery, global observables.}

\begin{document}

\maketitle

\begin{abstract}
  We consider the sums $\DS T_N=\sum_{n=1}^N F(S_n)$ where $S_n$ is a random walk
  on $\Z^d$ and $F:\Z^d\to \R$ is a global observable, that is, a bounded function
  which admits an average value when averaged over large cubes.
  We show that $T_N$ always satisfies the weak Law of Large Numbers but the strong law
  fails in general except for one dimensional walks with drift. Under additional regularity assumptions on $F$, we obtain the
  Strong Law of Large Numbers and estimate the rate of convergence. The growth exponents which we obtain turn out to be optimal
  in two special cases: for quasiperiodic observables and for random walks in random scenery.
\end{abstract}

\section{Introduction}
\subsection{Motivation.}
Ergodic theory was created in the beginning of the last century motivated by the needs of homogenization
(more specifically the quest to justify the kinetic equations of statistical mechanics). By now ergodic theory is
a flourishing subject. Namely, ergodic theorems are established under very general conditions
and ergodic properties of a large number of smooth systems are known (see e.g. \cite{KH95}).
Moreover, ergodicity turns out to be useful in the questions of averaging and homogenization
(see e.g. \cite{JKO94, LM88, PV81, SVM07}). However, many dynamical systems appearing in applications
preserve infinite invariant measure and ergodic theory of infinite-measure-preserving systems is much less
developed. In fact, most of the work in infinite ergodic theory (see e.g. \cite{A97})
deals with local ($L^1$) observables 
while from physical point of view it is more natural to consider extensive observables (\cite{Kh49, Ru78})
which admit an infinite-volume average. One explanation for this is that while for local observables
ergodic theorems can be obtained with minimal regularity assumptions on the observable,
this is not the case for global observables as the present paper shows.
The study of ergodic properties of infinite measure transformations
with respect to extensive functions started relatively recently \cite{L10}. 
In particular, mixing properties of several systems with respect to global observables 
were obtained in \cite{BGL18, BL19, DN18, L10, L17}. 
A natural question is thus to investigate the law of large numbers
for global observables. A first step in this 
direction was recently taken in \cite{LM18}. In this paper we carry out a 
detailed analysis in the simplest possible setting: random walks on 
$\integers^d$. Our goal is to ascertain the correct spaces for the law of 
large numbers in various cases.

\subsection{Results.}
\label{SSResults}
Let $X_1, X_2,...$ be an iid sequence of $\mathbb Z^d$ valued random variables. 
Let $S_0 = 0$ and $\DS S_N = \sum_{n=1}^{N} X_n$ be the corresponding random walk.
We assume that 
\begin{enumerate}
\item (non-degeneracy) the smallest group supporting the range of $X_1$ is $\mathbb Z^d,$ 
\item (aperiodicity) $g.c.d.\{ n>0: \mathbb P (S_n = 0) > 0 \} = 1.$
\end{enumerate}

We will also assume that $S_n$ is in the normal domain of attraction of 
a stable law
with some index $\alpha$. That is, there is a non-degenerate $d$ dimensional random variable $Y $
such that
$$
\frac{S_n}{n^{1/\alpha }} \Rightarrow Y \text{ if } \alpha \in (0,1) \text{ and }
\frac{S_n - n \E(X_1)}{n^{1/\alpha }} \Rightarrow Y \text{ if } \alpha \in (1,2].
$$
To avoid uninteresting minor technical difficulties, we will mostly assume that $\alpha \neq 1$.

We define several function spaces which proved to be useful in the previous studies of global observables
\cite{DN18}.
Without further notice, we always assume that all functions are bounded.

Given a 
non-empty
subset $V \subset \mathbb Z^d$ and $F \in L^{\infty}(\mathbb Z^d, \mathbb R)$, we write
$$
\bar F_V = \frac{1}{|V|} \sum_{v \in V} F(v).
$$
Given $(a_1, b_1, \dots a_d, b_d)$
with $a_i < b_i$, for $j=1, \dots d$,
let 
\begin{equation}
\label{defV}
V(a_1, b_1, \dots a_d, b_d)=\{x\in {\integers}^d:\; x_j\in [a_j, b_j] \text{ for } j=1, \dots d\}. 
\end{equation}

Let $\bG_+$ be the space of bounded functions on $\integers$ such that the limit
$
\DS \bar F_+ = \lim_{v \to \infty} \bar F_{[0,v]}$
exists
and 
$\bG_-$ be the space of bounded functions on $\integers$ such that the limit
$\DS \bar F_- = \lim_{v \to \infty} \bar F_{[-v,0]}$ exists. Set $\bG_\pm=\bG_+\cap\bG_-.$
Define
$$\bG_0=\{F\in L^\infty(\integers^d, \reals):  \exists \brF \;\; \forall a_1 < b_1, \dots 
a_d < b_d\quad \lim_{L\to \infty} 
\brF_{V(a_1 L, b_1 L, \dots, a_d L, b_d L)}=\brF\}.$$
Note that in dimension 1, $\bG_0=\{F\in \bG_\pm: \brF_+=\brF_-\}.$
Let $\bG_U$ be the space of functions such that for each $\eps$ there is $L$ 
such that for all cubes $V$ with side larger than $L$ we have
\begin{equation}
\label{CubeAve}
|\brF_V-\brF|\leq \eps.
\end{equation}
Let $\bG_\gamma$ be the set of functions where \eqref{CubeAve} only holds if the center of $V$ is within
distance $L^\gamma$ 
of
the origin. Thus $\bG_U \subset \bG_\gamma .$ Also,
$\bG_\gamma\subset \bG_0$ if $\gamma>1.$
Finally, let 
\begin{gather*}
\bG_{\gamma}^{\beta} = \{
F \in L^{\infty}(\integers^d, \reals): \exists \brF\;\;
\forall a_1 < b_1, ..., a_d < b_d\;\;
\exists C: \forall L, 
\forall z \in \integers^d, |z| < L^{\gamma}, \\
|\brF_{z + V(a_1L, b_1L,...,a_dL,b_dL)} - \brF |< C L^{d(\beta -1)}
\}.
\end{gather*}
Clearly, $\bG_{\gamma}^{\beta} \subset \bG_{\gamma} $ for any $\beta <1$.
Also, let $\DS \bG_\infty^\beta=\bigcap_{\gamma>0} \bG_\gamma^\beta.$

Our goal is to study Birkhoff sums
$$ T_N=\sum_{n=1}^N F(S_n) . $$
In particular we would like to know if $\frac{T_N}{N}$ converges to $\brF$ for $F$ in each of the spaces
$\bG_*$ introduced above.

Our results could be summarized as follows.

\begin{theorem}
\label{ThWLLN}
Suppose that $\E(X)=0$ and that
$S_N$ is in the normal domain of attraction of  
a
stable law of some index $\alpha>1.$
Then for all $F\in \bG_0$, $\dfrac{T_N}{N}\Rightarrow \brF$ in law as $N\to \infty.$
\end{theorem}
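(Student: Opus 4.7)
Since $\brF$ is a deterministic constant, it suffices to prove $L^2$-convergence $T_N/N\to\brF$, which splits into showing $\E[T_N/N]\to\brF$ and $\Var(T_N/N)\to 0$.

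\textbf{First moment.} Under the non-degeneracy, aperiodicity, and normal-attraction hypotheses, a local central limit theorem yields
$$p_n(x):=\P(S_n=x)=n^{-d/\alpha}\,q(x/n^{1/\alpha})+o(n^{-d/\alpha})$$
uniformly in $x\in\integers^d$, with $q$ the density of $Y$. I would partition $\integers^d$ into cubes $C_j$ of side $\eps n^{1/\alpha}$ centered at $z_j n^{1/\alpha}$: on each cube $q(\cdot/n^{1/\alpha})$ is essentially the constant $q(z_j)$, and the $\bG_0$-hypothesis applied with $L=n^{1/\alpha}$ gives $\bar F_{C_j}\to\brF$ as $n\to\infty$. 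Replacing $F$ by $\bar F_{C_j}$ on each $C_j$ in $\sum_x F(x)p_n(x)$ produces a Riemann sum converging to $\brF\int q(y)\,dy=\brF$, so $\E[F(S_n)]\to\brF$; Ces\`aro summation in $n$ yields $\E[T_N/N]\to\brF$.

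\textbf{Second moment.} For $n\le m$ with $k=m-n$, the Markov property and the decomposition $G_k:=F\ast p_k=\brF+\eta_k$ give
$$\Cov\!\bigl(F(S_n),F(S_m)\bigr)=\Cov\!\bigl(F(S_n),\eta_k(S_n)\bigr).$$
A direct check (average the convolution identity over cubes and invoke dominated convergence) shows $G_k\in\bG_0$ with the same mean $\brF$, so $\eta_k\to 0$ pointwise as $k\to\infty$, uniformly on $\{|x|\le M k^{1/\alpha}\}$ for each fixed $M$. I then split $\Var(T_N/N)=N^{-2}\sum_{n,m}\Cov$ into pairs with $\min(n,k)\le K$ (where the trivial bound $|\Cov|\le\|F\|_\infty^2$ and the $O(NK)$ count yield an $O(K/N)$ contribution) and pairs with $\min(n,k)>K$ (where the covariance decay takes over). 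Sending $N\to\infty$ and then $K\to\infty$ gives $\Var(T_N/N)\to 0$.

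\textbf{Main obstacle.} The principal difficulty is the \emph{uniform} decay of $\Cov\!\bigl(F(S_n),F(S_{n+k})\bigr)$ in the asymmetric regime $n\gg k^\alpha$: there $S_n$ typically lives at scale $n^{1/\alpha}\gg k^{1/\alpha}$, outside the range where the $\bG_0$-hypothesis directly controls $\eta_k(S_n)$ pointwise. My strategy is to estimate $\E|\eta_k(S_n)|$ \emph{globally} via the LCLT representation
$$\E|\eta_k(S_n)|\,\approx\,n^{-d/\alpha}\sum_{x}q(x/n^{1/\alpha})|\eta_k(x)|,$$
viewing the right-hand side as a rescaled average of $|\eta_k|$ against the stable density and exploiting the $\bG_0$-structure of $\eta_k$ inherited from the convolution $F\ast p_k$. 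For the reverse asymmetric regime $k\gg n^\alpha$ one swaps the conditioning in the Markov decomposition. Carrying out this covariance-uniformity estimate—which is where the combination of LCLT, $\bG_0$-averaging, and the smoothing effect of the convolution really enters—is the heart of the proof.
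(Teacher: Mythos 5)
Your first-moment step and the broad strategy (LLT plus $\bG_0$-averaging for the second moment) match the paper, but the step you defer as the ``main obstacle'' is a genuine gap, and in fact the statement your decomposition needs is false in the generality of $\bG_0$. Your split of $\Var(T_N/N)$ uses a threshold $K$ that is \emph{fixed} (independent of $N$), so to make the second group small you need $\sup\{|\Cov(F(S_n),F(S_{n+k}))| : n>K,\ k>K\}\to 0$ as $K\to\infty$. Take $d=1$, $\alpha=2$, and let $F$ oscillate at a growing sequence of scales: $F(x)=\mathrm{sign}\,\sin(2\pi x/M_j)$ on $|x|\in[R_j,R_{j+1})$ with $R_j=2^j$, $M_j\to\infty$, $M_j/R_j\to 0$. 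One checks $F\in\bG_0$ with $\brF=0$ (any window $[aL,bL]$ meets boundedly many annuli and each contributes $O(M_j/L)$ to the average). But for fixed $k>K$ and $n$ chosen with $n^{1/2}\sim R_j$, $j$ large, the increment $S_{n+k}-S_n$ is $O(k^{1/2})\ll M_j$, so $F(S_{n+k})=F(S_n)$ with probability close to $1$, while $\E F(S_n)\to 0$; hence $\Cov(F(S_n),F(S_{n+k}))$ is close to $1$ along this sequence. So no fixed $K$ works, and your proposed repair via $\E|\eta_k(S_n)|$ cannot save it: $\bG_0$ controls \emph{signed} averages of $\eta_k=F*p_k-\brF$ over large cubes, not averages of $|\eta_k|$, and in the example $|\eta_k|\approx 1$ throughout the relevant annulus.

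The obstacle is self-inflicted and the paper avoids it entirely: since one may normalize $\brF=0$ and only needs $\E(T_N^2)=o(N^2)$, it suffices to discard all pairs with $n_1\le\eps_1 N$ or $n_2-n_1\le\eps_1 N$ by the trivial bound $\|F\|_\infty^2$ (they are an $O(\eps_1)$ fraction of the $N^2$ pairs), and to treat only pairs whose time index and gap are both of order $N$. For those, after truncating to $|S_{n_1}|,|S_{n_2}|<KN^{1/\alpha}$, the Markov property plus the LLT at time $m=n_2-n_1\ge\eps_1 N$ reduces the inner conditional expectation to averages of $F$ over cubes of side $\delta N^{1/\alpha}$ centered within distance $O(N^{1/\alpha})$ of the origin, which are small precisely by the $\bG_0$ hypothesis at scale $L\sim N^{1/\alpha}$ (this is \eqref{eq:weak1} in the paper). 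In short: replace your fixed cutoff $K$ by a cutoff proportional to $N$, and the uniform covariance decay you could not prove is no longer needed.
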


\begin{theorem}
\label{ThArc}
Suppose that $d=1,$ $\E(X)=0$ and $V(X)<\infty$. Then 
for all $F\in \bG_\pm$, $\dfrac{T_N}{N}$ converges in law as $N\to \infty.$
In particular, if $\brF_-=0$ and $\brF_+=1$ then the limiting law has arcsine 
distribution: for $z\in [0,1]$
 $$ \lim_{N\to \infty} \Prob\left(\frac{T_N}{N}\leq z\right)=\frac{2}{\pi} \arcsin\sqrt{z}. $$
\end{theorem}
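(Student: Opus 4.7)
The plan is to decompose $F$ into a ``step'' part whose Birkhoff averages reproduce the arcsine law and a residual $G$ with vanishing one-sided averages whose Birkhoff averages vanish in $L^2$. Write
\[
F = \brF_-\,\one_{\{x<0\}} + \brF_+\,\one_{\{x\geq 0\}} + G,
\]
where, using the hypothesis $F\in\bG_\pm$, one checks that $G\in\bG_\pm$ with $\bar G_+=\bar G_-=0$. Accordingly,
\[
\frac{T_N}{N}=\brF_- + (\brF_+-\brF_-)\,A_N + R_N,\qquad
A_N:=\frac{1}{N}\#\{n\leq N:\,S_n\geq 0\},\qquad
R_N:=\frac{1}{N}\sum_{n=1}^N G(S_n).
\]

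For the step part, Donsker's invariance principle gives $(S_{\lfloor N\cdot\rfloor}/\sqrt N)\Rightarrow(\sigma B_\cdot)$ in the Skorokhod topology. The occupation functional $w\mapsto\int_0^1\one_{\{w(t)\geq 0\}}\,dt$ is continuous at paths that spend zero Lebesgue-time at $0$, which is almost surely the case for Brownian motion. Hence $A_N\Rightarrow\int_0^1\one_{\{B_t\geq 0\}}\,dt$, which by L\'evy's arcsine theorem has the arcsine distribution on $[0,1]$.

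To handle $R_N$ I would use a second-moment argument. Define $H(x):=\sum_{0\leq y\leq x}G(y)$ for $x\geq 0$ and $H(x):=-\sum_{x<y<0}G(y)$ for $x<0$, so that $G(x)=H(x)-H(x-1)$; the hypothesis $\bar G_\pm=0$ gives $H(x)=o(|x|)$ on both sides. For $h_k(x):=\EXP[G(x+S_k)]$, an Abel transformation yields
\[
h_k(x)=\sum_z H(z+x)\bigl[\Prob(S_k=z)-\Prob(S_k=z+1)\bigr],
\]
and the gradient form of the local central limit theorem (available in $d=1$ under $V(X)<\infty$) gives, for any prescribed $\eps>0$, a bound of the form
\[
|h_k(x)|\leq \eps\Bigl(1+\tfrac{|x|}{\sqrt k}\Bigr)+\tfrac{C_\eps}{\sqrt k}.
\]
The Markov property gives $\EXP[G(S_n)G(S_m)]=\EXP[G(S_n)\,h_{m-n}(S_n)]$ for $n<m$; combined with $\EXP|S_n|\lesssim\sqrt n$, this yields $|\EXP[G(S_n)G(S_m)]|\lesssim \eps\bigl(1+\sqrt{n/(m-n)}\bigr)+C_\eps/\sqrt{m-n}$. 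Summing over $1\leq n<m\leq N$ produces
\[
\EXP\!\Bigl[\bigl(\textstyle\sum_{n=1}^N G(S_n)\bigr)^2\Bigr]\leq C\eps\,N^2 + C(\eps)\,N\log N,
\]
and since $\eps>0$ is arbitrary, $\EXP[R_N^2]\to 0$, whence $R_N\to 0$ in probability. Slutsky's theorem then gives $T_N/N\Rightarrow \brF_-+(\brF_+-\brF_-)A$ with $A$ arcsine, specializing to the displayed arcsine limit when $\brF_-=0,\ \brF_+=1$.

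The main obstacle is the two-point estimate. Because $G$ is controlled only through the \emph{global} bound $H(x)=o(|x|)$ and possesses no local regularity, the smoothness needed to force $h_{m-n}$ small has to come entirely from the walk, through a gradient version of the LCLT. The delicate regime is $|m-n|$ comparable to $n$, where the smoothing scale $\sqrt{m-n}$ matches the typical size $|S_n|\sim\sqrt n$; there the bound on $h_k(x)$ depends on the ratio $|x|/\sqrt k$ and integrates only to something proportional to $\eps$, forcing the $\eps\to 0$ limit at the end.
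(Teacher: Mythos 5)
Your skeleton is sound and genuinely different from the paper's argument: you split $F$ into the step observable $\brF_-\one_{\{x<0\}}+\brF_+\one_{\{x\ge 0\}}$ plus a residual $G$ with $\bar G_\pm=0$, obtain the arcsine part from Donsker's principle together with the a.s.\ continuity of $w\mapsto\int_0^1\one_{\{w(t)\ge 0\}}\,dt$ at Brownian paths, and kill the residual in probability; the paper instead proves convergence of all moments by induction (the hypothesis $({\bf H}_k)$, uniform over starting points $|x|\le K\sqrt N$), using Lemma \ref{lemma:integral} and identifying the limit by comparison with the Heaviside observable via the classical arcsine law. Note also that in $d=1$ one has $\bG_0=\{F\in\bG_\pm:\brF_+=\brF_-\}$, so your $G$ lies in $\bG_0$ with $\bar G=0$, and finite variance places the walk in the normal domain of attraction with $\alpha=2$; hence the statement $R_N\to 0$ in probability is exactly Theorem \ref{ThWLLN}, and quoting it would complete your proof immediately.

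The gap is in your own treatment of the residual, precisely at the step you flag. After Abel summation, your bound $|h_k(x)|\le \eps\bigl(1+|x|/\sqrt k\bigr)+C_\eps/\sqrt k$ needs weighted $\ell^1$ control of the discrete gradient $\nabla p_k(z)=p_k(z)-p_k(z+1)$, $p_k(z)=\Prob(S_k=z)$, over all of $\Z$: namely $\sum_z|\nabla p_k(z)|=O(k^{-1/2})$ and $\sum_z|z|\,|\nabla p_k(z)|=O(1)$. Under only $V(X)<\infty$ and aperiodicity, Fourier inversion does give the uniform bound $\sup_z|\nabla p_k(z)|\le C/k$, but combining it with a Chebyshev tail bound for $|z|>A\sqrt k$ yields only $O(k^{-1/3})$ and $O(k^{1/3})$ for the two sums, and without higher moments one cannot do better in general: on the sparse far range of $S_k$ there is no cancellation between $p_k(z)$ and $p_k(z+1)$, so $\sum_{|z|>A\sqrt k}|z||\nabla p_k(z)|$ is comparable to $\EXP\bigl[|S_k|;|S_k|>A\sqrt k\bigr]$, which finite variance alone only makes $o(\sqrt k)$, not $O(1)$. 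With the weaker rates the double sum is no longer $O(\eps N^2)+o(N^2)$; for instance the term $\eps\,\EXP|S_n|\cdot\sum_z|\nabla p_{m-n}(z)|$ sums over $n<m\le N$ to order $\eps N^{13/6}$, so $\EXP[R_N^2]\to 0$ does not follow. Thus the ``gradient form of the LCLT'' you invoke is not available under the theorem's hypotheses. The repair is either to assume more moments, or (better) to replace this step by the paper's proof of Theorem \ref{ThWLLN}: truncate both $S_{n_1},S_{n_2}$ to the window $K\sqrt N$, discard the $O(\eps_1 N^2)$ pairs with $n_1$ or $n_2-n_1$ less than $\eps_1 N$, and use the plain LCLT on boxes of side $\delta\sqrt N$ together with \eqref{eq:weak1}, which needs no gradient estimates. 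With that substitution your route goes through and is an attractive, more elementary alternative to the moment induction.
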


Note that Theorem \ref{ThArc} is a simple homogenization result: it says that the limit
distribution of $\frac{T_N}{N}$ remains the same if the oscillatory function $F$ is replaced by
a more regular function
$\brF_- 1_{x<0}+\brF_+ 1_{x\geq 0}$ (see \cite{L39}). 
This confirms the usefulness of global observables in applications.

\begin{theorem}
\label{ThGGamma}
Suppose that $S_N$ is in the normal domain of attraction of the stable law of 
some index $\alpha$. Suppose that either

(i) $1 < \alpha \leq 2,$ $\E(X_1) \ne 0$ and $\gamma>1$ or

(ii) $1 < \alpha \leq 2$, $\E(X_1)=0$ and $\gamma>1/\alpha$ or

(iii) $\alpha\leq 1$ and $\gamma>1/\alpha.$

Then, for all $F\in \bG_\gamma,$ 
$\frac{T_N}{N}\to\brF$ almost surely.
\end{theorem}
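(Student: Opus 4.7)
The plan is to reduce to $\brF = 0$ (by replacing $F$ with $F - \brF$) and show $T_N/N \to 0$ almost surely. I would work at a spatial scale $L = L_N$ chosen so that $L^\gamma$ dominates the walk's typical range $R_N$ through time $N$. In case (i), $R_N \sim N$ dictates $L \sim N^{1/\gamma}$, which is $o(N)$ since $\gamma > 1$; in cases (ii) and (iii), $R_N \sim N^{1/\alpha}$ (up to logarithmic corrections from a maximal inequality) dictates $L \sim N^{1/(\alpha\gamma)}$, which tends to infinity by $\gamma > 1/\alpha$. Tile $\integers^d$ with cubes $V_k$ of side $L$ and decompose $F = F^L + G^L$ where $F^L(x) := \brF_{V_{k(x)}}$ is piecewise constant and $G^L$ has vanishing average on every $V_k$. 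The hypothesis $F \in \bG_\gamma$ furnishes a sequence $\eps_N \to 0$ with $|F^L(x)| \leq \eps_N$ uniformly for $|x| \leq L_N^\gamma$.

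For the piecewise-constant part, standard maximal inequalities and Borel--Cantelli applied to tail bounds on $\max_{n \leq N}|S_n|$ give that the walk stays inside $B(0, L_N^\gamma)$ eventually almost surely, so $N^{-1}|\sum_n F^L(S_n)| \leq \eps_N \to 0$ a.s. For the residual $G^L$, my plan is to compute the second moment $\E\bigl[(\sum_n G^L(S_n))^2\bigr]$ via the Markov property: the covariance $\E[G^L(S_n) G^L(S_m)]$ reduces to $\sum_x p_n(x) G^L(x)(P_{m-n}G^L)(x)$, where $P_k$ is the walk's transition semigroup and $p_k$ its kernel. The central estimate comes from subtracting the cube average of $p_k$ (which vanishes when paired with $G^L$) and bounding the remainder by the local-CLT smoothness $\|\nabla p_k\|_{L^1} = O(k^{-1/\alpha})$, yielding
\[
|(P_k G^L)(x)| \leq C\, \|G^L\|_\infty \cdot L\, k^{-1/\alpha}
\]
(with an analogous Gaussian bound involving $\sqrt{k}$ around the drift location $k\,\E(X)$ in case (i)). Summing over $n,m \leq N$ produces a variance bound of order $o(N^2)$; combined with Chebyshev along a polynomial subsequence $N_j = \lfloor j^K \rfloor$ for large $K$ (Borel--Cantelli) and the trivial gap bound $|T_{N+1} - T_N| \leq \|F\|_\infty$ to fill in between subsequence points, this yields the almost sure convergence.

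The main obstacle is a delicate calibration of $L$ at the borderline exponent $\gamma$. A direct use of $|P_k G^L| \leq CLk^{-1/\alpha}$ closes the variance estimate only when $L = o(N^{1/\alpha})$, which matches $\gamma > 1/\alpha$ in cases (ii) and (iii) only after a refinement restricting the relevant covariance sum to cubes actually visited by the walk (at most $(R_N/L)^d$ in number) and exploiting the concentration of $p_n$ on scale $n^{1/\alpha}$. Case (i) additionally requires a renewal-theoretic input, namely $\E[\ell_N(x)] \to 1/|\E(X)|$ uniformly in the bulk of the drift trajectory, which effectively reduces the problem to the convergence of one-dimensional averages of $F$ along the drift ray, handled by $\gamma > 1$. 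A further subtlety is that the rate $\eps_N \to 0$ supplied by the definition of $\bG_\gamma$ is purely qualitative, so the a.s.\ statement must be proved in the form: for every $\eps > 0$, $\limsup_N |T_N/N| \leq C\eps$ almost surely, and then $\eps$ is sent to zero along a countable sequence.
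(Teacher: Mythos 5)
Your plan runs into a genuine calibration obstruction in case (ii) when $\gamma\in(1/\alpha,1]$, which is a substantive part of the theorem (e.g.\ finite-variance walks with $\gamma\in(1/2,1]$). To make the piecewise-constant part $F^L$ small on the cubes the walk actually visits, the definition of $\bG_\gamma$ forces the cube side to satisfy $L^\gamma\gtrsim N^{1/\alpha}$ (the walk's range), i.e.\ $L\gtrsim N^{1/(\alpha\gamma)}$. Your check that $L=o(N)$ is the wrong comparison: what the second-moment step needs is $L=o(N^{1/\alpha})$, because the smoothing bound $|P_kG^L(x)|\le C\,L\,k^{-1/\alpha}$ improves on the trivial bound $\|G^L\|_\infty$ only for time gaps $k\gtrsim L^\alpha$, so if $L\gtrsim N^{1/\alpha}$ then every pair $(n,m)$ with $m-n\le N\le L^\alpha$ contributes trivially and $\E\bigl[(\sum_{n\le N}G^L(S_n))^2\bigr]$ stays of order $N^2$. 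The two requirements on $L$ are compatible only when $\gamma>1$; for $\gamma\le 1$ the cube side must be at least the whole range, the walk never equidistributes over a single cube, and the zero-cube-average property of $G^L$ produces no cancellation. The refinement you propose (restricting to visited cubes, concentration of $p_n$) does not help, since the loss comes from short time gaps, not from spatial multiplicity. A second gap is case (i) in dimension $d\ge2$: your renewal-theoretic reduction to averages of $F$ "along the drift ray" is not licensed by $F\in\bG_\gamma$, which controls only $d$-dimensional cube averages; within a cube of side $L$ much larger than the transverse spread $N^{1/\alpha}$ the occupation measure sits on a thin tube, so neither the mean nor the variance of the $G^L$-part is small, and your argument is effectively one-dimensional. (A smaller issue: the gradient bound $\|\nabla p_k\|_{\ell^1}=O(k^{-1/\alpha})$ is an extra, unproved input under the mere normal-domain-of-attraction hypothesis.)

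The paper avoids all of this by blocking in time rather than in space. It fixes $\eta<1$, cuts $[0,N]$ into blocks of length $N^\eta$, and uses only a first-moment estimate (borrowed from the proof of Theorem~2.3 of \cite{DN18}): $\sup_{|x|\le k^{\eta\gamma}}\bigl|\E_x(T_k)\bigr|/k\to0$ as $k\to\infty$. This is applied to each block conditionally on the walk's position at the block's start, whose admissibility ($|S_{\lf jN^\eta\rf}|\le N^{\gamma_1}$ with $\gamma_1<\gamma\eta$) holds eventually a.s.\ by a maximal estimate (LLN in case (i), Marcinkiewicz/L\'evy in cases (ii)--(iii)). The fluctuations are then controlled for free: the centered block increments form bounded martingale differences, so Azuma's inequality plus Borel--Cantelli give the a.s.\ statement, with no variance computation involving the structure of $F$ and no kernel gradient estimates. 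If you try to repair your scheme for $\gamma\le1$, you will essentially be forced to import an estimate of exactly that first-moment type, at which point you have rejoined the paper's argument.
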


\begin{theorem}
\label{ThGGammaBeta}
Suppose $E(X_1) = 0$ and $E(|X_1|^k) < \infty$ for all $k \in \naturals$.
For $d\in \naturals$, let
$$
\rho_d(\beta):=
\begin{cases}
\frac12 & \text{if } \beta \leq \frac{d-1}{d} \\
\frac{d}{2} (\beta -1) + 1 & \text{if } \beta > \frac{d-1}{d},
\end{cases} \quad \quad
\gamma(d, \beta, \eps):=
\begin{cases} \frac{2}{\beta} & \text{if } d=1 \\
\frac{1}{\eps} & \text{if } d\geq 2. \end{cases}
$$
Then for every $d \in \naturals$, for every
$\beta \in [0,1)$ every
$\varepsilon >0$ 
and any $F \in \bG_{\gamma(d, \beta, \eps)}^{\beta}$ with $\brF = 0$, we have
$$\frac{T_N}{N^{\rho_d(\beta)+ \eps} }\to 0 \text{ 
almost surely as }N\to \infty.$$
\end{theorem}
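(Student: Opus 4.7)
The plan is to prove Theorem~\ref{ThGGammaBeta} by a high-moment estimate combined with Borel--Cantelli. For each positive integer $k$, I would establish a bound of the form
$$\E[T_N^{2k}] \leq C_k \, N^{2k\rho_d(\beta) + \nu(k)} (\log N)^{O(k)}$$
with $\nu(k)/k \to 0$ as $k \to \infty$. Choosing $k$ large enough that $\nu(k) < k\eps$, Markov's inequality will yield $\Prob(|T_N| > N^{\rho_d(\beta)+\eps}) \leq C_k N^{-k\eps}(\log N)^{O(k)}$, summable along any polynomial subsequence such as $N_j = 2^j$. Borel--Cantelli then gives $T_{N_j}/N_j^{\rho_d(\beta)+\eps}\to 0$ almost surely, and the trivial pointwise bound $|T_{N+1}-T_N|\leq \|F\|_\infty$ interpolates between consecutive $N_j$ to yield convergence along the full sequence.

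The backbone of the moment bound is a pointwise kernel estimate of the form
$$|(P^n F)(x)| \leq C\, n^{d(\beta-1)/2}$$
valid for $|x| \leq n^{\gamma/2 - \delta}$ with any fixed $\delta>0$ and $\gamma = \gamma(d,\beta,\eps)$. I would prove this by combining the local central limit theorem---which, since $X_1$ has finite moments of all orders, holds with super-polynomially decaying error---with the $\bG_\gamma^\beta$ regularity applied to a cube of side $\sqrt{n}\,(\log n)^c$ centered at $x$. The cube captures all but $n^{-A}$ of the transition mass (for any $A$), while its center satisfies the admissibility condition $|x| \leq L^\gamma$ for $L = \sqrt{n}\,(\log n)^c$, so that $|\bar F_V - \bar F|\leq C L^{d(\beta-1)}$ applies directly.

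Given the kernel estimate, I would expand $\E[T_N^{2k}]$ as a sum over ordered tuples $1\leq n_1 \leq \cdots \leq n_{2k}\leq N$. The Markov property lets one rewrite the expectation via auxiliary functions: setting $d_j = n_j - n_{j-1}$, $H_{2k}\equiv 1$, and $H_{j-1}(y)=(P^{d_j}(F\cdot H_j))(y)$, one has $\E[F(S_{n_1})\cdots F(S_{n_{2k}})] = \E[F(S_{n_1})H_1(S_{n_1})]$. On the event $\{\max_{n\leq N}|S_n|\leq N^{1/2+\delta}\}$, whose complement has probability $\leq \exp(-N^\delta)$ and contributes negligibly to $\E[T_N^{2k}]$, I would prove by induction the key bound
$$|H_j(y)| \leq C^{2k-j}\prod_{i=j+1}^{2k} d_i^{d(\beta-1)/2}.$$
Summing over the simplex $\sum d_j \leq N$, using $\sum_{d=1}^N d^a \leq C N^{\max(0,1+a)}(\log N)^{O(1)}$ with $a = d(\beta-1)/2<0$, and accounting for the free variable $n_1\in [1,N]$, gives $\E[T_N^{2k}] \leq N^{1 + (2k-1)\max(0,1+a)} (\log N)^{O(k)}$. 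A direct case analysis confirms this is $\leq N^{2k\rho_d(\beta) + O(1)}$ in every regime, and the $O(1)$ becomes $O(1/k)$ after extracting the $(2k)$-th root, hence falls below $\eps$ for large $k$.

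The main technical obstacle is the inductive bound on $H_j$. The natural approach exploits that $H_{j+1}$, being an iterated Gaussian convolution, is Lipschitz at the scale $\sqrt{d_{j+2}}$, so that $(P^{d_{j+1}}(F \cdot H_{j+1}))(y) \approx H_{j+1}(y)(P^{d_{j+1}}F)(y)$, picking up the additional factor $d_{j+1}^{d(\beta-1)/2}$ from applying the kernel estimate to $F$. This scale-matching can fail when the $d_j$ are in an unfavorable order (specifically $d_{j+2} < d_{j+1}$), forcing either a reordering---using the symmetry of the ordered sum---or a case analysis that accepts a weaker bound for those terms but shows the sum still produces the claimed leading exponent. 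The parameter $\gamma(d,\beta,\eps)$ is chosen so that the kernel estimate remains applicable throughout the induction: for $d\geq 2$, any large $\gamma$ works; for $d=1$, the recurrent walk explores scales $\sqrt{n}$, so one needs $d_j\gtrsim n^{\beta/2}$ to keep the walk inside the admissible range $|x|\leq d_j^{\gamma/2}$ with $\gamma=2/\beta$, a threshold whose quantitative impact precisely reproduces $\rho_1(\beta) = (1+\beta)/2$.
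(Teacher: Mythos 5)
Your overall architecture (a $2k$-th moment bound plus Borel--Cantelli) is a legitimate alternative to the paper's route, which proves only a \emph{second} moment bound, but uniformly over starting points $|x_0|\le N^{1/2+\delta}$ (Proposition \ref{lemma:weak}), and converts it into almost sure convergence by a stopping-time maximal argument over dyadic blocks in \S\ref{SSRefl}. However, the two estimates on which your moment bound rests are not established, and the central one is false as stated. The inductive bound $|H_j(y)|\le C^{2k-j}\prod_{i>j}d_i^{d(\beta-1)/2}$ cannot hold: the summation by parts that produces a factor $d_i^{d(\beta-1)/2}$ also produces, through the product rule \eqref{productrule}, terms in which some of the $d$ discrete derivatives fall on the accumulated function $H_{j+1}$ rather than on the kernel. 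Already for $k=1$ the correct estimate is \eqref{KSK3} (two terms when $d=1$) and \eqref{ManyDer} ($d+1$ terms in general), and the extra terms are \emph{not} dominated by the pure product precisely when a later gap is much smaller than an earlier one; even for the second moment the paper needs a separate bootstrap (the unnumbered lemma following Lemma \ref{lemma:biggamma}, which restarts the walk at time $n_1-m_2$) to repair that regime. Your two suggested fixes do not close this: the gaps are not exchangeable (the iterated conditioning and the admissibility constraint $|x|\le L^{\gamma}$ are tied to the time order), so ``reordering by symmetry'' is unavailable, and the deferred ``case analysis'' is exactly the hard part of the proof. Moreover, your final bound $\E[T_N^{2k}]\lesssim N^{1+(2k-1)\max(0,\,1+d(\beta-1)/2)}$ is demonstrably false for $\beta\le(d-1)/d$: for a random scenery in $d=3$ (so $F\in\bG_\gamma^{1/2+\eps}$ a.s.) one has $\E[T_N^4]\ge(\E[T_N^2])^2\asymp N^2$, while your bound gives $N^{7/4+o(1)}$. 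So you must at least first reduce to $\beta>(d-1)/d$, as the paper does at the start of Section \ref{ScSpeed}; the claim that a ``direct case analysis confirms this in every regime'' cannot be correct.

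The kernel estimate $|(P^nF)(x)|\lesssim n^{d(\beta-1)/2}$ also does not follow from applying the $\bG_\gamma^\beta$ average ``directly'' to one cube of side $\sqrt n(\log n)^c$: the kernel is not constant on such a cube, so it cannot be pulled out of the sum, and if you subdivide into cubes of side $\delta\sqrt n$, the first-order oscillation error of the kernel times $\|F\|_\infty$ contributes an amount of order $\delta$, which cannot be made small relative to the target $n^{d(\beta-1)/2}$ simultaneously with keeping the main term $\delta^{d(\beta-1)}n^{d(\beta-1)/2}$ under control (try $d=1$, $\beta=1/2$). The estimate genuinely requires moving all $d$ derivatives onto the partial sums of $F$, i.e.\ summation by parts against the Edgeworth-expansion derivative bounds \eqref{disder}; this is the content of Lemma \ref{lem:KS} and its $d$-dimensional analogue, and it is also where the restriction $n\ge N^{1/\gamma}$ (hence the choice of $\gamma(d,\beta,\eps)$) enters. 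Finally, a small repair: interpolating along $N_j=2^j$ with $|T_{N+1}-T_N|\le\|F\|_\infty$ fails, since over a dyadic block $T_n$ can move by order $N_j\gg N_j^{\rho+\eps}$; this is harmless in your scheme, because once $k\eps>1$ the Markov bound is summable over all $N$ and no subsequence is needed, but as written the step is wrong.
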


\begin{corollary}
\label{cor:gammainf}
If $d=1$ and $F\in \bG_{2/\beta}^\beta$ or $d\geq 2$ and $F\in \bG_\infty^\beta$ then
with probability 1,
for all $\eps$
$$ \lim_{N\to\infty} \frac{T_N}{N^{\rho_d(\beta)+\eps}}=0. $$
\end{corollary}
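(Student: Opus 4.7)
The plan is to reduce Corollary \ref{cor:gammainf} directly to Theorem \ref{ThGGammaBeta}, since the function spaces $\bG_\infty^\beta$ and $\bG_{2/\beta}^\beta$ appearing in the corollary are essentially tailored so that the membership hypothesis of the theorem, $F \in \bG_{\gamma(d,\beta,\eps)}^\beta$, is automatic for every choice of $\eps$. As in the theorem, I take the implicit normalization $\brF = 0$ (if not, replace $F$ by $F - \brF$, which lies in the same space and leaves $T_N$ modified by a deterministic $N\brF$ that is absorbed into the renormalization only if one had $\brF=0$ to begin with; since the corollary asserts a limit equal to $0$, centering is forced).

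First I would verify, for each fixed $\eps > 0$, the hypothesis $F \in \bG_{\gamma(d,\beta,\eps)}^\beta$. When $d \geq 2$, this requires $F \in \bG_{1/\eps}^\beta$, which is immediate from $F \in \bG_\infty^\beta = \bigcap_{\gamma>0}\bG_\gamma^\beta$. When $d=1$, the requirement is $F \in \bG_{2/\beta}^\beta$, which is the hypothesis itself. In either case, Theorem \ref{ThGGammaBeta} yields
$$
\frac{T_N}{N^{\rho_d(\beta)+\eps}} \longrightarrow 0 \quad \text{almost surely.}
$$

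The only subtle point is that the corollary asserts more: on a single full-measure event, the above convergence holds simultaneously for \emph{every} $\eps > 0$. I would handle this by a standard countable-intersection argument. For each positive integer $k$, Theorem \ref{ThGGammaBeta} (applied with $\eps = 1/k$) gives a full-measure event $\Omega_k$ on which $T_N / N^{\rho_d(\beta)+1/k} \to 0$. Set $\Omega_\infty = \bigcap_{k \geq 1} \Omega_k$, which still has full measure. On $\Omega_\infty$, fix any $\eps > 0$ and choose $k$ so that $1/k < \eps$; then
$$
\frac{T_N}{N^{\rho_d(\beta)+\eps}} \;=\; \frac{T_N}{N^{\rho_d(\beta)+1/k}} \cdot N^{1/k - \eps},
$$
and the first factor tends to $0$ while $N^{1/k - \eps}$ is bounded (in fact tends to $0$). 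Hence the product tends to $0$, proving the corollary.

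The main obstacle is essentially non-existent: once Theorem \ref{ThGGammaBeta} is in hand, the corollary is only a matter of unpacking the definitions of $\bG_\infty^\beta$ and $\bG_{2/\beta}^\beta$ and performing the standard quantifier swap above. All substantive work has been done in the theorem itself.
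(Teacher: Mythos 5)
Your proposal is correct and is exactly the intended deduction: the paper states Corollary \ref{cor:gammainf} as an immediate consequence of Theorem \ref{ThGGammaBeta} (with $\brF=0$ understood, as in the theorem), obtained by applying the theorem with $\eps=1/k$ and intersecting the countably many full-measure events. Your quantifier-swap argument via $N^{1/k-\eps}$ is the standard way to make this explicit, so nothing further is needed.
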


\begin{remark}
Let us discuss two special cases.

(A) {\em (Random walk in random scenery)}
If $F(x)$, $x \in \integers^{d}$ are bounded and iid with expectation $0$, 
then by moderate deviation estimates, almost surely for every $\gamma < \infty$ and for
every $\eps >0$, $F \in \bG_{\gamma}^{\frac12 + \eps}$  and $\brF = 0.$ 

Now assuming that the random walker has zero expectation and finite moments of every order, 
Theorem
\ref{ThGGammaBeta} 
implies 
$\frac{T_N}{N^{\frac34 + \eps }}\to 0$ almost surely
in dimension $d=1$. Note that in this case,
$\frac{T_N}{N^{\frac34 }}$ has a non-trivial weak limit by \cite{KS79}.
If $d\geq 2$, Theorem \ref{ThGGammaBeta} gives
 $\frac{T_N}{N^{\frac12 + \eps }}\to 0$ almost surely 
while $\frac{T_N}{N^{\frac12 }}$ 
 ($\frac{T_N}{\sqrt{N\ln N}}$ if $d=2$) has a non-trivial weak limit
(\cite{KS79, B79, B89}). We note that Theorem \ref{ThGGammaBeta} is not new for 
$F$ as above (see \cite{KL98, GP01}) however, we would like to emphasize that
our space $\bG_\gamma^{\frac{1}{2}+\eps}$ includes many more functions than just realizations
of iid process, so both the result and the proof of Theorem \ref{ThGGammaBeta}
are new even for $\bG_\gamma^{\frac{1}{2}+\eps}.$

(B) If $F(x)$ is periodic, $\brF = 0$, then $F \in 
 \bG_\infty^{\frac{d-1}{d}}$. 
Thus, assuming that  the random walker has zero expectation and finite moments of every order,
Corollary
\ref{cor:gammainf} implies
\begin{equation}
\label{AlmDif}
\frac{T_N}{N^{\frac12 + \eps }}\to 0 
\end{equation}
almost surely for all $d$.
Note that by the central limit theorem for finite Markov chains, 
 $\frac{T_N}{\sqrt{N}}$ 
has a Gaussian weak limit.

In fact, our results also give \eqref{AlmDif} for quasi-periodic observables. That is, given 
$d \in \N$ and
a $C^\infty$ function $\mathfrak{F}:\mathbb T^d\to\R,$
let $\hat{\mathfrak F} :\reals^d \to \reals$ be the $[0,1]^d$-periodic extension of $\mathfrak F$. Furthermore,
given
 $d$ vectors $\alpha_{(1)}, \dots \alpha_{(d)}\in \R^d$ 
and an
initial phase $\omega\in [0,1]^d,$ let
$$ 
F(x)= \hat{\mathfrak{F}}\left(\omega+\sum_{j=1}^d x_j \alpha_{(j)} \right)$$
where $(x_1, \dots x_d)$ are coordinates of vector $x\in \Z^d.$
We say that a vector $\alpha \in \integers^d$ is Diophantine, if 
there are constants $K$ and $\sigma$ such that
for each $m\in \Z^d$,
$$ \left|e^{2\pi \langle m, \alpha \rangle }-1\right|\geq \frac{K}{|m|^\sigma}.$$
If $\alpha_{(j)}$ is Diophantine for all $j=1,...,d$,
then $F \in \bG_{\infty}^0$
(see e.g. \cite[\S 2.9]{KH95}) so \eqref{AlmDif} holds.

Thus in both cases (A) and (B) our results give an optimal exponent for the growth rate of $T_N.$
\end{remark}

\begin{remark}
Periodic (and quasi-periodic) observables are special case of stationary ergodic observables.
More precisely, 
let $\mathfrak{T}_1, \dots \mathfrak{T}_d$ be 
commuting measurable maps of a space $\Omega$ preserving a probability measure
$\nu.$ Given a bounded measurable function $\mathfrak{F}$ on $\Omega$ and an initial condition
$\omega\in \Omega$, define
\begin{equation}
\label{StErg}
F_\omega(k)=\mathfrak{F}(\mathfrak{T}^k \omega),
\end{equation}
where for $k=(k_1,\dots k_d)\in \Z^d$ we let 
$\mathfrak{T}^k=\mathfrak{T}_d^{k_d}\dots \mathfrak{T}_1^{k_1}.$ If the family $\mathfrak{T}^k$ is ergodic, 
then the ergodic theorem tells us that for almost all $\omega$, $F_\omega\in \bG_0$ and 
$\bar{F}=\nu(\mathfrak{F}).$ For the observables given by \eqref{StErg} the strong law of large numbers
for almost every $\omega$ follows from ergodicity of the environment viewed by the particle process (\cite{BS}).
Theorem \ref{ThWLLN}
only gives a weak law of large numbers, (except in dimension 1 in the ballistic case,
see Theorem \ref{ThD=1} below). On the other hand our result gives valuable additional information
even for stationary ergodic environments. Namely, the set of full measure where the weak law of large
numbers holds contains all environments where ergodic averages of $\mathfrak{F}$ exist. We also
note that Theorem \ref{ThGGammaBeta}  provides new and non-trivial information even in the stationary ergodic case.
\end{remark}

\begin{theorem}
\label{ThD=1} 
Suppose that $d=1,$ $v=\E(X_1)>0$ and  for all $t\geq 1$, $\Prob(|X_1|>t)\leq C/t^\beta$ for some 
$C>0$ and $\beta>1.$
If $F\in \bG_+$,  then $\frac{T_N}{N}\to\brF_+$ almost  
surely.
\end{theorem}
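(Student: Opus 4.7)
Plan: The plan is to combine the SLLN for the walker with a ladder-epoch decomposition of the trajectory, reducing the claim to a renewal-type ergodic average along the ladder sites. Since $\beta>1$ ensures $\E|X_1|<\infty$, the classical SLLN yields $S_n/n\to v>0$ almost surely, so $S_n\to+\infty$. Replacing $F$ by $F-\brF_+$, we may assume $\brF_+=0$ and show $T_N/N\to 0$ a.s. Introduce the strict ascending ladder epochs $\tau_0=0<\tau_1<\tau_2<\cdots$, the ladder heights $Y_j=S_{\tau_j}-S_{\tau_{j-1}}$, the excursion durations $H_j=\tau_j-\tau_{j-1}$, and the non-positive excursion paths $W_j(m)=S_{\tau_{j-1}+m}-S_{\tau_{j-1}}$ for $0\le m<H_j$. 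These are i.i.d.\ in $j$; Wald's identity gives $\E Y_1=v\,\E H_1<\infty$, and standard fluctuation theory transfers the polynomial tail bound on $X_1$ to a polynomial tail (with some exponent $>1$) on $H_1$.

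Let $J=J(N)=\max\{j:\tau_j\le N\}$ and $C_j=\sum_{m=0}^{H_j-1}F(S_{\tau_{j-1}}+W_j(m))$, so $T_N=\sum_{j=1}^J C_j+R_N$ with $|R_N|\le\|F\|_\infty H_{J+1}$. The SLLN for i.i.d.\ $H_j$'s gives $J/N\to 1/\E H_1$ a.s., and Borel--Cantelli (using the tail of $H_1$) yields $R_N/N\to 0$ a.s. Decompose $C_j=g(S_{\tau_{j-1}})+D_j$, where
$$
g(s)=\E[C_1\mid S_{\tau_0}=s]=\sum_{k\le 0}q(k)\,F(s+k),\qquad q(k)=\E\bigl[\#\{0\le m<H_1:W_1(m)=k\}\bigr],
$$
so $q$ is a non-negative measure on $\Z_{\le 0}$ of total mass $\E H_1$. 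A short Fubini argument shows $g\in\bG_+$ with $\brg_+=\brF_+\cdot \E H_1=0$. The $D_j$'s form a martingale difference sequence in the excursion filtration with $|D_j|\lesssim H_j+\E H_1$; a martingale Marcinkiewicz--Zygmund SLLN using the moment bound on $H_1$ gives $J^{-1}\sum_{j=1}^J D_j\to 0$ almost surely.

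The decisive step is the a.s.\ convergence $J^{-1}\sum_{j=1}^J g(S_{\tau_{j-1}})\to 0$. The ladder sites $(S_{\tau_j})$ form a renewal process with aperiodic i.i.d.\ positive increments $Y_j$; by Blackwell's renewal theorem each integer is hit as a ladder site with asymptotic density $1/\E Y_1$. Combined with the Cesaro condition $L^{-1}\sum_{k=0}^L g(k)\to 0$ (encoded by $\brg_+=0$), this yields the desired average by partitioning $[0,\E Y_1\cdot J]$ into mesoscopic blocks on each of which $\bar g$ is already close to zero, while the empirical density of renewal visits per block is close to $1/\E Y_1$ with summable block-level fluctuations. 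Assembling,
$$
\frac{T_N}{N}=\frac{J}{N}\left(\frac{1}{J}\sum_{j=1}^J g(S_{\tau_{j-1}})+\frac{1}{J}\sum_{j=1}^J D_j\right)+\frac{R_N}{N}\longrightarrow 0 \quad\text{a.s.}
$$

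The main obstacle is the renewal-ergodic step just described: turning the purely Cesaro hypothesis $\brg_+=0$ into a quenched statement along the random sequence of ladder sites. The aperiodicity hypothesis is essential, as it makes the ladder heights aperiodic and hence forces the ladder sites to equidistribute on $\Z_{\ge 0}$ at density $1/\E Y_1$. The exponent $\beta>1$ plays two roles: it supplies $\E|X_1|<\infty$ (for the SLLN and Wald's identity) and provides moment bounds on $H_1$ sufficient both for the martingale SLLN on $\sum D_j$ and for the Borel--Cantelli control of $R_N$.
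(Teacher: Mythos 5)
Your decomposition is a genuinely different route from the paper's (the paper reduces $T_N$ to the occupation-time sum $\tT_N=\sum_{x=1}^N\ell_\infty(x)F(x)$ and proves polynomial decay of $\Cov(\ell_\infty(n_1),\ell_\infty(n_2))$ via a quantitative renewal theorem and a three-state chain), and most of your skeleton is sound: $\E H_1<\infty$ and Wald give $J/N\to 1/\E H_1$; moments of the first ladder epoch are indeed controlled by moments of $X_1^-$, so $\E H_1^p<\infty$ for some $p\in(1,\beta)$ and the martingale SLLN for $\sum_j D_j$ and the bound on $R_N$ go through; the Fubini computation showing $g\in\bG_+$ with $\bar g_+=0$ is correct.

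However, there is a genuine gap at exactly the step you call decisive: the almost sure convergence $J^{-1}\sum_{j\le J}g(S_{\tau_{j-1}})\to 0$. This statement is essentially equivalent to the theorem itself, and the justification offered does not carry it. Blackwell's renewal theorem is a statement about expectations, namely $\P(m\ \text{is a ladder point})\to 1/\E Y_1$; it gives no almost-sure control of the number of ladder points per mesoscopic block along a single trajectory, and ``summable block-level fluctuations'' is asserted rather than proved (note also that for $\beta<2$ the ladder heights may have infinite variance, so even the variance of block counts needs care). Since $g$ is only Cesàro-averaging — a global observable, not a sample of a stationary process — no Birkhoff or standard weighted ergodic theorem applies; Theorem \ref{ThOcean} of the paper shows that equidistribution in the mean is in general not enough for an almost sure statement, so a quantitative second-moment input is unavoidable. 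Concretely, with $u(m)=1(m\ \text{is a ladder point})$ and $u^*(m)=\P(u(m)=1)$, the renewal property gives $\Cov(u(m_1),u(m_2))=u^*(m_1)\left(u^*(m_2-m_1)-u^*(m_2)\right)$, and Rogozin's quantitative renewal theorem \cite{Rog73} (available here because $\E Y_1^p<\infty$ for some $p>1$) yields $|u^*(m)-1/\E Y_1|\le C m^{-\eps_1}$, hence polynomial covariance decay; Chebyshev plus Borel--Cantelli along a subsequence $J=n^r$, together with a gap-filling argument using boundedness of $g$, then closes the step. This is, in different clothing, precisely the content of the paper's Lemma \ref{lem1}, Lemma \ref{LmLTCov} and the ensuing subsequence argument; if you supply that estimate, your ladder-point version is arguably a little cleaner than the paper's (no three-state chain is needed because the ladder points form a genuine renewal process), but as written the key estimate is missing.
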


The next theorem shows that
in general the
strong law of large numbers fails in $\bG_0.$

\begin{theorem}
\label{ThOcean}
Suppose that 
$S_N$ is in the normal domain of attraction of the stable law of some index $\alpha.$
Moreover assume that one of the following assumptions is satisfied

(a) $\alpha>1$ and $\E(X_1)=0;$ or

(b) $\alpha<1.$

Then there exists $F\in \bG_0$ such that, with probability 1, $\frac{T_N}{N}$ does not converge
as $N\to\infty.$
\end{theorem}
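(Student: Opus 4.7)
The plan is to construct a deterministic ``ocean'' function $F\in\bG_0$ with $\brF=0$ whose Birkhoff averages $T_N/N$ oscillate persistently. The driving principle is that membership in $\bG_0$ only controls the \emph{uniform} cube averages of $F$ (with no required rate), whereas almost-sure convergence of $T_N/N$ would require sufficiently fast convergence of the walker-weighted averages of $F$, which are computed against the (non-uniform) local-time density of the walk. By exploiting the gap between uniform and walker-weighted averaging, one can build $F\in\bG_0$ that causes $T_N/N$ to miss almost-sure convergence.

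Fix a rapidly growing sequence of radii $R_k$ (for instance $R_{k+1}=R_k^{100}$) and set $N_k=R_k^{\alpha}$, so by the stable limit theorem $|S_{N_k}|\asymp R_k$ with non-negligible probability. Define cubic shells $B_k=\{x\in\Z^d:R_k\le\|x\|_\infty<2R_k\}$, split into an inner half $B_k^{\mathrm{in}}=\{R_k\le\|x\|_\infty<\tfrac{3R_k}{2}\}$ and an outer half $B_k^{\mathrm{out}}$ of equal cardinality. As a first attempt, put $F=(-1)^k$ on $B_k^{\mathrm{in}}$, $F=-(-1)^k$ on $B_k^{\mathrm{out}}$, and $F=0$ elsewhere. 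Then $\sum_{B_k}F=0$, so cubes that entirely contain $B_k$ pick up no contribution; because $R_{k+1}/R_k\to\infty$, for any cube $V$ of side $L$ at most one shell $B_k$ partially intersects $\partial V$. This partial contribution is $O(R_k^d)=O(L^d)$ in the worst case, which only gives $|\brF_V|=O(1)$, not enough for $\bG_0$. The remedy is to further subdivide $B_k^{\mathrm{in}}$ and $B_k^{\mathrm{out}}$ at an intermediate scale $R_k^{\beta}$ ($\beta<1$) with a balanced $\pm 1$ pattern that preserves the coarse inner/outer dichotomy but cancels on all sub-cubes; then the partial contribution drops to $o(L^d)$ and $F\in\bG_0$.

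For the oscillation, compute $\E[T_{N_k}/N_k]$ using that the walker's expected occupation density is $\E[\ell_{N_k}(x)]\asymp N_k^{1-d/\alpha}\rho_{\alpha}(x/R_k)$, where $\rho_{\alpha}$ denotes the density of the occupation time of the limiting $\alpha$-stable process. Because $\rho_{\alpha}$ is strictly decreasing in $\|y\|_\infty$ on $\{1\le\|y\|_\infty<2\}$ (unimodal stable density), we obtain
\[
\E[T_{N_k}/N_k]\;\longrightarrow\;(-1)^k\, c,\qquad c:=\int_{\{1\le\|y\|_\infty<3/2\}}\rho_\alpha-\int_{\{3/2\le\|y\|_\infty<2\}}\rho_\alpha\;>\;0,
\]
with the sign alternating in $k$; the sub-refinement at scale $R_k^{\beta}$ does not spoil this because $\rho_\alpha(\cdot/R_k)$ is essentially constant on cells of side $R_k^{\beta}$. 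A variance estimate $\Var(T_{N_k}/N_k)=o(1)$ (using mixing of $F(S_n)$ along the walk and the bounded supports of the $\phi_k$'s) combined with the Borel--Cantelli lemma applied along the sparse subsequence $\{N_k\}$ then upgrades the expectation to an almost-sure statement: $T_{N_k}/N_k$ is within $o(1)$ of $(-1)^k c$ for all large $k$, so $\limsup T_N/N\ge c>-c\ge\liminf T_N/N$ a.s.

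The main obstacle is calibrating the subdivision exponent $\beta$: it must be small enough that cube averages of $F$ vanish uniformly over arbitrary cube shapes (so that $F\in\bG_0$), yet large enough that the walker's slowly-varying occupation density $\rho_\alpha(\cdot/R_k)$ still resolves the inner/outer asymmetry of the refined $\phi_k$. This interplay is feasible precisely because $\bG_0$ imposes no rate on $\brF_V\to 0$. Case (b) ($\alpha<1$) follows the same outline, with $\rho_\alpha$ replaced by the stable occupation density for the transient ``staircase'' regime and the observation that the walker spends a positive fraction of time on the scale $R_k$ in the interval $[N_k,N_{k+1}]$; case (a) relies on the recurrence/null-recurrence of the centered walk and the corresponding occupation-time density of the stable limit process.
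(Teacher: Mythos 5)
Your plan cannot work as stated, because it is in direct conflict with the weak law of large numbers that holds for these very walks. In case (a), Theorem \ref{thm:lgm} and the proof of Theorem \ref{ThWLLN} give $\E(F(S_n))\to\brF$ for \emph{every} $F\in\bG_0$, hence $\E(T_N)/N\to\brF$; the same partition argument (cubes of side $\delta N^{1/\alpha}$ together with the local limit theorem) applies for $\alpha<1$ as well. So for a genuine member of $\bG_0$ with $\brF=0$ the quantity $\E(T_{N_k}/N_k)$ cannot tend to $(-1)^k c$ with $c>0$. Your own construction shows exactly where this bites: to get $F\in\bG_0$ you require the refined $\pm1$ pattern to cancel on every sub-cube of side $R_k^{\beta}$, and you note that the occupation density $\rho_\alpha(\cdot/R_k)$ is essentially constant on such cells --- but then $\sum_x \E(\ell_{N_k}(x))F(x)$ cancels cell by cell, so $\E(T_{N_k})/N_k\to 0$, not $(-1)^k c$. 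Conversely, if the refinement preserves the inner/outer dichotomy strongly enough to make $c>0$, then the averages over boxes of the form $V(L,\tfrac32 L)$ (take $L=R_k$) fail to converge and $F\notin\bG_0$. The ``calibration of $\beta$'' you postpone is therefore not a technical gap but an impossibility; there is no exploitable gap between uniform cube averages and averages against a density that is smooth on the scale of the cubes, and this is precisely the content of Theorem \ref{ThWLLN}. A secondary problem: even granting your mean computation, $\Var(T_{N_k}/N_k)=o(1)$ only gives deviation probabilities $o(1)$, which are not summable, so Borel--Cantelli does not upgrade the statement to an almost sure one without a quantitative rate.

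Because $T_N/N\to\brF$ in probability for all $F\in\bG_0$, any counterexample to the strong law must exploit \emph{rare} events occurring at random times, each of asymptotically vanishing probability, rather than a drift of the mean. This is what the paper does: it takes stripes $[b_n,b_{n+1})$ of width $b_n/a_n=o(b_n)$ on which $F$ is alternately $1$ and $0$ (so $F\in\bG_0$ with $\brF=1/2$), and shows that with probability one the walk is trapped, infinitely often, inside a single stripe during an entire time window $[t_n,3t_n]$ with $t_n\asymp c_n^{\alpha}$. The trapping probabilities tend to zero, but only like $1/k$ along a very sparse subsequence $n_k$ (this is Lemma \ref{lem:stableprocess}, with $a_n\to\infty$ chosen extremely slowly), and a conditional Borel--Cantelli argument then yields infinitely many such events almost surely. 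On these events $T_{3t_n}/(3t_n)$ is forced to be at least $2/3$ or at most $1/3$ according to the parity of the stripe, which destroys almost sure convergence while remaining invisible to the weak law. If you want to salvage your shell picture, you would have to replace the mean-oscillation mechanism by some such localization mechanism.
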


\begin{remark}
The same conclusion holds in case (a) even if $\E(X_1)\neq 0.$ However, in this case $\bG_0$ is not an appropriate space to look at since we even do not have a weak law of large numbers in $\bG_0.$
\end{remark}

\section{Weak convergence.}
\label{ScWeak}
Here we prove Theorems \ref{ThWLLN} and \ref{ThArc}.

\subsection{Preliminaries}

First, we recall two useful results.

\begin{theorem}(\cite[Section 50]{GK54})
\label{thm:llt}
Under the assumptions of Theorem \ref{ThWLLN}, 
$S_n$ satisfies the {\it local limit theorem}, i.e. there is a continuous probability density $g$ such that
$$
\lim_{n \to \infty}
\sup_{l \in \mathbb Z^d} | n^{d/\alpha} \mathbb P (S_n = l) - g(l / n^{1/\alpha}) | = 0.
$$
\end{theorem}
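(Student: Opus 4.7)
The plan is to prove Theorem \ref{thm:llt} by Fourier inversion on the torus. Since $S_n$ takes values in $\Z^d$, we have the exact formula
$$\Prob(S_n=l)=(2\pi)^{-d}\int_{[-\pi,\pi]^d}\phi(t)^n e^{-i\langle t,l\rangle}\,dt,$$
where $\phi(t)=\EXP[e^{i\langle t,X_1\rangle}]$. After the substitution $t=s/n^{1/\alpha}$,
$$n^{d/\alpha}\Prob(S_n=l)=(2\pi)^{-d}\int_{n^{1/\alpha}[-\pi,\pi]^d}\phi\bigl(s/n^{1/\alpha}\bigr)^n e^{-i\langle s,l/n^{1/\alpha}\rangle}\,ds.$$
On the other hand, writing $\psi$ for the characteristic function of the stable limit $Y$ and noting that $\psi$ is integrable (stable densities decay like $|s|^{-(d+\alpha)}$ or faster), Fourier inversion represents $g$ as
$$g(x)=(2\pi)^{-d}\int_{\R^d}\psi(s)e^{-i\langle s,x\rangle}\,ds.$$
It therefore suffices to estimate the difference of the two integrands in $L^1(ds)$ uniformly in $x=l/n^{1/\alpha}$.

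I would split the $s$-integration into three regions, using parameters $A$ large and $\delta$ small. On the central region $|s|\le A$, convergence $S_n/n^{1/\alpha}\Rightarrow Y$ gives $\phi(s/n^{1/\alpha})^n\to\psi(s)$ uniformly on compacta, so this part of the integral converges to its counterpart for $g$ by dominated convergence. On the intermediate region $A<|s|\le \delta n^{1/\alpha}$, I would use the expansion of $\log\phi$ near the origin implied by the normal domain of attraction assumption: for $\delta$ small enough, $\mathrm{Re}\,\log\phi(s/n^{1/\alpha})^n\le -c|s|^\alpha$, hence $|\phi(s/n^{1/\alpha})|^n\le e^{-c|s|^\alpha}$. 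Choosing $A$ large makes the tail $\int_{|s|>A}e^{-c|s|^\alpha}ds$ arbitrarily small, and $\int_{|s|>A}|\psi(s)|ds$ is small for the same reason. On the outer region $\delta n^{1/\alpha}<|s|$, i.e.\ $|t|>\delta$ in the original variables, non-degeneracy and aperiodicity guarantee $|\phi(t)|<1$ on $[-\pi,\pi]^d\setminus\{0\}$; by continuity and compactness there exists $c_\delta>0$ with $|\phi(t)|\le 1-c_\delta$ on the corresponding compact set, so this part is bounded by $(2\pi n^{1/\alpha})^d(1-c_\delta)^n$, which is negligible.

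The main obstacle is the intermediate region, where I need a quantitative bound on $\log\phi(t)$ of the form $\log\phi(t)=-|t|^{\alpha}h(t/|t|)+o(|t|^\alpha)$ as $t\to 0$, with remainder uniform in direction, and then to check that this local expansion yields a uniform-in-$n$ Gaussian-type (more precisely, stable-type) tail bound on the whole range $|s|\le\delta n^{1/\alpha}$. This is precisely where the assumption of \emph{normal} (rather than merely general) domain of attraction is used, since it gives the correct scaling without slowly varying corrections. Once the three regions are controlled, taking $A\to\infty$ after $n\to\infty$ and then $\delta\to 0$ yields the claimed uniform convergence; continuity of $g$ follows from integrability of $\psi$ together with Fourier inversion.
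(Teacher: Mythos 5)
The paper does not prove this statement at all: it is quoted as a known result, with a citation to \cite[Section 50]{GK54} (for $d\ge 2$ the relevant classical reference is actually Rva\v{c}eva's multidimensional extension of Gnedenko's theorem), so there is no internal proof to compare against. Your proposal is precisely the standard characteristic-function proof that underlies that citation: exact Fourier inversion on $[-\pi,\pi]^d$, rescaling $t=s/n^{1/\alpha}$, and the three-region split with (i) convergence of $\phi(s/n^{1/\alpha})^n\to\psi(s)$ uniformly on compacta from the stable limit theorem, (ii) a stable-type tail bound on $A<|s|\le\delta n^{1/\alpha}$, and (iii) the bound $\sup_{\delta\le|t|\le\pi\sqrt d}|\phi(t)|<1$, which does follow from the paper's non-degeneracy and aperiodicity assumptions (if $|\phi(t_0)|=1$ then $\langle t_0,X_1\rangle$ is a.s.\ constant mod $2\pi$; aperiodicity forces that constant into $2\pi\Z$, and non-degeneracy then forces $t_0\in 2\pi\Z^d$, impossible in $[-\pi,\pi]^d\setminus\{0\}$). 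The outline is correct; the one genuinely nontrivial step is the one you flag yourself, namely the uniform-in-direction estimate $|\phi(t)|\le e^{-c|t|^{\alpha}}$ for $|t|\le\delta$, which in a complete write-up requires the normal domain of attraction hypothesis together with fullness of $Y$ (e.g.\ via the symmetrized variable and truncated-moment estimates); as written this is an announced lemma rather than a proof. One small slip: the integrability of $\psi$ should be justified by the bound $|\psi(s)|=e^{-c|s|^{\alpha}}$ valid for a full stable law (decay of the characteristic function), not by the polynomial decay $|x|^{-(d+\alpha)}$ of the stable \emph{density}, which is a statement in the dual variable and is irrelevant here. With that estimate supplied (or cited), your argument is a complete and correct proof of the theorem, and continuity of $g$ indeed comes for free from $\psi\in L^1$.
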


\begin{theorem}[Local global mixing, \cite{DN18}]
\label{thm:lgm}
Under the assumptions of Theorem \ref{ThWLLN}, $S_n$ is {\it local global mixing}, i.e.
$$
\lim_{n \to \infty} \EXP(F(S_n)) = \brF.
$$
\end{theorem}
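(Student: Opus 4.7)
The plan is to combine the local limit theorem (Theorem~\ref{thm:llt}) with the defining property of $\bG_0$, via a Riemann sum approximation on a uniform grid of sub-boxes covering the bulk of the walk's distribution. First, I would cut off the tails: since $S_n/n^{1/\alpha}\Rightarrow Y$ where $Y$ has continuous density $g$, for any $\eps>0$ there exists $K=K(\eps)$ with $\int_{|y|>K}g(y)\,dy<\eps$, and by weak convergence $\Prob(|S_n|>Kn^{1/\alpha})<2\eps$ for all sufficiently large $n$. Boundedness of $F$ then lets me discard $|l|>Kn^{1/\alpha}$ at cost $O(\eps\|F\|_\infty)$, reducing the task to estimating the bulk sum.

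Next, fix a large integer $M$ and partition the bulk $\{|l|\leq Kn^{1/\alpha}\}\cap\integers^d$ into $(2M)^d$ sub-boxes of the form
\[
B_i \;=\; V\!\left(a_1^{(i)}L_n,\,b_1^{(i)}L_n,\,\ldots,\,a_d^{(i)}L_n,\,b_d^{(i)}L_n\right),\qquad L_n := \lfloor Kn^{1/\alpha}/M\rfloor,
\]
where the integer tuples $(a_j^{(i)},b_j^{(i)})$ depend only on $i$ and not on $n$. For each fixed tuple, the definition of $\bG_0$ yields $\brF_{B_i}\to\brF$ as $L_n\to\infty$; since only $(2M)^d$ tuples occur, the convergence is uniform in $i$. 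Simultaneously, Theorem~\ref{thm:llt} combined with uniform continuity of $g$ on $[-K,K]^d$ gives
\[
\Prob(S_n=l) \;=\; \frac{g(c_i/n^{1/\alpha})+\delta_{n,M,i}}{n^{d/\alpha}},\qquad l\in B_i,
\]
with $c_i$ the center of $B_i$ and $\max_i|\delta_{n,M,i}|\to 0$ as first $n\to\infty$ and then $M\to\infty$.

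Multiplying and summing over $B_i$ yields
\[
\sum_{|l|\leq Kn^{1/\alpha}}F(l)\,\Prob(S_n=l) \;=\; (\brF+o(1))\sum_i g(c_i/n^{1/\alpha})\left(\frac{L_n}{n^{1/\alpha}}\right)^d + o_{n,M}(1),
\]
whose Riemann sum converges to $\int_{|y|\leq K}g(y)\,dy = 1 - O(\eps)$; together with the tail estimate and sending $n\to\infty$, then $M\to\infty$, then $\eps\to 0$, this gives $\EXP(F(S_n))\to\brF$. The only real subtlety I expect is coordinating three error sources—the LLT error (shrinks as $n\to\infty$), the oscillation-of-$g$ and Riemann-sum errors (shrink as $M\to\infty$), and the $\bG_0$ average error (requires $L_n\to\infty$)—which is handled by a diagonal choice $M=M(n)\to\infty$ growing slowly enough that $L_n\to\infty$. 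The key structural point that makes this work is that for each fixed $M$ the family of box shapes $(a_j^{(i)},b_j^{(i)})$ is finite, so the pointwise $\bG_0$ hypothesis automatically upgrades to uniform convergence over the partition.
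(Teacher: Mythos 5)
Your argument is correct: the tail cut-off at $Kn^{1/\alpha}$, the partition into $(2M)^d$ boxes of side $\asymp n^{1/\alpha}/M$ whose shapes come from a fixed finite family of tuples (so the pointwise $\bG_0$ convergence upgrades to uniformity over the partition for free), the LLT combined with uniform continuity of $g$ on a compact set, and the iterated-limit (or slowly growing $M(n)$) bookkeeping all go through as you describe. Note that the paper does not prove this statement itself—it is quoted from \cite{DN18}—but your proof is exactly the scheme the authors deploy inside the proof of Theorem \ref{ThWLLN} (cubes $B_k$ of side $\delta N^{1/\alpha}$, control of the oscillation of $g$, and the $\bG_0$ box-average estimate \eqref{eq:weak1}), so it is the intended argument rather than a genuinely different route.
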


\subsection{Proof of Theorem \ref{ThWLLN}}

Replacing $F$ by $F - \brF$, we can assume that $\brF = 0$.
By Theorem \ref{thm:lgm}, we have $\DS \lim_{N \to \infty} \frac{\E(T_N)}{N} = 0$. 
Thus in order to prove Theorem
\ref{ThWLLN}, it suffices to verify that $\DS \lim_{N \to \infty} \frac{\E(T^2_N)}{N^2} = 0$. 
Let us fix some $\eps >0$ and prove that $\E(T^2_N) < \eps N^2$ for all sufficiently large $N$.
We have
$$
\E(T^2_N) = 2
\sum_{0\leq n_1 < n_2\leq N} \E (F(S_{n_1}) F(S_{n_2}))
+\sum_{n=1}^N \E( F^2(S_n)).
$$
Now writing $\eps_1 = \frac{\eps}{50 \| F\|^2_{\infty}}$, we have
$$
\E(T^2_N) \leq  \frac{\eps}{10} N^2 +
\left|2 \sum_{\eps_1N < n_1 < n_1+\eps_1N < n_2\leq N} \E (F(S_{n_1})F(S_{n_2})) 
\right|.$$
Choose a constant $K$ such that 
$\P (|S_N| > K N^{1/\alpha} /2) < \frac{\eps}{ 20\| F\|^2_{\infty}}$ 
for all sufficiently large $N.$ Thus we have
$$
\E(T^2_N) \leq  \frac{2\eps}{10} N^2 +
\left| 2\sum_{\eps_1N < n_1 < n_1+\eps_1N < n_2\leq N} 
\E (1_{\{|S_{n_1}|, |S_{n_2}| <KN^{1/\alpha}\}}F(S_{n_1})F(S_{n_2}))\right|. $$

By the Markov property, we have
$$
|\E (1_{\{|S_{n_1}|, |S_{n_2}| <KN^{1/\alpha}\}}F(S_{n_1})F(S_{n_2}))| \leq
$$
$$
\sum_{
|x| < KN^{1/\alpha}}
\| F \|_{\infty} \Prob (S_{n_1} = x)
|\E (1_{\{|S_{n_2-n_1}-x| <KN^{1/\alpha}\}}F(S_{n_2-n_1} -x))| .
$$
Thus it is sufficient to prove that for every $x$ with $|x| < KN^{1/\alpha}$,
\begin{equation}
\label{eq:towerrule}
|\E (1_{\{|S_{n_2-n_1}-x| <KN^{1/\alpha}\}}F(S_{n_2-n_1} -x))|  
\leq \frac{\eps}{5 \| F \|_{\infty}}.
\end{equation}

Recall that $g$ is the density function of the limiting distribution of $S_n / n^{1/\alpha}$ as in Theorem 
\ref{thm:llt}.
Now we choose $\delta$ so that the oscillation of $g$ on any cube
of side length $ \delta/\eps_1^{1/\alpha}$ 
within distance 
$2K / \eps_1^{1/\alpha}$ from the origin
 is less than 
\begin{equation}
\label{DefEta}
 \eta := \frac{\eps \eps_1^{d/\alpha}}{20 K^d \| F\|_{\infty}^2}.
\end{equation}
 
Let us partition the box $[-K,K]^d \subset \reals^d$ into boxes 
of side length $\delta$. Denote these boxes by $B_k, k=1,...,\brk$ and let 
$B_{k,M} = M B_k \cap \integers^d$ for positive integers $M$.
Since, $F\in \bG_0$ we obtain from the definition of $\bG_0$ that
\begin{equation}
\label{eq:mesobox}
\lim_{M \to \infty} \brF_{B_{k,M}} = 0.
\end{equation}
Note that the convergence in \eqref{eq:mesobox} is uniform in $k$ as there
is a finite number of $k$'s.

Now let $m= n_2-n_1$ and $M= N^{1/\alpha}$. Then 
$m^{1/\alpha} > \eps_1^{1/\alpha} M$.
Recalling \eqref{eq:towerrule}, for every $x\in \integers^d$ with $|x|< KM$,
we have
\begin{equation}
\label{eq:weak2}
| \E(1_{\{|S_{m} + x| <KM\}}F(S_{m} + x)) | \leq
\sum_{k=1}^{\brk} |
\E(1_{\{S_{m} + x \in B_{k,M}\}}F(S_{m} + x))|
\end{equation}

Now by Theorem \ref{thm:llt} and the choice of $\delta$, we have
for all $x\in \integers^d$ with $|x|< K M$
\begin{equation}
\label{eq:weak3}
| \E( 1_{\{ S_{m} +x \in B_{k,M} \}}F(S_{m} + x)) |= 
| \sum_{y \in B_{k,M}} (p_{k,x,m} + e_{x,k,m,y})m^{-d/\alpha} F(y) |,
\end{equation}
where $p_{k,x,m}=g\left(\frac{z_k M -x}{m^{1/\alpha}}\right)$,
$z_k$ is the center of $B_k$ and  
for $m$ sufficiently large $e_{x,k,m,y}< 2 \eta$ 
(where 
$\eta$ is given by \eqref{DefEta}) 
uniformly in $x,k,y$ as above.
Consequently, 
$$\sum_k \sum_y e_{x,k,m,y} m^{-d/\alpha}
F(y)
\leq 
m^{-d/\alpha} (KN^{1/\alpha} )^d 2 \eta 
\|F\|_{\infty}
\leq  \frac{\eps}{10\|F\|_{\infty}}$$ 
for sufficiently large $m$ (that is for sufficiently large $N$). Thus dropping $e_{x,k,m,y}$ from the right hand side of
\eqref{eq:weak3} gives a negligible error.
The remaining term is $ \DS p_{k,x,m}m^{-d/\alpha} \sum_{y \in B_k} F(y)$, which 
when summed over $k$, is small by \eqref{eq:mesobox}. Thus the absolute value of
\eqref{eq:weak2} is smaller than $\eps /(5 \| F\|_{\infty})$ for $N$ sufficiently large, 
which completes the proof of \eqref{eq:towerrule}. Theorem \ref{ThWLLN} follows.

\subsection{Proof of Theorem \ref{ThArc}}

We prove the second statement. The first one is a trivial corollary. Indeed, 
given $F \in \bG_{\pm}$ with $\brF_- = \brF_+$,
the convergence follows from Theorem \ref{ThWLLN}. On the other hand if $\brF_- \neq \brF_+$, then we can
consider
$\DS \tilde F(x) = \frac{F(x) - \brF_-}{\brF_+ - \brF_-}$ and note that 
$\bar{\tilde F}_-=0$, $\bar{\tilde F}_+=1$ and
$\DS \tilde{T}_N=
\sum_{n=1}^N \tilde F(S_n)= \frac{T_N-N \brF_-}{\brF_+-\brF_-},$
whereby one derives the limit distribution of $T_N$.

Thus we assume $\brF_- =0 $, $\brF_+ =1$.
Denote $H(x) = 1$ if $x>0$, otherwise $H(x) = 0$. Let us also write 
$G(x) = F(x) - H(x)$.

Then 
$\bar G_- =\bar G_+= 0$  and so
$G \in \bG_0$. 
Decompose
$$
T_N = \frac{1}{N} \sum_{n=1}^N G(S_n) +
\frac{1}{N} \sum_{n=1}^N H(S_n).
$$
The first sum on the RHS converges to zero by Theorem \ref{ThWLLN} and
the second sum on the RHS converges weakly to the arcsine law by classical theory
\cite[\S XII.8]{Fel}. Then the LHS also converges weakly to the arcsine law
by Slutsky's theorem.

\section{SLLN in $\bG_\gamma.$}
Here we prove Theorem \ref{ThGGamma}.

Fix $F \in \bG_\gamma$. As before, we can assume w.l.o.g.\ that 
$\bar F = 0$. 
Since $F \in \bG_\gamma$, the proof of Theorem\ 2.3 of \cite{DN18} shows that,
for any $\eta \in (0,1)$,

\begin{equation} \label{GGamma10}
  \lim_{k \to \infty} \, \sup_{|x| \le k^{\eta\gamma}} \left| 
  \frac{\E_x(T_k)} {k} \right| = 0,
\end{equation}
where $\E_x$ denotes the expectation in the case where $S_0=x$.

\ignore{We start by proving the theorem for case $0 < \alpha < 1$, or 
$1 < \alpha < 2$ and $E(X_1)=0$. Fix $\eta$ as in (\ref{GGamma10})
such that $\eta \gamma > 1/\alpha$ and define
\begin{equation} \label{GGamma15}
  \cA_N := \left\{ \max_{0 \le k \le N} |S_k| \le 
  N^{\eta \gamma} \right\}.
\end{equation}}

\begin{lemma}
\label{LmMax}
Suppose that $1< \gamma_1 $ in case (i) or
$1/\alpha<\gamma_1$ in cases (ii) and (iii). Then with probability one 
 we have that, for large $N$, 
$$ \max_{0\leq k \leq N} |S_k|\leq N^{\gamma_1}. $$
\end{lemma}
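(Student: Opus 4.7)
The three cases require somewhat different arguments, but the overarching plan is to combine a maximal inequality with a heavy-tail estimate, apply Borel--Cantelli along a geometric subsequence, and interpolate.

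Case (i) is immediate. Since $\alpha>1$ we have $\E|X_1|<\infty$, so the classical strong law of large numbers yields $S_n/n\to \E(X_1)$ almost surely. Hence there is a random constant $C$ such that $\max_{0\le k\le N}|S_k|\le C+2|\E(X_1)|\,N$, which is dwarfed by $N^{\gamma_1}$ once $\gamma_1>1$ and $N$ is sufficiently large.

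For cases (ii) and (iii), fix $\gamma_1'\in(1/\alpha,\gamma_1)$. The main input is the tail bound
\[
\P(|S_n|>t)\leq C\,n\,t^{-\alpha}\qquad \text{for every }t\geq n^{1/\alpha},
\]
which follows from the standard heavy-tail analysis: in the normal domain of attraction with index $\alpha<2$ we have $\P(|X_1|>t)=O(t^{-\alpha})$, and a union bound over which single summand might exceed $t$, combined with a Fuk--Nagaev-type estimate for the truncated sum, yields the above (when $\alpha\in(1,2)$ one uses $\E(X_1)=0$ to avoid a deterministic drift, while when $\alpha<1$ the centering is of order $n\leq n^{1/\alpha}$ and is absorbed into the constant); for $\alpha=2$ it comes from Chebyshev. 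Etemadi's maximal inequality $\P\!\left(\max_{0\le k\le n}|S_k|>3t\right)\leq 3\max_{0\le k\le n}\P(|S_k|>t)$ then upgrades this to
\[
\P\!\left(\max_{0\le k\le n}|S_k|>t\right)\leq C'\,n\,t^{-\alpha}\qquad \text{for }t\geq n^{1/\alpha}.
\]

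Now I set $N_j=2^j$ and apply this with $t=N_j^{\gamma_1'}$ (admissible because $\gamma_1'>1/\alpha$):
\[
\sum_{j}\P\!\left(\max_{0\le k\le N_j}|S_k|>N_j^{\gamma_1'}\right)\leq C'\sum_{j}N_j^{\,1-\alpha\gamma_1'}<\infty,
\]
since $\alpha\gamma_1'>1$. Borel--Cantelli yields that almost surely $\max_{0\le k\le N_j}|S_k|\le N_j^{\gamma_1'}$ for every sufficiently large $j$. For an arbitrary $N$ with $N_j\le N<N_{j+1}$,
\[
\max_{0\le k\le N}|S_k|\leq \max_{0\le k\le N_{j+1}}|S_k|\leq N_{j+1}^{\gamma_1'}=2^{\gamma_1'}N_j^{\gamma_1'}\leq N^{\gamma_1}
\]
for all large $N$, using $\gamma_1'<\gamma_1$. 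This completes the argument.

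The principal technical obstacle is justifying the tail estimate $\P(|S_n|>t)\leq Cnt^{-\alpha}$ uniformly on the full range $t\geq n^{1/\alpha}$, and in particular in case (iii) where $\E(X_1)$ need not be defined. This is where the \emph{normal} (as opposed to generalized) domain of attraction hypothesis is used: it forces $\P(|X_1|>t)\asymp t^{-\alpha}$ with no slowly varying correction, so the classical one-big-jump decomposition goes through cleanly. Everything else is a soft application of Etemadi's inequality, Borel--Cantelli and a dyadic interpolation in which the margin $\gamma_1-\gamma_1'$ is spent.
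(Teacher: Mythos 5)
Your argument is correct, but it takes a genuinely different route from the paper. The paper settles case (i) by the strong law of large numbers, exactly as you do, and then disposes of cases (ii) and (iii) by citation: by Marcinkiewicz \cite{M39} in case (ii) and L\'evy \cite{L31} in case (iii), the event $|S_N|>N^{1/\alpha+\eps}$ occurs only finitely often almost surely, and choosing $\eps$ with $1/\alpha+\eps<\gamma_1$ immediately gives the claim. You instead re-prove such a Marcinkiewicz--Zygmund-type bound from scratch: the one-big-jump/truncation estimate $\P(|S_n|>t)\le C\,n\,t^{-\alpha}$ for $t\ge c\,n^{1/\alpha}$ (Chebyshev when $\alpha=2$), upgraded by Etemadi's maximal inequality and summed along dyadic times via Borel--Cantelli, with the margin $\gamma_1-\gamma_1'$ absorbing the constants from Etemadi and the dyadic interpolation. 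Your version is self-contained and quantitative---it gives a polynomial bound on $\P\bigl(\max_{0\le k\le N}|S_k|>N^{\gamma_1'}\bigr)$ rather than only an almost-sure statement---but it genuinely uses the \emph{normal} domain of attraction through the tail asymptotics $\P(|X_1|>t)=O(t^{-\alpha})$, whereas the strong laws the paper invokes need only $\E|X_1|^p<\infty$ for some $p<\alpha$ with $1/p<\gamma_1$, which is weaker and makes the paper's proof a one-liner. One small repair to your sketch: in case (iii) the centering of the truncated sum is not ``of order $n$'' (for $\alpha<1$ even $\E|X_1|$ may be infinite); rather $\E\bigl(|X_1|\one_{\{|X_1|\le t\}}\bigr)=O(t^{1-\alpha})$, so the drift of the truncated sum is $O(n\,t^{1-\alpha})=O\bigl(t\cdot n\,t^{-\alpha}\bigr)\le Ct$ in the range $t\ge n^{1/\alpha}$, which is exactly what your decomposition needs.
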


\begin{proof}
In case (i) the statement follows from the Law of Large Numbers, so we only need to consider
cases (ii) and (iii).
We have for any $\eps >0$ that $|S_N| >N^{1/\alpha + \eps}$ holds only finitely many times almost surely
by \cite{M39} in case (ii) and by \cite{L31} in case (iii). 
\end{proof}

Choose $\gamma_1<\gamma$ as in Lemma \ref{LmMax}
and
$\eta<1$ such that 
$\gamma_1<\gamma\eta.$ 
 For
$j = 0, 1, \ldots, \lf N^{1-\eta} \rf$, set $\tilde T_j := \tilde T_{N,j} :=
N^{-\eta} \, T_{\lf j N^\eta \rf}$ (with the convention $T_0 \equiv 0$) 
and denote by $\tilde \cF_j := \tilde \cF_{N,j}$  the $\sigma$-algebra 
generated by $\{ S_k \}_{k=0}^{\lf j N^\eta \rf}$. 
Denote $\cA_{j,N}=\{|S_{\lf j N^\eta \rf}|\leq N^{\gamma_1}\}.$
Fix $\eps>0$.
We claim that there exists $N_0 = N_0(\eps)$ such that, for all $
N \ge N_0$ and $j < \lf N^{1-\eta} \rf$,
\begin{equation} \label{GGamma20}
 \left| \E \! \left( \left. 1_{\cA_{j,N}} \! \left( \tilde T_{j+1} - \tilde T_j \right) \right| 
  \tilde \cF_j \right) \right| \le \eps.
\end{equation}
Indeed if $\cA_{j,N}$ occurs then \eqref{GGamma20} holds due to \eqref{GGamma10},
otherwise it holds since the LHS is zero.

\ignore{
To see this we use (\ref{GGamma10}) with $k := N^\eta$. For all $N$ larger
than or equal to some $N_0(\eps)$ and $|x| \le N^{\eta \gamma}$, we have 
that $| N^{-\eta} E_x(T_{N^\eta}) | \le \eps$. All realizations of the RW 
within $\cA_N$ are such that $| S_{\lf j N^\eta \rf} | < N^{\eta \gamma}$.
So the Markov property of the RW proves (\ref{GGamma20}). Moreover 
$| \tilde T_{j+1} - \tilde T_j | \le \| F \|_\infty$.}

Setting
\begin{equation} \label{GGamma30}
  Y_j := 1_{\cA_{j,N}} \! \left( \tilde T_{j+1} - \tilde T_j \right) - D_j
\end{equation}
where
\begin{equation} \label{GGamma40}
  D_j:=\E \! \left( 1_{\cA_{j,N}} \! \left( \tilde T_{j+1} - \tilde T_j \right) \Big| 
  \tilde \cF_j \right)
\end{equation}
defines a martingale difference, w.r.t.\ $\{ \tilde \cF_j \}$, with $|Y_j| \le
\| F \|_\infty + \eps$. Applying Azuma's inequality we get that, for all
 $\delta>0$,
$$
  \Prob \!\left( \left| \sum_{j=0}^{N^{1 - \eta} -1} Y_j \right| \ge \delta N^{1 - \eta}
  \right) \le 2 \exp \!\left( -\frac{\delta^2 N^{1 - \eta}} {2 (\| F \|_\infty + 
  \eps)^2} \right).
$$

Therefore, by Borel-Cantelli,
$$
  \limsup_{N \to \infty} \, \frac1 {N^{1 - \eta}} \! 
  \left| \sum_{j=0}^{N^{1 - \eta} -1} Y_j \right|
  \le \delta \quad \mbox{a.s.}
$$
Since $\delta$ is arbitrary, 
with probability one we have
\begin{equation}
\label{LLNYJ}
\lim_{N\to\infty} \frac1 {N^{1-\eta}} \!\! \sum_{j=0}^{N^{1 - \eta} -1} Y_j =0. 
\end{equation}
On the other hand, 
definitions (\ref{GGamma30})--(\ref{GGamma40}) and Lemma \ref{LmMax} show 
that, with probability one, for all large $N$ depending on the realization of the walk,

\begin{equation}
\label{NoExcNoGaps}
T_N=T_{\lfloor N^{1-\eta} \rfloor N^{\eta}} +O(N^{\eta}) = 
N^\eta \left(\sum_{j=0}^{N^{1 - \eta} -1} Y_j +\sum_{j=0}^{N^{1-\eta} -1} D_j \right)
+ O(N^{\eta}).
\end{equation}
In view of \eqref{GGamma20}, \eqref{LLNYJ}, and \eqref{NoExcNoGaps}
 we have:
$$  \limsup_{N \to \infty}\,  \left|\frac{T_N}{N} \right|
 = \limsup_{N \to \infty} \, \frac{1}{N^{1 - \eta}} \! 
 \left| \sum_{j=0}^{N^{1 - \eta} -1}  \left(Y_j+D_j\right)  \right|$$
 $$\hskip30mm=\limsup_{N \to \infty} \, \frac{1}{N^{1 - \eta}} \! 
 \left| \sum_{j=0}^{N^{1 - \eta} -1} D_j  \right|
   \le \eps \quad \mbox{a.s.}$$
Since $\eps$ is arbitrary, $\DS \lim_{N\to\infty} \frac{T_N}{N}=0$
almost surely. 


\section{Speed of convergence in $\bG^{\beta}_\gamma.$}
\label{ScSpeed}
Here we prove Theorem \ref{ThGGammaBeta}.

Note that $\bG^{\beta_1}_{\gamma_1} 
\subset \bG^{\beta_2}_{\gamma_2}$ whenever $\beta_1 \leq \beta_2$ and $\gamma_2 \leq \gamma_1$.
Since $\rho_d(\beta)$ is constant for $\beta \in [0,(d-1)/d]$ and is continuous at $(d-1)/d$, it is sufficient to prove the theorem
for 
\begin{equation}
\label{betabound}
\beta > \frac{d-1}{d} .
\end{equation}

Let $\mathbb P_x(.) = \mathbb P(.|S_0 = x)$, $\mathbb E_x(.) = \mathbb E(.|S_0 = x)$.

We start with the following
\begin{proposition}
\label{lemma:weak}
Under the conditions of Theorem \ref{ThGGammaBeta},
for every $d \in \naturals$, every $\beta \in ((d-1)/d,1)$ and every $\eps >0$ there exists some $\delta >0$ so that 
\begin{equation}
\label{eq:lemmaweak}
\sup_{x_0: |x_0| \leq N^{1/2 + \delta}}\mathbb  E_{x_0} (T_{N}^2) < C N^{2\rho_d(\beta) + 2\eps}.
\end{equation}
\end{proposition}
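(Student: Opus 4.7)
The plan is to expand $\mathbb E_{x_0}(T_N^2) = \sum_n \mathbb E_{x_0}[F^2(S_n)] + 2\sum_{n_1<n_2} \mathbb E_{x_0}[F(S_{n_1})F(S_{n_2})]$ (the diagonal contributing $O(N)$, negligible for the target bound since $\rho_d(\beta) > 1/2$), apply the Markov property to rewrite each off-diagonal term as $\mathbb E_{x_0}[F(S_{n_1}) H_m(S_{n_1})]$ with $m = n_2-n_1$ and $H_m(x) := \mathbb E_x[F(\tilde S_m)]$, and then combine the local limit theorem with the regularity of $F \in \bG^\beta_\gamma$. Under the moment hypothesis on $X_1$, a quantitative LLT (available with all moments) gives $\mathbb P(S_n = x) \approx n^{-d/2} g(x/n^{1/2})$ with smooth density and controllable error; together with $F \in \bG^\beta_\gamma$ (for very large $\gamma$), $H_m(x)$ is well-approximated by the spatial average of $F$ over a ball of radius $m^{1/2}$ around $x$, so $|H_m(x)| \leq C m^{d(\beta-1)/2}$ whenever $|x| \leq N^{1/2+\delta}$.

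To reach the optimal exponent $\rho_d(\beta) = d(\beta-1)/2 + 1$, I would coarse-grain $F$ at the diffusive scale $L := N^{1/2}$ and decompose $F = \tilde F + g$, where $\tilde F(x) := \bar F_{Q_L(x)}$ is the cube average at scale $L$ containing $x$, and $g := F - \tilde F$ has zero sum on each $L$-cube. By the regularity hypothesis, $|\tilde F| \leq C N^{d(\beta-1)/2}$, so $|\tilde T_N|^2 \leq CN^{2\rho_d(\beta)}$ on the high-probability event that the walker stays in $|x| \leq L^\gamma$ (guaranteed by the very large $\gamma$ and moment assumptions on $X_1$, modulo negligible tails). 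A further dyadic decomposition $F = F_K + \sum_{k=0}^{K-1} F_k$ with $F_k := \bar F^{(k)} - \bar F^{(k+1)}$ at scales $L_k = 2^k$ for $k \leq K = \lfloor \log_2 \sqrt N \rfloor$ enables one to bound each $\mathbb E[F_k(S_{n_1}) F_k(S_{n_2})]$ using the zero-sum-on-$L_{k+1}$-cubes property of $F_k$, the bound $|F_k| \leq C L_k^{d(\beta-1)}$, and the heat-kernel smoothness $\|\nabla p_m\|_1 = O(m^{-1/2})$. Summing over pairs and scales produces the claimed bound, with the logarithm from the $O(\log N)$ scales absorbed in the $N^{2\eps}$ allowance, and the condition $\beta > (d-1)/d$ ensuring convergence of the key sum $\sum_{m \leq N} m^{d(\beta-1)/2}$ to the right order $N^{1 + d(\beta-1)/2} = N^{\rho_d(\beta)}$.

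The main obstacle is the careful bookkeeping of the several error terms: the LLT error (polynomially decaying under the moment assumption), the oscillation errors from approximating $q_{n_1}(x)$ and $H_m(x)$ as constant on coarse cubes (requiring derivative estimates on the heat kernel), the tail contribution from rare excursions of the walker outside $|x| \leq L^\gamma$, and most delicately the matching of scales in the dyadic decomposition so that the optimal exponent $\rho_d(\beta)$ is achieved in the critical regime $\beta > (d-1)/d$ rather than the weaker exponent produced by a naive Cauchy--Schwarz across scales. The hypotheses that $\gamma = \gamma(d,\beta,\eps)$ is very large and that $|x_0| \leq N^{1/2+\delta}$ for small enough $\delta$ are tailored precisely so that all these errors remain subdominant relative to the target $N^{2\rho_d(\beta)+2\eps}$.
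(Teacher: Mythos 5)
Your strategy is, at heart, the same cancellation mechanism as the paper's proof --- smallness of block averages of $F$ (from $\bG_\gamma^\beta$, $\brF=0$) played against first-derivative bounds on the transition kernel coming from the Edgeworth expansion --- repackaged in coarse-graining language instead of summation by parts. But as written it stops short of the decisive estimate. Your first paragraph yields only the sup bound $|H_m(x)|\le C m^{d(\beta-1)/2+\eps}$ for the inner expectation (and even that needs the exact zero-sum/summation-by-parts structure: replacing $p_m$ by a locally constant kernel at scale $c\sqrt m$ leaves an error of order $c\,\|F\|_\infty$, not decaying in $m$). Plugging the sup bound into $\E_{x_0}\bigl(F(S_{n_1})H_m(S_{n_1})\bigr)$ with the trivial bound $\|F\|_\infty$ on the outer factor and summing over $n_1,m\le N$ gives only $N^{2+d(\beta-1)/2}$, which is strictly larger than the target $N^{2\rho_d(\beta)+2\eps}=N^{2+d(\beta-1)+2\eps}$. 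To gain the second factor $n_1^{d(\beta-1)/2}$ one must exploit the oscillation of $F$ a second time, in the $S_{n_1}$-sum, and for that one needs, in addition to $\sup|H_m|$, a bound on the discrete gradient of $H_m$ of order $m^{(d(\beta-1)-1)/2}$ --- this is hypothesis (H2) of Lemma \ref{lem:KS} and estimate \eqref{eqb1} in the paper, and it is exactly what makes the two-fold summation by parts close to give \eqref{KSK3} (and, in $d\ge2$, \eqref{ManyDer} together with the bootstrap for $m_2<m_1$). Your proposal never states this derivative bound on the inner expectation; your dyadic scheme could supply it (e.g.\ $\|\nabla G_k\|_\infty\le\|F_k\|_\infty\|\nabla p_m\|_{\ell^1}$), but the cross-scale terms $\E\bigl(F_j(S_{n_1})F_k(S_{n_2})\bigr)$, $j\ne k$, which you yourself single out as the delicate point, are left unresolved, so the plan does not yet amount to a proof of \eqref{eq:lemmaweak}.

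A second concrete problem is your repeated reliance on ``very large $\gamma$''. Under the hypotheses of Theorem \ref{ThGGammaBeta}, for $d=1$ one only has $F\in\bG^\beta_{2/\beta}$, so at walker positions $|x|\sim N^{1/2+\delta}$ the averaging bound of $\bG_\gamma^\beta$ is available only for cubes of side $L\gtrsim N^{\beta/4}$: your dyadic decomposition cannot be pushed below that scale, the bound $|F_k|\le CL_k^{d(\beta-1)}$ is not justified for small $k$ there, and pairs with $n_1$ or $n_2-n_1$ smaller than $N^{\beta/2}$ must be handled by separate crude estimates. This is precisely the role of the gap condition \eqref{gapsqrt} (with $\alpha=1/\gamma=\beta/2$) and of the three exceptional regimes treated at the end of \S\ref{SSVar-1}; an analogous (milder) cutoff at scale $N^{\eps}$ is needed for $d\ge2$, where $\gamma=1/\eps$. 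So to complete your argument you would need (i) the gradient bound on the inner expectation and a genuine treatment of all scale pairs in the outer sum, and (ii) the small-scale/short-gap case analysis dictated by the actual value of $\gamma(d,\beta,\eps)$.
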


Note that Proposition \ref{lemma:weak} combined with Chebyshev's inequality implies that
$$
 \frac{T_N}{N^{\rho_d(\beta)+ \eps} }\Rightarrow 0
\text{ in law as } N\to \infty .
$$

Section \ref{ScSpeed} is divided into three parts. 
In \S \ref{SSRefl} we derive Theorem \ref{ThGGammaBeta} from Proposition \ref{lemma:weak}.
In \S \ref{SSVar-1} we prove Proposition \ref{lemma:weak}
for $d=1$. In \S \ref{SSVar-2},  we extend the proof of Proposition \ref{lemma:weak} to arbitrary dimension $d$.

\subsection{Proof of Theorem \ref{ThGGammaBeta}}
\label{SSRefl}

Here, we derive the theorem from Proposition \ref{lemma:weak}. For simplicity we write $\rho = \rho_d(\beta)$.

We will show that 
\begin{equation}
\label{eq:bc}
\P (\exists n \leq N: |T_n| > 2 N^{\rho + \eps /2}) \leq  C N^{- \eps /2}
\end{equation}

If \eqref{eq:bc} holds, then writing $N_k = 2^k$, we find 
$$
\P (\exists n = N_{k-1},... N_k : |T_n| > 2 N_k^{\rho + \eps /2}) \leq  C N_k^{- \eps /2}.
$$
and the theorem follows from Borel Cantelli lemma.
To prove \eqref{eq:bc}, let us write 
$$\tau_N = \min \{ \min \{ n: |T_n| > 2 N^{\rho + \eps /2} \}, N \}.$$
Then
\begin{gather*}
\P (\exists n \leq N: |T_n| > 2 N^{\rho + \eps /2}) \\
\leq
\P (|T_N| > N^{\rho + \eps /2}) + \P (|T_{\tau_N}| > 2 N^{\rho + \eps /2}, |T_N| \leq N^{\rho + \eps /2}) = : p_1 + p_2.
\end{gather*}
By Proposition \ref{lemma:weak} for $x_0 = 0$ and by Chebyshev's inequality, 
we have $p_1 \leq C N^{- \eps /2}$. To bound $p_2$, we distinguish two cases: $S_{\tau_N} > N^{1/2 + \delta}$ and 
 $S_{\tau_N} \leq N^{1/2 + \delta}$. 
 The first case has negligible probablity by moderate deviation bound for random walks (see
 formula \eqref{MDE} below). In the second case we compute
\begin{gather*}
 \P (|T_{\tau_N}| > 2 N^{\rho + \eps /2}, |T_N| \leq N^{\rho + \eps /2}, |S_{\tau_N}| \leq N^{1/2 + \delta})\\
\leq \sup_{x_0 : |x_0| \leq  N^{1/2 + \delta} } \max_{n = 1,..., N} \P_{x_0}( |T_n| \geq N^{\rho + \eps /2})
\end{gather*}
which is again bounded by $ C N^{- \eps /2}$ by Proposition \ref{lemma:weak} and Chebyshev's inequality.
We have verified \eqref{eq:bc} and finished the proof of the theorem.

\subsection{Proof of Proposition \ref{lemma:weak} for $d=1$}
\label{SSVar-1}

We have $\beta \in (0,1)$ and $2 \rho_d(\beta) = \beta +1$.
We start by recalling 
some results on expansions in the LLT in case all moments are finite (a.k.a. Edgeworth expansion).

\begin{theorem}(\cite[Section 51]{GK54})
\label{thm:llt2}
Under the assumptions of Theorem \ref{ThGGammaBeta}, there are polynomials $Q_1, Q_2, ...$
so that for any $M \in \N$ 
\begin{equation}
\label{LLTerror}
\P (S_n = l) = \frac{1}{\sqrt n} \mathfrak g (l /\sqrt n) 
(1 + \sum_{m=1}^M Q_m(l /\sqrt n) n^{-m/2}) + e_{n,l,M}
\end{equation}
where $\mathfrak g$ is a Gaussian density and
$$
\limsup_{n \to \infty} \sup_{l \in \integers} e_{n,l,M} n^{M/2 +1} < \infty.
$$
\end{theorem}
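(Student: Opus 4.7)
The plan is to carry out the classical Edgeworth expansion via characteristic functions. Let $\phi(t) = \E(e^{itX_1})$; by Fourier inversion,
\[
\P(S_n = l) = \frac{1}{2\pi} \int_{-\pi}^{\pi} \phi(t)^n e^{-itl}\, dt.
\]
First I would split this integral at $|t| = \delta$ for some small fixed $\delta > 0$. The aperiodicity hypothesis (condition (2) of Section \ref{SSResults}) forces $|\phi(t)| \leq 1 - c_\delta$ on $\delta \le |t| \le \pi$, so the large-$|t|$ piece is $O(\theta^n)$ with $\theta < 1$, which is absorbed into $e_{n,l,M}$ and is much smaller than any algebraic rate.

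The substance of the argument lies in the small-$|t|$ piece. Because all moments of $X_1$ are finite, the cumulant expansion
\[
\log\phi(t) = -\tfrac{1}{2}\sigma^2 t^2 + \sum_{k=3}^{M+2} \frac{\kappa_k (it)^k}{k!} + O(|t|^{M+3})
\]
holds in a neighbourhood of $0$, where $\sigma^2 = V(X_1)$ and $\kappa_k$ are the cumulants (using $\E(X_1)=0$). After the change of variable $t = u/\sqrt n$, exponentiating $n\log\phi(u/\sqrt n)$ and expanding the non-Gaussian factor as a formal power series in $n^{-1/2}$ up to order $M$ yields
\[
\phi(u/\sqrt n)^n = e^{-\sigma^2 u^2/2}\Bigl(1 + \sum_{m=1}^{M} \frac{P_m(iu)}{n^{m/2}}\Bigr) + \tilde R_M(u,n),
\]
with $P_m$ explicit polynomials in $\kappa_3,\dots,\kappa_{m+2}$. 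Fourier-inverting term by term then produces the leading density $n^{-1/2}\mathfrak g(l/\sqrt n)$ and corrections of the form $n^{-1/2}\mathfrak g(l/\sqrt n) Q_m(l/\sqrt n) n^{-m/2}$, where the polynomials $Q_m$ arise from the identity $\int (iu)^k e^{-\sigma^2 u^2/2} e^{-ius}\, du = c_k\, \mathfrak g(s) H_k(s)$ relating powers of $iu$ to Hermite-type polynomials in $s = l/\sqrt n$.

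The main obstacle is the uniform-in-$l$ control of $\tilde R_M$. The key input is a careful bookkeeping of Taylor remainders: writing out $\exp(\sum_{k=3}^{M+2} \kappa_k (iu)^k / (k!\, n^{(k-2)/2}) + O(|u|^{M+3}/n^{(M+1)/2}))$ and truncating the exponential series, one checks that $|\tilde R_M(u,n)| \leq C\,(1 + |u|^{N_M})\, e^{-\sigma^2 u^2/4}\, n^{-(M+1)/2}$ on $|u| \leq \delta\sqrt n$, where $N_M$ depends on $M$ but not on $n$ or $l$. Since $|e^{-iul/\sqrt n}| = 1$, the $l$-dependence drops out of the bound on $\int \tilde R_M(u,n) e^{-iul/\sqrt n}\, du$, and the $1/(2\pi\sqrt n)$ prefactor produces an additive error of order $n^{-(M+2)/2} = n^{-M/2 - 1}$. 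Combining with the exponentially small contribution from $|t| > \delta$ yields $\sup_l |e_{n,l,M}| = O(n^{-M/2 - 1})$, as claimed.
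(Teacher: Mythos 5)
The paper does not prove Theorem \ref{thm:llt2} at all; it quotes it from \cite[Section 51]{GK54}, and your argument is precisely the classical characteristic-function proof given there: lattice Fourier inversion on $[-\pi,\pi]$, exponential smallness away from $0$ (which needs conditions (1) and (2) of Section \ref{SSResults} together, not aperiodicity alone, to get $\sup_{\delta\le|t|\le\pi}|\phi(t)|<1$), cumulant expansion and term-by-term inversion near $0$, with uniformity in $l$ coming from $|e^{-iul/\sqrt n}|=1$. Your remainder bookkeeping is correct as well --- finiteness of all moments lets you expand one order beyond $M$, which is exactly what upgrades the error to $O(n^{-M/2-1})$ rather than merely $o(n^{-(M+1)/2})$ --- so the proposal is sound and coincides with the proof the paper cites.
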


Next, we claim that for any $\eta>0$ there exists $C<\infty$ such that 
\begin{equation}
\label{MDE}
\P (|S_n| > n^{1/2 + \eta}) < C n^{- 1/\eta}.
\end{equation}
To prove this claim, first observe that 
$$
\mathbb E (S_n^{2k}) = \sum_{n_1 = 1}^{n} ... \sum_{n_{2k} = 1}^{n} \mathbb E (X_{n_1} ... X_{n_{2k}}) \leq C_{2k} n^{k}. 
$$
Indeed, since $\mathbb E(X_1) = 0$, for every $i=1,...,2k$ we need to have $j \neq i$ so that $n_i = n_j$ to make the expectation non zero.
This estimate for $k > 1/(2 \eta ^2)$ combined with the Markov inequality gives \eqref{MDE}.

Given a function $h = h(n,x) : \integers_+ \times \integers \to \reals$, we write 
${ \nabla}h$ for the discrete
derivative in the second coordinate, i.e. 
$$\nabla h(n,x) = h(n,x) - h(n, x-1). $$
Note that 
\begin{equation}
\label{productrule}
\nabla (gh)(n,x) = (\nabla g)(n,x)h(n,x) + g(n, x-1)(\nabla h)(n,x).
\end{equation}
We will also write $\nabla^k h$ for the $k$th discrete derivative.

Denote
\begin{equation}
\label{defH}
H(n,x) = \P(S_n = x).
\end{equation}
With this notation, \eqref{MDE} can be rewritten as
\begin{equation}
\label{MDE2}
\sum_{x: |x| \geq n^{1/2+ \eta}} H(n,x) < C_{\eta} n^{-1/\eta} \text{ for any } \eta >0.
\end{equation}

Also \eqref{LLTerror} implies that there is a constant $c$ so that,
for every $k=0,1,2$,
\begin{equation}
\label{disder}
\sup_{x \in \integers}|\nabla^k H (n,x)| \leq c n^{-\frac{k+1}{2}}.
\end{equation}



Observe that
\begin{equation}
\label{eq:moment}
\mathbb E_{x_0}({T_{N}^2}) = \sum_{0 \leq n_1\leq n_2 \leq N} c_{n_1,n_2} E_{n_1,n_2}(x_0)\
\end{equation}

where $c_{n_1,n_2} = 1$ if $n_1 = n_2$
and $c_{n_1,n_2} = 2$ otherwise and 
\begin{gather*}
E_{n_1,n_2}(x_0) = \mathbb E_{x_0}(F(S_{n_1}) F(S_{n_2})) \\
= \sum_{x_1, x_2 \in \integers}  \P(S_{n_2 - n_1} = x_2 - x_1) 
\P_{x_0}(S_{n_1} = x_1) 
F(x_1) F(x_2)
\end{gather*}

We will show the following: for any $0 \leq n_1 \leq n_2 \leq N$ such that
\begin{equation}
\label{gapsqrt} n_1>N^\alpha, \quad
n_2 - n_{1} > { N^{\alpha} \text{ where } \alpha = 1/ \gamma = \beta / 2}
\end{equation} 
and for any $x_0$ with $|x_0| < N^{1/2 + \delta}$, we have
\begin{equation}
\label{KSK3}
|E_{n_1,n_2}(x_0)| \leq 
\end{equation}
$$
C n_{1}^{\frac{\beta - 1}{2} + \eps}
(n_{2}-n_{1})^{\frac{\beta - 1}{2} + \eps} 
+
C n_{1}^{\frac{\beta }{2} + \eps}
(n_{2}-n_{1})^{\frac{\beta - 2}{2} + \eps}.
$$

Summing the estimate \eqref{KSK3} for $n_1,n_2$ satisfying \eqref{gapsqrt} we obtain $N^{\beta +1 + 2 \eps}$ as needed.
To complete the proof of the proposition,
it remains to 

(I) prove \eqref{KSK3};

(II) verify that the contribution of $(n_1,n_2)$'s that do not satisfy \eqref{gapsqrt} is also negligible.

We start with (I).


We will use the following lemma:
\begin{lemma}
\label{lem:KS}
There is a constant $\hC$ such that for any positive integer $n$ and any constants
$A,B$, the following holds. If a bounded function $g(x): \integers \to \reals$ satisfies 
\begin{itemize}
\item[(H1)] $\DS \sup_{x: |x| \leq n^{1/2+ \delta'}} |g(x)| \leq A$
\item[(H2)] $\DS \sup_{x: |x| \leq n^{1/2+ \delta'}}|\nabla g(x)| \leq B$
\end{itemize}
for some sufficiently small $\delta'$, then, for $i=0,1$,
\begin{equation}
\label{KSlemma}
\sup_{y: |y| \leq \frac12 n^{\frac{1 /2 + \delta'}{\alpha}}}
\left| \sum_{z \in \integers} \nabla^i H(n,z-y) g(z) F(z) \right| \leq \hC ( \|g\|_{\infty} n^{-10}+ 
A n^{\frac{\beta - i- 1}{2} + \eps} + B n^{\frac{\beta -i}{2} + \eps} ).
\end{equation}
\end{lemma}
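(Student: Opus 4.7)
My plan is to exploit three ingredients: the small-average property of $F$ coming from the $\bG_\gamma^\beta$ hypothesis, the smoothness of $\nabla^i H(n,\cdot)$ at scale $\sqrt n$ recorded in \eqref{disder} together with the $L^1$-type bounds $\sum_z|\nabla^k H(n,z)|\leq Cn^{-k/2}$ for $k=i,i+1$ (which follow from the Edgeworth expansion in Theorem \ref{thm:llt2}), and the regularity of $g$ given by (H1) and (H2).

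I would first truncate the sum to the effective window $\{z:|z-y|\leq n^{1/2+\delta'}\}$ for some $\delta'$ slightly above $\delta$. The deleted tail is controlled by \eqref{MDE2} (and a telescoped version for $\nabla H$) and contributes at most $C\|g\|_\infty n^{-1/\delta'}$, which for $\delta'$ taken small enough is absorbed into the $\hC\|g\|_\infty n^{-10}$ term. I would then partition the window into disjoint blocks $I_j=[p_j,p_j+L-1]$ of length $L:=n^{1/2+\delta}$, chosen so that the hypothesis $|y|\leq n^{(1/2+\delta)/\alpha}=L^\gamma$ guarantees that each block satisfies the position constraint in the definition of $\bG_\gamma^\beta$; consequently $|\sum_{z\in I_j}F(z)|\leq CL^\beta$.

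On each block, writing $\Psi(z):=\nabla^i H(n,z-y)g(z)$ and $G_j(z):=\sum_{w=p_j}^{z}F(w)$, discrete summation by parts gives
$$\sum_{z\in I_j}\Psi(z)F(z)=\Psi(p_j+L-1)\,G_j(p_j+L-1)-\sum_{z=p_j}^{p_j+L-2}G_j(z)\,\nabla\Psi(z+1).$$
The product rule \eqref{productrule} combined with \eqref{disder} and (H1), (H2) yields $\|\nabla\Psi\|_{L^1}\leq CAn^{-(i+1)/2}+CBn^{-i/2}$. Summing the boundary terms over $j$ and converting to $L^1$-norms via a Riemann-sum comparison produces a contribution of size $\leq CAL^{\beta-1}n^{-i/2}=CAn^{(\beta-i-1)/2+\delta(\beta-1)}$, within the $A$-target. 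The bulk terms are bounded by $(\sup_{j,z}|G_j(z)|)\cdot\|\nabla\Psi\|_{L^1}$, which matches both the $A$- and $B$-targets provided the uniform partial-sum bound $\sup_{z\in I_j}|G_j(z)|\leq CL^\beta$ holds.

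The hard part is establishing this uniform partial-sum bound. The $\bG_\gamma^\beta$ definition directly controls averages over length-$L$ intervals, but a partial sum of length $\ell<L$ starting at $p_j$ may not satisfy the position constraint $|p_j|<\ell^\gamma$ when $\ell$ drops below $n^{1/2+\delta}$. My approach would be to invoke $\bG_\gamma^\beta$ with shape parameters $(a_1,b_1)=(0,\ell/L)$ while keeping $L$ fixed, so that the position condition becomes $|p_j|<L^\gamma$ (which does hold), giving $|G_j(p_j+\ell-1)|\leq C(0,\ell/L)\,\ell L^{\beta-1}\leq CL^\beta$ provided the constants $C(0,b_1)$ are uniformly bounded as $b_1$ ranges over $(0,1]$. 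Securing this uniformity — perhaps by a dyadic covering argument or a mild strengthening of the definition — is the essential technical point on which the scheme rests; once established, choosing $\delta$ small enough relative to $\eps$ absorbs the remaining polynomial factors.
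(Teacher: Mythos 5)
Your overall strategy (truncation via \eqref{MDE2}, Abel summation, the product rule together with \eqref{disder}, and a $\bG_\gamma^\beta$ bound on partial sums of $F$) is the same as the paper's, but the step you yourself flag as unresolved is a genuine gap, and it is created by your choice of anchoring. Because you anchor $G_j$ at the left endpoint $p_j$ of each block, the summation by parts forces you to control partial sums of every length $\ell\le L$, and for $\ell\ll L$ the definition of $\bG_\gamma^\beta$ gives you nothing: with the fixed shape $(0,1)$ at scale $\ell$ the position constraint $|p_j|<\ell^\gamma$ fails badly (here $|p_j|$ can be of order $L^\gamma$), the trivial bound $\|F\|_\infty\ell$ is too large in the intermediate range $L^\beta\ll\ell\ll L$, and a dyadic covering does not help since every dyadic piece inherits the same large shift. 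Your proposed remedy, applying the definition with shape $(0,\ell/L)$ at scale $L$, needs the constant $C(a_1,b_1)$ to be uniform as $b_1=\ell/L$ ranges over a set that becomes dense in $(0,1]$; the definition of $\bG_\gamma^\beta$ only provides a constant for each fixed shape, so this uniformity is exactly what is missing, and you do not supply it.

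The paper avoids the issue entirely by an anchoring trick: it performs a single summation by parts over the whole window $\{z:|z-y|\le n^{1/2+\delta}\}$ with $I(z)=\sum_{w=y-2n^{1/2+\delta}}^{z}F(w)$, i.e.\ the partial sums start a distance $2n^{1/2+\delta}$ to the \emph{left} of the summation range. Then every interval occurring in \eqref{KSlemma1.2} has length between $n^{1/2+\delta}$ and $3n^{1/2+\delta}$, so each one is a box of the single fixed shape $(0,1)$ at scale equal to its own length, with shift of size $O\bigl(n^{(1/2+\delta)\gamma}\bigr)$ admissible for that scale; since the constant in the definition of $\bG_\gamma^\beta$ is uniform in the scale $L$ for a fixed shape, this yields \eqref{gammaapp}, $|I(z)|\le Cn^{\beta/2+\eps}$, with no uniformity over shapes required, and the estimate \eqref{KSlemma} follows by multiplying the pointwise bound \eqref{eq:prod} by the window length. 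Your block scheme could be repaired the same way, by anchoring $G_j$ at $p_j-2L$ (accepting a nonzero lower boundary term, which is again $O(L^\beta)$), but as written the key partial-sum bound is not established, so the proof is incomplete; the blocks themselves are also an unnecessary complication, as is the appeal to $L^1$ bounds on $\nabla^k H$, which the paper replaces by the sup bounds \eqref{disder} times the window length.
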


Note that $y$ is allowed to be of order $n^{\frac{1 /2 + \delta'}{\alpha}} \gg n^{1 /2 + \delta'}$ which is the range for $x$
in the hypothesis.
This is important in the application of the lemma later, especially when $n_1$ is big and $n_2 - n_1$ is small.

\begin{proof}
For the rest of the section $C$ will denote a constant
(independent of $A$ and $B$) whose value may change from line to line.
By \eqref{MDE2} and since $F$ is bounded, the sum for $z$'s with $|z-y| > n^{1/2 + \delta'}$
is bounded by $C \|g\|_{\infty} n^{-11}$.
Denote $\DS I(x) = \sum_{w=y- 2 n^{1/2 + \delta'}}^x F(w)$. 
Using summation by parts and (H1),
we find
\begin{gather}
\sum_{z:\; |z-y| \leq n^{1/2 + \delta'}} \nabla^i H(n,z-y)g(z) F(z)
= 
O\left(An^{- 10 }\right) + \nonumber \\
- 
\sum_{z :\; |z -y| \leq n^{1/2 + \delta'}} I(z -1)  \nabla_z \left(\nabla^i H(n,z-y)g(z)\right).
\label{KSlemma1.2}
\end{gather}

Using \eqref{productrule}, \eqref{disder}, (H1) and (H2) we find that
\begin{equation}
\label{eq:prod}
\nabla_z \left(\nabla^i H(n,z-y)g(z)\right) 
\leq C( A n^{-\frac{i+2}{2}} + B n^{-\frac{i+1}{2}}).
\end{equation}
Next, we estimate $|I(z)|$ for $z$ satisfying $|z-y| \leq n^{1/2 + \delta'}$. For any such $z$, $I(z)$ is defined as a sum over the interval
$[a,z] \cap \integers$, where $a = y - 2 n^{1/2 + \delta'}$ and consequently 
the length of this interval satisfies $L = z-a  \in [ n^{1/2 + \delta'}, 3 n^{1/2 + \delta'}]$. 
Using $\alpha = 1/\gamma < 1$ and our assumption $|y| \leq \frac12 n^{\frac{1/2 + \delta'}{\alpha}}$, we find that 
$|a| < L^{\gamma}$ for $n$ sufficiently large.
Since $F \in \bG_{\gamma}^{\beta}$,
with $\brF = 0$, we use the definition of $\bG_{\gamma}^{\beta}$ to conclude
\begin{equation}
\label{gammaapp}
|I(z)| \leq CL^{\beta} \leq C n^{\frac{\beta}{2} + \eps}
\end{equation}
(assuming that $\delta' = \delta'(\eps)$ is small enough).
The last two estimates imply that the sum in \eqref{KSlemma1.2}
is bounded by $C (A n^{\frac{\beta - 1 -i }{2} + \eps} + B n^{\frac{\beta -i }{2} + \eps} )$.
\end{proof}

Now we are ready to estimate $E_{n_1,n_2}(x_0)$. First, let
\begin{equation}
\label{DefG1}
g_{1}(x_{1}) := g_{1,n_2 - n_{1}}( x_{1}) := \sum_{x_2 \in \integers} 
H({n_2 - n_{1}}, x_2 - x_{1}) F(x_2).
\end{equation}

By definition, $\| g_1 \|_{\infty}\leq \|F\|_{\infty}$.
Applying Lemma \ref{lem:KS} with $i=0$, $g = 1$, $n ={n_{2}-n_{1}}$, $A=1$, $B=0$
and using $n_2 - n_{1} > N^{\alpha}$, we find
\begin{equation}
\label{eqa1}
\sup_{x_1: |x_1|\leq \frac12 N^{1/2 + \delta'}} |g_{1}(x_1)| \leq C(n_{2}-n_{1})^{\frac{\beta - 1}{2} + \eps}.
\end{equation}
Using Lemma \ref{lem:KS} the same way but now with $i=1$,  we find
\begin{equation}
\label{eqb1}
\sup_{x_1: |x_1|\leq \frac12 N^{1/2 + \delta'}} |\nabla g_{1}(x_1)| \leq C(n_{2}-n_{1})^{\frac{\beta - 2}{2} + \eps}
\end{equation}

Next, set
\begin{gather}
\label{DefG2}
g_{2}(x_{0}) := g_{2,  n_1 }( x_{0}) 
:=
\sum_{x_{1} \in \integers} 
H({n_{1} - n_{0}}, x_{1} - x_{0}) 
g_{1}( x_{1}) F(x_{1})
\end{gather}

Now we use Lemma \ref{lem:KS} with $i=0$,
$n= n_{1} $, $g=g_1$, 
$A = (n_2 - n_{1})^{\frac{\beta - 1}{2} + \eps}$, $B=(n_2 - n_{1})^{\frac{\beta - 2}{2} + \eps}$.
Since $n \leq N$, 
\eqref{eqa1} and \eqref{eqb1} give
(H1) and (H2) (with $\delta'/2$ instead of $\delta'$).
Also using that $\|g_1\|_{\infty} \leq \|F\|_{\infty}^2$ and $n_{1} - n_{2} > N^{\alpha}$, we get
\begin{equation}
\label{KSK4}
\sup_{x_0: |x_0|\leq \frac12  N^{1/2 + \delta'/2 }} |g_{2}(x_0)|
 \leq 
\end{equation}
$$
C n_{1}^{\frac{\beta - 1}{2} + \eps}
(n_{2}-n_{1})^{\frac{\beta - 1}{2} + \eps} 
+
C n_{1}^{\frac{\beta }{2} + \eps}
(n_{2}-n_{1})^{\frac{\beta - 2}{2} + \eps}
$$
which gives \eqref{KSK3} (with $\delta = \delta'/4$).

It remains to verify (II), that is that the contribution of pairs $(n_1,n_2)$'s that do not satisfy \eqref{gapsqrt} is negligible. 

First, assume that $n_1 > N^{\alpha}$ and $n_2 - n_1 { \leq} N^\alpha$. Then we derive as in \eqref{KSK4} but using the 
trivial bounds $A= 1 + \| F\|_{\infty}$, $B = 2(1 + \| F\|_{\infty})$ 
that 
\begin{equation*}
\sup_{x_0: |x_0|\leq N^{1/2 + \delta}} |g_{2}(x_0)|
 \leq 
C n_{1}^{\frac{\beta - 1}{2} + \eps}
+
C n_{1}^{\frac{\beta }{2} + \eps}.
\end{equation*}
Summing this estimate for $n_1 = N^{\alpha}, ..., N$ and multiplying by 
$N^{\alpha}$ for the number of choices of $n_2$, we obtain
\begin{equation}
\label{eq:smalln}
O(N^{\alpha} N^{\frac{\beta + 1}{2}})  = O( N^{\beta + \frac12}) 
=o( N^{\beta + 1}). 
\end{equation}

Next, assume that $n_1 <N^\alpha,$ $n_2 - n_1 < N^\alpha$. Using the bound 
$|E_{n_1,n_2}(x_0)| \leq \| F \|_{\infty}^2$ we obtain
$$
\sum_{n_1=0}^{N^\alpha}\sum_{n_2=n_1}^{n_1 + N^\alpha}|E_{n_1,n_2}(x_0)| 
\leq C N^{2 \alpha} = CN^{\beta} 
=o( N^{\beta +1} ). 
$$

Finally, assume that  $n_1 \leq N^{\alpha}$ and $n_2 - n_1 > N^\alpha.$
By \eqref{MDE2}, we can assume that $S_{n_1} - S_{n_0} \leq N^{1/2 + \delta}$. 
Then \eqref{eqa1} still holds and we
conclude that
\begin{equation}
\label{eq:smalln1}
|E_{n_1, n_2}(x_0)| \leq C (n_2-n_1)^{\frac{\beta -1}{2} + \eps}.
\end{equation}
 Summing for $n_2  = n_1+N^{\alpha}, ..., N$ and multiplying by $N^\alpha$, we obtain
the same error term as in \eqref{eq:smalln}.

This completes the proof.

\subsection{Proof of Proposition \ref{lemma:weak} for $d\geq 2$}
\label{SSVar-2}

In dimension $d$, we have
\begin{equation}
\label{disder2}
\sup_{x \in \integers^d}|\nabla_{i_1} ... \nabla_{i_k} H (n,x)| \leq c n^{-\frac{k+d}{2}}.
\end{equation}
for any $i_1,...i_k =1,...,d$, where $\nabla_i$ denotes the discrete derivative 
with respect to $x_i$, the $i$-th component of $x$.
We apply a similar approach to the case
$d=1$. That is, we perform summations by parts to estimate 
$g_1$ and $g_2$ defined by \eqref{DefG1} and \eqref{DefG2}.
Each time we need $d$ summations by parts. For example, if $d=2$, then
with $m = n_2 - n_{1}$,
\begin{gather}
g_{1,m}(0) = \sum_{|x|< m^{1/2+\delta}} \sum_{|y|<m^{1/2+\delta}} H(m, (x,y)) F(x,y)\nonumber \\
\approx \sum_{|x|< m^{1/2+\delta}} \sum_{|y|<m^{1/2+\delta}}
\nabla_{2} H(m,(x,y)) I_1(x,y) \nonumber \\
\approx  \sum_{|x|< m^{1/2+\delta}} \sum_{|y|<m^{1/2+\delta}}
\nabla_{1} \nabla_2 H(m,(x,y)) I(x,y) \label{eq:2dimcomp}
\end{gather}
where 
$$I_1(x,y) = \sum^y_{z=-m^{1/2 + \delta}} F(x,z) \text{ and }
I(x,y) = \sum^x_{w=-m^{1/2 + \delta}} 
\sum^y_{z=-m^{1/2 + \delta}} F(w,z)$$
and $a_m \approx b_m$ means that the $a_m - b_m$ is superpolynomially small in $m$. 
Recalling that $F \in \bG_{1/\eps}^{\beta}$ and $\brF=0$, we have
$|I(x,y)| \leq Cm^{(1/2 + \delta)2\beta}$. Substituting this estimate and \eqref{disder2}
into \eqref{eq:2dimcomp}, we find
$$
|g_{1,m}(0)| \leq \sum_{|x|< m^{1/2+\delta}} \sum_{|y|<m^{1/2+\delta}} C m^{-2}m^{\beta + 2\delta \beta}
\leq Cm^{\beta - 1 + \eps}.
$$
assuming $2 \delta (1 + \beta) < \eps$.
Using
that $|I(x,y)| \leq C m^{\beta(\frac12 + \delta)}$, we find
$$
|g_{1,m}(0)| \leq Cm^{\beta - 1 + \eps}.
$$

To simplify formulas, we will use the notation
$$
a_N \lesssim b_N \text{ if } a_N \leq C b_N N^{\eps}.
$$

\ignore{
Set 
\begin{equation}
\label{defalpha1}
\alpha_1 = \frac{\beta}{2-\beta}
\end{equation}
Proceeding as in case $d=1$, we find that  if $\gamma > (2 - \beta) / \beta$, then for all 
$n_1, n_2$ satisfying $n_1>N^{\alpha_1}, $
$n_2 - n_{1} \geq N^{\alpha_1}$  we have

\begin{equation}
\label{KSK3d2}
 \sup_{x_0 \in \integers^d :|x_0|\leq N^{1/2+\delta} } |E_{n_1, n_2}(x_0)| \lesssim
 \sum_{j = 0}^d n_1^{\frac{d\beta -j}{2}}
(n_2 - n_{1})^{\frac{d\beta -2d + j}{2}} .
\end{equation}
Since the proof of \eqref{KSK3d2} 
is similar to that of \eqref{KSK3}, we only mention the main
difference. That is, now $j$ can take values $0,1,...,d$ and in dimension $d=1$ 
it could only take values $0,1$. This follows
from the fact that when applying $d$ summtions by parts to the function 
$H(n_1-n_{0}, x_1 - x_{0}) g_1(x_1)$, we obtain
$$
\nabla_1 ... \nabla_d (Hg) = 
\sum_{\{i_1,...,i_I\} \subset \{ 1,...,d\}} 
(\nabla_{i_1} ... \nabla_{i_I} H) (\nabla_{j_1}...\nabla_{j_J} g )
$$
where $\{j_1,...,j_J\} =  \{ 1,...,d\} \setminus \{i_1,...,i_I\}$.
Later we will need the following slight extension of \eqref{KSK3d2}.}

\begin{lemma}
\label{lemma:biggamma}
For any $a \in (0,1]$, if
$F \in \bG_{1/a}^\beta$, 
then for all 
$n_1, n_2$ satisfying $n_1 \geq N^{a}, $ 
$n_2 - n_{1} \geq N^{a}$  we have
\begin{equation}
\label{ManyDer}
 \sup_{x_0 \in \integers^d :|x_0|\leq N^{1/2+\delta} } |E_{n_1, n_2}(x_0)| \lesssim
 \sum_{j = 0}^d n_1^{\frac{d\beta -j}{2}}
(n_2 - n_{1})^{\frac{d\beta -2d + j}{2}} .
\end{equation}
\end{lemma}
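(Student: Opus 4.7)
The plan is to follow the same strategy used to establish \eqref{KSK3} in $d=1$ and its announced generalization to $d\ge 2$ (sketched via the $2^d$-term Leibniz expansion above), but now tracking the dependence on the parameter $a$ so that the argument goes through under the weaker range hypothesis $F\in\bG^\beta_{1/a}$ (rather than only $F\in\bG^\beta_{(2-\beta)/\beta}$). As before I would introduce
\[
g_1(x_1)=\sum_{x_2\in\integers^d} H(n_2-n_1,x_2-x_1)F(x_2),\qquad
g_2(x_0)=\sum_{x_1\in\integers^d}H(n_1,x_1-x_0)g_1(x_1)F(x_1),
\]
so that $E_{n_1,n_2}(x_0)=g_2(x_0)$, and use the moderate-deviation bound \eqref{MDE2} to restrict all summations to cubes of side $m^{1/2+\delta}$ around the base point, at the cost of an additive error that is superpolynomially small in $m$.

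The core step is a multidimensional analogue of Lemma~\ref{lem:KS}: perform $d$ successive summations by parts, one in each coordinate direction. Each summation by parts transforms one copy of $F$ into a one-dimensional partial sum; after all $d$ are done, $F$ is replaced by its $d$-fold partial sum $I(z)=\sum_{w\le z}F(w)$ over a corner of a box of side $\lesssim m^{1/2+\delta}$, and $H$ is hit by $\nabla_1\cdots\nabla_d$ (with possible redistribution of some derivatives onto the accompanying slowly varying factor $g$, via the Leibniz rule). The hypothesis $F\in\bG^\beta_{1/a}$, combined with $n_1,n_2-n_1\ge N^a$ and $|x_0|\le N^{1/2+\delta}$, guarantees that every such box sits at distance $\le N^{1/2+\delta}\le (m^{1/2+\delta})^{1/a}$ from the origin, which is precisely the range in which the $\bG^\beta_{1/a}$ bound $|I(z)|\lesssim m^{d\beta/2}$ applies. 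Combined with the derivative bound \eqref{disder2} this yields, for $g_1$, estimates of the form $\sup|\nabla_{i_1}\cdots\nabla_{i_k}g_1|\lesssim (n_2-n_1)^{(d\beta-2d+k)/2}$ for $k=0,\dots,d$.

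I would then feed these bounds into the same summation-by-parts machinery applied to $g_2$. Expanding
\[
\nabla_1\cdots\nabla_d\bigl(H(n_1,x_1-x_0)\,g_1(x_1)\bigr)=\sum_{I\subset\{1,\dots,d\}}(\nabla_I H)(\nabla_{I^c}g_1),
\]
each subset $I$ of size $j$ produces a factor $n_1^{-(d+j)/2}$ from $\nabla_I H$ and $(n_2-n_1)^{(d\beta-2d+(d-j))/2}=(n_2-n_1)^{(d\beta-d-j)/2}$ from $\nabla_{I^c}g_1$, while the global partial sum of $F$ that remains after the $d$ summations by parts on the $x_1$-side contributes $n_1^{d\beta/2}$. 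Gathering the factors coordinate-by-coordinate and taking suprema over $|x_0|\le N^{1/2+\delta}$, one arrives at exactly the sum $\sum_{j=0}^d n_1^{(d\beta-j)/2}(n_2-n_1)^{(d\beta-2d+j)/2}$ claimed in \eqref{ManyDer} (up to the universal factor $N^\eps$ hidden in $\lesssim$).

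The main obstacle, in my view, is bookkeeping rather than a genuine difficulty: one has to check carefully that at every stage of the two successive $d$-fold summations by parts, the base points and the scales remain compatible with the $\bG^\beta_{1/a}$ averaging range, i.e.\ that shifting by $\pm m^{1/2+\delta}$ never pushes us outside the window $|z|\le L^{1/a}$ where the averaging bound is valid; this is where the exponent $\gamma=1/a$ is exactly used and is why the lemma is stated at this level of generality. Once this range check is in place, the estimate follows by mechanically applying Lemma~\ref{lem:KS}'s multidimensional analogue twice and summing over the subsets $I$ with $|I|=j$.
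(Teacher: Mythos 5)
Your strategy is the same as the paper's: the paper proves Lemma \ref{lemma:biggamma} exactly by rerunning the $d=1$ summation-by-parts argument for $g_1$ and $g_2$, using the Leibniz expansion $\nabla_1\cdots\nabla_d(Hg)=\sum_{I\subset\{1,\dots,d\}}(\nabla_I H)(\nabla_{I^c}g)$ (which is why $j$ ranges over $0,\dots,d$), and observing that all boxes that occur have side $\gtrsim N^{a(1/2+\delta)}$ with centers within $O(N^{1/2+\delta})$ of the origin, so that after replacing $F(\cdot)$ by $F(\cdot-x_0)$ only the $\gamma=1/a$ range in the definition of $\bG^\beta_\gamma$ is needed; your final paragraph is precisely this range check.

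However, there is a bookkeeping error in your exponents which, as written, does not yield \eqref{ManyDer}. Applying $d$ further summations by parts and \eqref{disder2}, $k$ discrete derivatives on $g_1$ give $\sup|\nabla_{i_1}\cdots\nabla_{i_k}g_1|\lesssim (n_2-n_1)^{(d\beta-d-k)/2}$, a bound that decreases in $k$ (for $d=1$ this is \eqref{eqa1} and \eqref{eqb1}); your formula $(n_2-n_1)^{(d\beta-2d+k)/2}$ increases in $k$ and already fails at $k=0$, where the correct bound is $(n_2-n_1)^{d(\beta-1)/2}$, matching the paper's computation of $g_{1,m}(0)$ for $d=2$. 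Consequently, in the Leibniz term with $|I|=j$ derivatives on $H$, the remaining $d-j$ derivatives on $g_1$ contribute $(n_2-n_1)^{(d\beta-2d+j)/2}$, not $(n_2-n_1)^{(d\beta-d-j)/2}$ as you state. If one gathers your factors literally (including $n_1^{d\beta/2}$ from the $d$-fold partial sum of $F$ and $n_1^{-j/2}$ from summing $|\nabla_I H(n_1,\cdot)|$ over a box of side $n_1^{1/2+\delta}$), one obtains $\sum_{j=0}^d n_1^{(d\beta-j)/2}(n_2-n_1)^{(d\beta-d-j)/2}$, whose $j=0$ term exceeds every term on the right-hand side of \eqref{ManyDer} once $n_1,\,n_2-n_1\geq N^a$ are large; so the concluding claim that one ``arrives at exactly the sum'' does not follow from the estimates you state. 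The fix is mechanical: correct the derivative count for $g_1$, after which the $j$-th Leibniz term becomes $n_1^{(d\beta-j)/2}(n_2-n_1)^{(d\beta-2d+j)/2}$ and \eqref{ManyDer} follows, in full agreement with the paper's argument.
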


\begin{proof}
Since the proof of the lemma
is similar to that of \eqref{KSK3}, we only mention the main
difference. That is, now $j$ can take values $0,1,...,d$ and in dimension $d=1$ 
it could only take values $0,1$. This follows
from the fact that when applying $d$ summations by parts to the function 
$H(n_1-n_{0}, x_1 - x_{0}) g_1(x_1)$, we obtain
$$
\nabla_1 ... \nabla_d (Hg) = 
\sum_{\{i_1,...,i_I\} \subset \{ 1,...,d\}} 
(\nabla_{i_1} ... \nabla_{i_I} H) (\nabla_{j_1}...\nabla_{j_J} g )
$$
where $\{j_1,...,j_J\} =  \{ 1,...,d\} \setminus \{i_1,...,i_I\}$.

In the proof of \eqref{KSK3}, we only 
used the definition of $\bG_\gamma^\beta$ for boxes with side length 
$L \geq N^{1/2 + \delta}$ 
(specifically in deriving \eqref{gammaapp}). 
In order to extend that proof to the present setting, we only need to replace 
$F(.)$ by $F(. - x_0)$. 
Thus assuming $\gamma > 1/a$ and using
the definition of $\bG_\gamma^\beta$, we can 
repeat the previous proof.
\end{proof}

Next we show that the extreme terms in the right hand side of \eqref{ManyDer} provide 
the main contribution.

\ignore{We can rewrite the right hand side of \eqref{KSK3d2} as
\begin{equation}
\label{aprioriest}
\sum_{j=0}^d m_1^{\frac{d \beta - j}{2} }m_2^{\frac{d \beta - 2d  + j}{2} }
=: F_{m_1,m_2},
\end{equation}
where $m_2 = n_2- n_{1}$, $m_1=n_1.$

We will also use the following }

\ignore{
\begin{lemma}
\label{lem:ext}
For any $a,b>0$, $0 \leq c \leq b$ and for any positive numbers $m_1,m_2$, we have
\begin{equation}
\label{MExtreme}
m_1^{a-c}m_2^{a-b+c} + m_1^{a-b +c}m_2^{a-c} \leq 
m_1^{a}m_2^{a-b} + m_1^{a-b}m_2^{a} 
\end{equation}
\end{lemma}

\begin{proof}[Proof of Lemma \ref{lem:ext}]
Dividing both sides of \eqref{MExtreme} by $m_1^{a-b} m_2^a,$ we see that \eqref{MExtreme} is equivalent to
\begin{equation}
\label{M-X}
x^{b-c}+x^c\leq x^b+1
\end{equation}
where $x=m_1/m_2.$ Transferring all terms to the RHS we get
$$ 0\leq x^b-x^c-x^{b-c}+1=(x^c-1)(x^{b-c}-1).$$
Since both $c$ and $b-c$ are non-negative,
either we have $x=1$ and $x^c = x^{b-c} = 1$, or 
$x \in \reals_+ \setminus \{1\}$ and 
$0 < (x^c-1)(x^{b-c}-1)$
\end{proof}

First we prove that for any positive number $x$,
\begin{equation}
\label{ineqj}
x^{c-b} + x^{-c} \leq x^{-b} + 1
\end{equation}
Indeed, if $c=0$ or $x=1$, we get an equality. 
Now assume $c>0$. If $x<1$, then 
we rewrite \eqref{ineqj} as
\begin{equation}
\label{ineqj2}
x^{c-b} - x^{-b} \leq 1 - x^{-c}
\end{equation}
and then divide \eqref{ineqj2} by $1-x^{-c}$ to obtain $x^{c-b} \geq 1$, which is true
since $c < b$.
If $x>1$, then dividing \eqref{ineqj2} by $1-x^{-c}$, we obtain $x^{c-b} \leq 1$ which is also true.
Multiplying the inequalities \eqref{ineqj} for $x= m_1$ and $x=m_2$ and subtracting the inequality \eqref{ineqj}
for $x= m_1m_2$ we obtain the lemma.
\end{proof}}

Set $m_1=n_1,$ $m_2=n_2-n_1.$ 
 By Lemma \ref{lemma:biggamma}, we have for 
$m_1\geq N^a,$ $m_2 \ge N^a$, $|x_0|\leq N^{1/2+\delta}$ that
\begin{equation}
\label{ExpCases}
 |E_{n_1, n_2}(x_0)| \lesssim
 \begin{cases} m_1^{\frac{(\beta-1)d}{2}} m_2^{\frac{(\beta-1)d}{2}}
  & \text{if }m_2\geq m_1 \\[4pt]
  m_1^{\frac{\beta d}{2}} m_2^{\frac{d\beta-2d}{2}} & \text{if } m_2<m_1. 
 \end{cases}
\end{equation}
Note that the second bound is quite bad if $m_2\ll m_1.$ However we can improve it
by bootstrap. Namely we have

\begin{lemma}
If $N^a<m_2<m_1$ 
then 
$$ |E_{n_1, n_2}(x_0)| \lesssim m_2^{(\beta-1) d}$$
\end{lemma}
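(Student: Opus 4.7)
The plan is to exploit the Markov property of the random walk at time $n_1 - m_2$ in order to ``restart'' the two-point correlation estimate at a smaller time scale. Concretely, set $n' := n_1 - m_2$ (which is $\geq 0$ since $m_2 < m_1$) and condition on $S_{n'}$:
\begin{equation*}
E_{n_1,n_2}(x_0) \;=\; \sum_{x \in \integers^d} \mathbb{P}_{x_0}(S_{n'} = x)\, E_{m_2,\,2m_2}(x),
\end{equation*}
where we used that starting from $x$, the new pair of times has $n_1' = m_2$ and $n_2' = n'_1 + (n_2 - n_1) = 2m_2$. The point is that in the right-hand side, both the first time and the gap are equal to $m_2$, so we are in the ``balanced'' regime $m_1' = m_2' = m_2$.

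First I would dispose of the tails in the $x$-sum. Since $|x_0| \leq N^{1/2+\delta}$ and the walk makes at most $N$ steps to reach time $n'$, the moderate deviation estimate \eqref{MDE2} (applied to $S_{n'} - x_0$) yields $\mathbb{P}_{x_0}(|S_{n'}| > N^{1/2 + 2\delta}) \lesssim N^{-1/\delta}$, so the contribution of those $x$'s to the sum is super-polynomially small (using $|E_{m_2,2m_2}(x)| \leq \|F\|_\infty^2$). After trivially enlarging $\delta$ (and correspondingly shrinking the $\delta$ in the proposition), we may therefore restrict the sum to $|x| \leq N^{1/2+\delta}$.

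Next, on the restricted range, I would apply the first case of \eqref{ExpCases} from Lemma \ref{lemma:biggamma} to $E_{m_2,2m_2}(x)$: both $m_1'$ and $m_2'$ equal $m_2 \geq N^a$, so the hypotheses are met, and since $m_2' \geq m_1'$ the bound reads
\begin{equation*}
\sup_{|x| \leq N^{1/2+\delta}} |E_{m_2,2m_2}(x)| \;\lesssim\; m_2^{(\beta-1)d/2}\, m_2^{(\beta-1)d/2} \;=\; m_2^{(\beta-1)d}.
\end{equation*}
Since $\sum_x \mathbb{P}_{x_0}(S_{n'}=x) \leq 1$, pulling this sup out of the sum yields $|E_{n_1,n_2}(x_0)| \lesssim m_2^{(\beta-1)d}$, which is the desired estimate.

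The only real subtlety is making sure the upgrade does not cost anything in the admissible starting set: one must verify that after restarting at $x = S_{n'}$, the new starting point still lies in the range where Lemma \ref{lemma:biggamma} (via Lemma \ref{lem:KS}) applies. I expect this to be routine because we may choose the $\delta$ in the statement of Proposition \ref{lemma:weak} small enough that the inflated exponent $1/2 + 2\delta$ still satisfies the hypotheses of Lemma \ref{lem:KS}; no step requires anything beyond the already-established tools.
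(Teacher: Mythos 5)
Your proposal is essentially the paper's argument: the paper also restarts the walk at time $k=m_1-m_2$ (your $n'$), writes $E_{n_1,n_2}(x_0)=\sum_y H(k,y-x_0)E_{m_2,2m_2}(y)$, discards the tail $|y|>2N^{1/2+\delta}$, and applies the balanced case of \eqref{ExpCases} to $E_{m_2,2m_2}(y)$, so the core idea and the conclusion match.

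The one point where you are slightly imprecise is the tail estimate when $n'$ is very small. The paper splits into two cases: if $m_1\le 2m_2$ it does not condition at all (the second line of \eqref{ExpCases} already gives $m_1^{\beta d/2}m_2^{(\beta-2)d/2}\lesssim m_2^{(\beta-1)d}$), and it only performs the restart when $m_1>2m_2$, which forces $k=m_1-m_2>m_2\ge N^a$; then \eqref{MDE}--\eqref{MDE2}, which are bounds in terms of the time variable, give a decay $k^{-1/\eta}\le N^{-a/\eta}$ that beats any prescribed power of $N$. In your version $n'$ can be as small as $1$, and \eqref{MDE2} applied at time $n'$ does not by itself yield smallness in terms of $N$ (e.g.\ for $n'=1$ it gives a constant). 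The needed bound $\Prob_{x_0}(|S_{n'}-x_0|>N^{1/2+\delta})\lesssim N^{-r}$ uniformly over $1\le n'\le N$ is nevertheless true under the standing assumption $\E(|X_1|^p)<\infty$ for all $p$, by Markov's inequality with a high moment (or by adopting the paper's case split, which sidesteps the issue entirely). With that justification added, your proof is complete and coincides with the paper's.
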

\begin{proof}
If $m_1\leq 2 m_2$ then the result follows from \eqref{ExpCases}. If $m_1>2 m_2$, 
let $k=m_1-m_2$
and note that $k > m_2$ and
$$ E_{n_1, n_2}(x_0)=\sum_{y\in \Z^d} H(k, y-x_0) E_{m_2, 2 m_2}(y). $$
The sum of the terms where $|y|>2 N^{1/2+\delta}$ decays faster than $N^{-r}$ 
for any $r$. The terms where $|y|\leq 2 N^{1/2+\delta}$ can be estimated 
by \eqref{ExpCases} with $m_1=m_2$ giving the result.
\end{proof}
We now combine the foregoing results in different regimes in case where $a=\eps.$ Then if 
$\gamma = 1/\eps$,
$F\in \bG_\gamma^\beta$ we gather that
\begin{equation}
\label{ExpCases2}
 |E_{n_1, n_2}(x_0)| \lesssim
 \begin{cases} m_1^{\frac{(\beta-1)d}{2}} m_2^{\frac{(\beta-1)d}{2}}
  & \text{if }m_2\geq m_1\geq N^\eps,\\[4pt]
m_2^{(\beta-1)d} & \text{if } m_1>m_2\geq N^\eps, \\[4pt]
1 & \text{if } \min(m_1, m_2)<N^\eps. 
 \end{cases}
\end{equation}
Summing the bounds of \eqref{ExpCases2} for $m_1, m_2\in \{1\dots N\}$ we obtain 
Proposition \ref{lemma:weak}.

\section{SLLN in dimension 1.}
\label{ScD=1}
\subsection{Reduction to occupation times sum.}
Here we prove Theorem \ref{ThD=1}.

Let $\ell_n(x)$ be time spent by the walker at site $x$ before time $n$. Set
$\DS \ell_\infty(x):= \lim_{n\to \infty} \ell_n(x).$
Thus $\ell_\infty(x)$ is the total time spent by the walker at site $x.$

\begin{lemma} \label{lem1}
  There exist $C,c>0,$ $\fp\in (0,1),$ $\eps_1$ such that for all $x \in \naturals$ and $m \in \naturals$
\begin{equation} \label{LTGeom}  
  \Prob( \ell_\infty(x) > m) < Ce^{-cm}. \end{equation}
  Furthermore 
  \begin{equation} \label{InEqRen}
  \left|\EXP (\ell_\infty(x))-\frac{1}{v}\right|\leq  \frac{C}{x^{\eps_1}},
  \quad \left|\Prob(\ell_\infty(x)=0)-\fp\right| \leq  \frac{C}{x^{\eps_1}}.
  \end{equation}
\end{lemma}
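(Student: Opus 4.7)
My plan is to split the proof into three parts: the geometric tail bound, the identity relating the expectation and the zero-probability, and a quantitative renewal estimate.

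First, for the geometric tail bound \eqref{LTGeom}: since $v = \E(X_1) > 0$, the strong law of large numbers gives $S_n \to +\infty$ almost surely, so the probability that the walk never returns to its starting point is
$$q := \P_0(S_n \neq 0 \text{ for all } n \geq 1) > 0.$$
By translation invariance of the step distribution, the probability that the walk never revisits $x$ starting from $x$ is also $q$. Applying the strong Markov property at successive hitting times of $x$ shows that, conditionally on $\ell_\infty(x) \geq 1$, the quantity $\ell_\infty(x)$ is geometrically distributed on $\{1,2,\dots\}$ with success parameter $q$. This yields
$$\P(\ell_\infty(x) \geq m+1) \leq (1-q)^m,$$
which gives \eqref{LTGeom} with $c = -\log(1-q)$.

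Second, the same geometric structure yields the exact identity
$$\E(\ell_\infty(x)) = (1-\fp_x)/q, \qquad \fp_x := \P(\ell_\infty(x) = 0).$$
Consequently, setting $\fp := 1 - q/v$, one has
$$|\fp_x - \fp| = q \, |\E(\ell_\infty(x)) - 1/v|,$$
and both halves of \eqref{InEqRen} reduce to proving $|\E(\ell_\infty(x)) - 1/v| \leq C x^{-\eps_1}$ for some $\eps_1 > 0$.

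Third, this is a quantitative renewal estimate for $\E(\ell_\infty(x)) = \sum_{n \geq 0} \P_0(S_n = x)$; convergence to $1/v$ is Blackwell's theorem, and to extract a polynomial rate I would split the sum at $|n - x/v| = x^{1-\delta}$. On the main range, the local limit theorem (Theorem \ref{thm:llt}, applied with $\alpha = 2$ when $\beta > 2$ and $\alpha = \beta$ when $\beta \in (1,2]$) gives
$$\P_0(S_n = x) = n^{-1/\alpha} g((x-nv)/n^{1/\alpha}) (1 + o(1)),$$
and the associated Riemann sum approximates $\int n^{-1/\alpha} g((x-nv)/n^{1/\alpha})\, dn = 1/v$ with a polynomially small error in $x$. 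On the complementary range, the tail assumption $\P(|X_1|>t) \leq C/t^\beta$ combined with a Fuk--Nagaev-type moderate deviation bound controls the contribution by $O(x^{-\eps_1})$ for a suitable $\eps_1 > 0$ depending on $\beta$.

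The main obstacle is obtaining this polynomial renewal rate under the minimal tail hypothesis $\beta > 1$, which allows infinite variance when $\beta \in (1,2]$ and hence rules out direct appeals to Stone-type expansions that require higher moments. The LLT supplies the leading behavior, but one has to track the LLT error term in concert with the heavy-tailed moderate-deviation bound to produce a single exponent $\eps_1$. Once this estimate is in hand, the second step automatically upgrades it to the matching bound on $\fp_x$, finishing the proof.
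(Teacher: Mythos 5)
Your first two steps are correct and essentially the paper's own: the geometric bound \eqref{LTGeom} is obtained there by exactly your argument (conditioned on $\ell_\infty(x)\neq 0$, the number of visits is geometric with parameter $1-\fp_0$, where $\fp_0<1$ is the return probability, positive drift giving transience), and your identity $\E(\ell_\infty(x))=(1-\fp_x)/q$ is a clean way to reduce the second estimate in \eqref{InEqRen} to the first.

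The gap is in your third step. The hypotheses of Theorem \ref{ThD=1} impose only the tail envelope $\P(|X_1|>t)\le C/t^\beta$ with $\beta>1$; this is an upper bound, not a regular-variation assumption, so for $\beta\in(1,2]$ the step distribution need not lie in the (normal) domain of attraction of any stable law --- the tails may oscillate beneath $t^{-\beta}$, the variance may be finite or infinite, and no local limit theorem of the form of Theorem \ref{thm:llt} is available. Your prescription ``$\alpha=\beta$ when $\beta\in(1,2]$'' is therefore unjustified, and the main-range Riemann-sum computation has nothing to start from. Even in the finite-variance regime $\beta>2$, Theorem \ref{thm:llt} carries only a $o(1)$ uniform error with no rate, so the window-splitting argument yields convergence of $\E(\ell_\infty(x))=\sum_n\P(S_n=x)$ to $1/v$ (i.e.\ Blackwell's theorem) but not the polynomial bound $Cx^{-\eps_1}$ that the lemma requires; to extract a rate this way you would need an LLT with explicit remainder, hence extra moment assumptions beyond the hypothesis. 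The paper sidesteps the whole issue by invoking the quantitative renewal theorem of Rogozin \cite{Rog73}, which gives $|\E(\ell_\infty(x))-1/v|\le Cx^{-\eps_1}$ assuming only a finite moment of some order larger than $1$ (which the tail bound supplies). So either quote such a quantitative renewal estimate, or replace the LLT-based main-range analysis by an argument that uses only $\E|X_1|^{1+\delta}<\infty$; as written, the key estimate of the lemma is not actually proved.
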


\begin{proof}
\eqref{InEqRen} follows from quantitative renewal theorem \cite{Rog73}. 
\eqref{LTGeom} holds since for $k\geq 1,$ 
$\DS \Prob(\ell_\infty(x)=k)=\Prob(\ell_\infty(x)\neq 0)\;\fp_0^{k-1} (1-\fp_0)$
where $\fp_0$ is the probability that $S_n$ returns to the origin at some positive moment of time. 
\end{proof}

Let $\DS \tT_N=\sum_{x=1}^N \ell_\infty(x) F(x).$ We will show that
with probability 1
\begin{equation}
\label{LLNOcc}
\frac{\tT_N}{N}\to \frac{\brF_+}{v}. 
\end{equation}

We first deduce Theorem \ref{ThD=1} from \eqref{LLNOcc} and then prove \eqref{LLNOcc}.
Denote $\DS \bbL_N=\sum_{x=1}^N \ell_\infty(x).$ By the strong law of large numbers
for~$S_N$

\begin{equation}
\label{LLNLocTime}
\frac{\bbL_N}{N}\to \frac{1}{v}.
\end{equation}
On the other hand for each $\eps$  and for almost every $\omega$,
there is some $N_0 = N_0(\eps, \omega)$ so that for all $N>N_0$,
$$ \left|T_N-\tT_{Nv(1-\eps)}\right|\leq ||F||_\infty\left(\bbL^-+[\bbL_{Nv(1+\eps)}-\bbL_{Nv(1-\eps)}]\right),$$
where $\DS \bbL^-=\sum_{x=-\infty}^0 \ell_\infty(x)$ is the total time spent on the negative halfline.
In view of \eqref{LLNLocTime},
$ \frac{T_N-\tT_{Nv(1-\eps)}}{N} $ can be made as small as we wish by taking $\eps$ small. Hence
Theorem \ref{ThD=1} follows from \eqref{LLNOcc}.

In order to prove \eqref{LLNOcc} we observe that by Lemma \ref{lem1}
$$ \EXP\left(\frac{\tT_N}{N}\right)=\frac{1}{N}\sum_{x=1}^N F(x) \EXP(\ell_\infty(x))= 
\frac{1}{v} \left[\frac{1}{N} \sum_{x=1}^{N}  F(x)\right]+O\left(N^{-\eps_1}\right)=
\frac{\brF_+}{v} + o(1), $$
as $N\to \infty$.

We need the following bound, which will be proved in \S \ref{SSCovOcc}.

\begin{lemma}
\label{LmLTCov}
There are constants $C$ and $\eps_2$ such that
for each $n_1<n_2$
$$\left|\Cov(\ell_\infty(n_1), \ell_\infty(n_2))\right|\leq 
C\left(\frac{1}{n_1^{\eps_2}}+\frac{1}{(n_2-n_1)^{\eps_2}}\right). $$
\end{lemma}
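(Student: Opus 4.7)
The plan is to decouple the two local times by conditioning on the first passage of $S$ above a midpoint, and then to apply the strong Markov property to bound the resulting covariances. Set $m:=\lfloor(n_1+n_2)/2\rfloor$ and $\tau:=\inf\{n\ge 0:S_n\ge m\}$, which is finite almost surely thanks to the positive drift. Decompose $\ell_\infty(n_1)=A+A'$, where $A:=\ell_\tau(n_1)$ is $\cF_\tau$-measurable and $A':=\sum_{n\ge\tau}\mathbf{1}_{S_n=n_1}$. Since $n_2\ge m$ and the walker has not reached level $m$ before time $\tau$, it cannot have visited $n_2$, so $\ell_\infty(n_2)=B':=\sum_{n\ge\tau}\mathbf{1}_{S_n=n_2}$. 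Consequently
$$\Cov(\ell_\infty(n_1),\ell_\infty(n_2))=\Cov(A,B')+\Cov(A',B'),$$
and I will bound each term separately.

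For $\Cov(A,B')$, I will use the strong Markov property at $\tau$ together with translation invariance of $S$ to write $\E[B'\mid\cF_\tau]=u(S_\tau)$, with $u(x):=\E_0[\ell_\infty(n_2-x)]$. On the ``typical'' event $\{S_\tau-m\le(n_2-n_1)/4\}$, one has $n_2-S_\tau\ge(n_2-n_1)/4$, so \eqref{InEqRen} gives $u(S_\tau)=1/v+O((n_2-n_1)^{-\eps_1})$. The complementary large-overshoot event has polynomially small probability in $n_2-n_1$ by a standard first-passage overshoot estimate for random walks with positive drift and tail $\P(|X_1|>t)\le Ct^{-\beta}$, $\beta>1$; there we only use the uniform bound $u\le C$. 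Combined with the second moment bound $\E[A^2]=O(1)$ coming from \eqref{LTGeom}, the identity $\Cov(A,B')=\E[A\,(u(S_\tau)-\E u(S_\tau))]$ split on the overshoot event yields $|\Cov(A,B')|\le C(n_2-n_1)^{-\eps_2}$. For $\Cov(A',B')$ I will apply Cauchy--Schwarz, using $\E[B'^2]=O(1)$ from \eqref{LTGeom}; the event $\{A'\neq 0\}$ forces the post-$\tau$ walk to descend by at least $(n_2-n_1)/2$ from $S_\tau$, which has polynomially small probability by a ruin-type bound for a heavy-tailed walk with positive drift, and another application of \eqref{LTGeom} gives $\E[A'^2]\le C(n_2-n_1)^{-\eps_2}$, hence $|\Cov(A',B')|\le C(n_2-n_1)^{-\eps_2/2}$.

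The main obstacle is that we only have the polynomial tail $\beta>1$, so no Chernoff-type bound is available; both the overshoot distribution at $\tau$ and the downward-ruin estimate for the heavy-tailed walk with positive drift must be done carefully, and it is their polynomial decay rates that feed the exponent $\eps_2$. Note that the argument actually produces the sharper estimate $|\Cov(\ell_\infty(n_1),\ell_\infty(n_2))|\le C(n_2-n_1)^{-\eps_2}$, which trivially implies the bound stated in the lemma with the additional slack term $C n_1^{-\eps_2}$.
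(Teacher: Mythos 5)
Your argument is correct, but it follows a genuinely different route from the paper's. The paper reduces the problem to an abstract three-state Markov chain on the visits to $\{n_1,n_2\}$ plus an absorbing "escape" state, proves a general covariance bound for such chains (Lemma \ref{LmLTMC}), and then estimates the chain's parameters $\pi_1,\pi_2,q_1/(q_1+\eta_1),q_2$ by the quantitative renewal theorem \eqref{InEqRen} (Rogozin) at \emph{both} sites and by the descent bound of Lemma \ref{LmLDMin} (Veraverbeke); the comparison of these parameters to their renewal limits is exactly what produces the $n_1^{-\eps_2}$ error term in the statement. You instead condition at the first passage $\tau$ over the midpoint, note that all of $\ell_\infty(n_2)$ and most of $\ell_\infty(n_1)$ are separated by $\tau$, and use the strong Markov property: $\E[B'\mid\cF_\tau]=u(S_\tau)$ is nearly constant by \eqref{InEqRen} once the overshoot is controlled, while the backtracking term $A'$ is killed by the descent estimate (your "ruin-type bound" is precisely Lemma \ref{LmLDMin}, i.e.\ Veraverbeke with $\eps_3=\beta-1$) together with the geometric tail \eqref{LTGeom}. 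The one ingredient you use that the paper does not is the uniform polynomial tail bound for the first-passage overshoot $S_\tau-m$; this is indeed standard under $\Prob(|X_1|>t)\le Ct^{-\beta}$, $\beta>1$ (e.g.\ via ascending ladder heights, whose tail is comparable to the step tail, and boundedness of the ladder renewal measure on unit intervals), and is in the same spirit as the renewal-theoretic inputs the paper already cites. What each approach buys: yours is structurally more elementary (no auxiliary chain lemma), avoids any renewal asymptotics at the site $n_1$, and consequently yields the sharper bound $C(n_2-n_1)^{-\eps_2}$ with no $n_1^{-\eps_2}$ term (which trivially implies the stated lemma); the paper's route needs only estimates already established in Section \ref{ScD=1} plus the self-contained Lemma \ref{LmLTMC}, at the price of the extra, harmless $n_1^{-\eps_2}$ contribution. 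Two small points to make explicit if you write this up: the case of bounded $n_2-n_1$ should be absorbed into the constant (your typical-overshoot event is vacuous when $n_2-n_1$ is of order one), and the uniform bound $\E[\ell_\infty(x)^2]\le C$ for \emph{all} $x\in\Z$ (not just $x\in\naturals$ as in \eqref{LTGeom}) follows from the same geometric-return argument used in Lemma \ref{lem1}.
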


Lemma \ref{LmLTCov} implies that 
$$\Var\left(\frac{\tT_N}{N}\right)\leq \frac{C}{N^{\eps_2}}$$
and so
$$ \Prob\left(\frac{|\tT_N-\EXP(\tT_N)|}{N}\geq \delta\right)\leq \frac{C}{\delta^2 N^{\eps_2}}. $$
Set $r=2/\eps_2.$ By Borel-Cantelli Lemma 
$$\frac{\tT_{n^r}}{n^r}\to \frac{\brF}{v} \text{ as } n\to \infty $$
almost surely. On the other hand,
\eqref{LTGeom} and the Borel-Cantelli Lemma imply that, 
with probability 1, for all sufficiently large $x,$ $\ell_\infty(x)\leq \ln^2 x.$ 
Given $N,$ take $n$ such that
$n^r \leq N<(n+1)^r.$ Then
$$ \left| \tT_N-\tT_{n^r} \right| \leq ||F||_\infty \left(\bbL_N-\bbL_{n^r}\right)
\leq C N^{(r-1)/r} \ln^2 N .$$
It follows that 
$\DS \frac{\tT_N}{N}=\frac{\tT_{n^r}}{n^r}+
o(1),
$
for $N\to\infty$, 
proving \eqref{LLNOcc}.

\subsection{Covariance of occupation times.}
\label{SSCovOcc}
The proof of Lemma \ref{LmLTCov} relies on the following estimates.

\begin{lemma} 
\label{LmLDMin}
There are constants
$C$ and $ \eps_3$ such that
for all $m \ge 1$,
$$\Prob(\min_n(S_n)\leq -m)\leq \frac{C}{m^{\eps_3}}. $$
\end{lemma}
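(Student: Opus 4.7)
The plan is to control $\min_n S_n$ via a maximal inequality for the centered martingale $W_n:=S_n-nv=\sum_{k=1}^n(X_k-v)$. The tail assumption $\Prob(|X_1|>t)\leq C/t^\beta$ with $\beta>1$ implies $\E|X_1-v|^p<\infty$ for every $p\in(1,\min(\beta,2))$; fix such a $p$. By the von Bahr--Esseen inequality, $\E|W_n|^p\leq C n$; since $W_n$ is a martingale, Doob's $L^p$ maximal inequality combined with Markov yields
$$\Prob\Bigl(\max_{k\leq n}|W_k|\geq y\Bigr)\leq \frac{C n}{y^p}\qquad (y>0).$$

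Next I would decompose dyadically in time. Write the event $\{\min_n S_n\leq -m\}$ as the union over $k\geq 0$ of
$$A_k=\bigl\{\exists\,n\in I_k:\ S_n\leq -m\bigr\},\qquad I_0=[0,m/v],\ I_k=(2^{k-1}m/v,\,2^k m/v]\text{ for }k\geq 1.$$
Since $S_n\leq -m$ is equivalent to $W_n\leq -m-nv$, on $A_k$ with $k\geq 1$ one has $|W_n|\geq m+nv\geq 2^{k-1}m$, while on $A_0$ one has $|W_n|\geq m$. Applying the maximal inequality on $\{1,\dots,2^k m/v\}$,
$$\Prob(A_k)\leq \Prob\Bigl(\max_{n\leq 2^k m/v}|W_n|\geq 2^{k-1}m\Bigr)\leq \frac{C\cdot 2^k m/v}{(2^{k-1}m)^p}\leq \frac{C'\,2^{k(1-p)}}{m^{p-1}}.$$
Since $p>1$, summing the geometric series in $k$ gives $\Prob(\min_n S_n\leq -m)\leq C''/m^{p-1}$, and the lemma holds with any $\eps_3\in(0,\min(\beta-1,1))$.

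The main technical point is the calibration of the dyadic scales so that in each shell the required lower bound on $|W_n|$ grows exponentially in $k$, producing a convergent geometric sum; this works precisely because the positive drift forces the threshold $m+nv$ to grow linearly in $n$ beyond the first shell. A minor preliminary step is observing that the power tail of $X_1$ transfers to $Y_k=X_k-v$ so that $\E|Y_k|^p<\infty$ for every $p<\beta$ and the von Bahr--Esseen estimate applies; a subtlety worth flagging is that for $\beta\leq 2$ one must take $p<\beta$ strictly (while for $\beta>2$ the Burkholder--Davis--Gundy variant with $p=2$ gives the sharper exponent $\eps_3=1$, still consistent with the claim).
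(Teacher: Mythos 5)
Your argument is correct, but it is not the route the paper takes: there this lemma is dispatched in one line, by citing Theorem 2(B) of Veraverbeke \cite{V77} on the fluctuation theory (Wiener--Hopf factors) of a random walk with positive drift, which gives the sharp exponent $\eps_3=\beta-1$ directly. Your proof is instead self-contained: center to $W_n=S_n-nv$, use von Bahr--Esseen to get $\E|W_n|^p\le Cn$ for a fixed $p\in(1,\min(\beta,2))$, apply Doob's maximal inequality, and decompose time into dyadic shells $n\asymp 2^k m/v$, on which $S_n\le -m$ forces $|W_n|\ge 2^{k-1}m$ because the drift makes the threshold $m+nv$ grow linearly; the shell probabilities $C\,2^{k(1-p)}m^{1-p}$ then sum geometrically since $p>1$. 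This yields $\Prob(\min_n S_n\le -m)\le C/m^{p-1}$, i.e.\ any $\eps_3<\min(\beta-1,1)$, which is weaker than the cited $\beta-1$ (strictly so when $\beta>2$), but the lemma as stated, and its only use downstream (Lemma \ref{LmLTCov}, where one sets $\eps_2=\min(\eps_1,\eps_3)$), require only \emph{some} positive exponent, so nothing is lost. The remaining details are routine bookkeeping: take integer block endpoints $\lfloor 2^k m/v\rfloor$ and note the shells cover all $n$. In short, your approach buys an elementary, reference-free proof at the cost of the optimal tail exponent, while the paper's citation buys the exact order $m^{1-\beta}$ with no work.
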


Lemma \ref{LmLDMin} (with $\eps_3 = \beta -1$) follows from Theorem 2(B) of \cite{V77}.

\begin{lemma}
\label{LmLTMC}
For each $\delta>0$ there is a constant $C(\delta)$ such that the following holds.
Consider a Markov chain with states $\{1, 2, 3\}$ and transition matrix
$$ \left(\begin{array}{ccc} p_1 & q_1 & \eta_1\\
q_2 & p_2 & \eta_2\\
0 & 0 & 1\end{array}\right)$$
and initial distribution $(\pi_1, \pi_2, \pi_3).$ Assume that
\begin{equation}
\label{Ell}
 q_1>\delta, \quad  \text{ and } \quad \eta_2 > \delta.
\end{equation}

Let $\fl_1$ and $\fl_2$ denote the occupation times of sites 1 and 2. Then
$$ \left|\Cov(\fl_1, \fl_2) \right|\leq C(\delta)\left( \frac{q_1}{q_1+\eta_1}(1-\pi_1)-\pi_2+q_2\right).$$
\end{lemma}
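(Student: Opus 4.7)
The plan is to prove the lemma by an explicit Green's function computation for the sub-Markov chain on the transient states $\{1,2\}$ and then algebraically match the resulting covariance formula with the claimed bound.

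Set $\pi=(\pi_1,\pi_2)$ and let $P=\begin{pmatrix}p_1 & q_1\\ q_2 & p_2\end{pmatrix}$ be the substochastic restriction of the transition matrix. Under the assumptions $q_1,\eta_2>\delta$, one has
$$\Delta:=\det(I-P)=\eta_1\eta_2+q_1\eta_2+q_2\eta_1\ge q_1\eta_2>\delta^2,$$
so $G:=(I-P)^{-1}=\Delta^{-1}\begin{pmatrix}q_2+\eta_2 & q_1\\ q_2 & q_1+\eta_1\end{pmatrix}$ has entries uniformly bounded by $\delta^{-2}$. Writing $\fl_i=\sum_{n\ge 0}\mathbf{1}_{X_n=i}$, the representation $\Prob(X_n=i)=(\pi P^n)_i$ combined with the Markov property applied at the smaller of two times yields
$$\E(\fl_i)=(\pi G)_i,\qquad \E(\fl_1\fl_2)=(\pi G)_1 G_{12}+(\pi G)_2 G_{21}.$$

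Substituting the explicit form of $G$ and simplifying, using $\pi_1+\pi_2+\pi_3=1$, leads after straightforward algebra to
$$\Cov(\fl_1,\fl_2)=\frac{N}{\Delta^2},\qquad N:=q_1 q_2(1-\pi_3^2)+\pi_3\bigl(\pi_1 q_1\eta_2+\pi_2\eta_1 q_2\bigr)-\pi_1\pi_2\eta_1\eta_2.$$
On the other hand, the right hand side of the lemma can be put in the form
$$\tilde R:=\frac{q_1}{q_1+\eta_1}(1-\pi_1)-\pi_2+q_2=q_2+\frac{q_1\pi_3-\eta_1\pi_2}{q_1+\eta_1},$$
so that $(q_1+\eta_1)\tilde R=q_2(q_1+\eta_1)+q_1\pi_3-\eta_1\pi_2$. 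Since $\Delta^{-2}\le \delta^{-4}$, it suffices to prove the inequality $|N|\le C(\delta)\,\Delta^2\,\tilde R$ in the regime where this estimate is nontrivial.

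The final step is a term-by-term comparison. I would group the three contributions of $N$ against the three summands of $\tilde R$: the piece $q_1 q_2(1-\pi_3^2)$ is bounded by $C(\delta)\Delta^2 q_2$ using $q_1(1-\pi_3^2)\le 1$ and $\Delta^2\ge\delta^4$; the $\pi_3$-piece is bounded by $C(\delta)\Delta^2\cdot q_1\pi_3/(q_1+\eta_1)$; and the remaining term $-\pi_1\pi_2\eta_1\eta_2$ is matched against the $-\eta_1\pi_2/(q_1+\eta_1)$ contribution in $\tilde R$ using $\Delta\ge\eta_2(q_1+\eta_1)$ together with $\pi_1,\eta_2\le 1$. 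The main obstacle is this last sign-sensitive bookkeeping, because the terms of $N$ have mixed signs and one must verify that the assumption $\tilde R\ge 0$ (under which the estimate is nontrivial) forces enough cancellation so that the constant $C(\delta)$ depends only on $\delta$. The hypotheses $q_1,\eta_2>\delta$ enter twice: through $\Delta\ge\delta^2$, which tames denominators, and through $(q_1+\eta_1)^{-1}\le\delta^{-1}$, which controls the prefactor in $\tilde R$.
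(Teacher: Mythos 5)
Your Green's-function route is genuinely different from the paper's proof (which introduces $\bar\fl_1$, the time spent at $1$ before the first exit, and uses uniform integrability to get $\Cov(\fl_1,\fl_2)=\pi_1 \E_1 (\bar\fl_1)\E_2(\fl_2)\left[\frac{q_1}{q_1+\eta_1}(1-\pi_1)-\pi_2\right]+O(q_2)$), and the computational part of your proposal is correct: I checked $\Delta$, $G$, the moment formulas $\E(\fl_i)=(\pi G)_i$ and $\E(\fl_1\fl_2)=(\pi G)_1G_{12}+(\pi G)_2G_{21}$, the resulting expression $\Cov(\fl_1,\fl_2)=N/\Delta^2$ with your $N$, and the rewriting $\tilde R=q_2+\frac{q_1\pi_3-\eta_1\pi_2}{q_1+\eta_1}$.

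The gap is the final step. The inequality $|N|\le C(\delta)\Delta^2\tilde R$ you reduce to is false, even on the set where $\tilde R\ge 0$, so no term-by-term matching can establish it. Concretely, take $q_1=\eta_1=\eta_2=\tfrac12$, $p_1=0$, $q_2=\tfrac1{10}$, $p_2=\tfrac25$, $(\pi_1,\pi_2,\pi_3)=(\tfrac35,\tfrac3{10},\tfrac1{10})$ (so \eqref{Ell} holds with $\delta=\tfrac25$): then $\tilde R=\tfrac12\cdot\tfrac25-\tfrac3{10}+\tfrac1{10}=0$, while $N=\tfrac{21}{1000}$, i.e.\ $\Cov(\fl_1,\fl_2)=\tfrac{42}{605}>0$. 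The obstruction is exactly the sign issue you flag: the term $-\pi_1\pi_2\eta_1\eta_2$ of $N$ cannot be ``matched against'' the negative contribution $-\eta_1\pi_2/(q_1+\eta_1)$ of $\tilde R$, since that contribution lowers the right-hand side rather than absorbing anything on the left. What your formula does yield, via the one-line identity
\begin{equation*}
N=\pi_1\eta_2\,(q_1\pi_3-\eta_1\pi_2)+q_2\left[q_1(1-\pi_3^2)+\pi_3\pi_2\eta_1\right],
\qquad q_1\pi_3-\eta_1\pi_2=(q_1+\eta_1)\left[\tfrac{q_1}{q_1+\eta_1}(1-\pi_1)-\pi_2\right],
\end{equation*}
together with $\Delta^2\ge q_1^2\eta_2^2>\delta^4$, is $\left|\Cov(\fl_1,\fl_2)\right|\le C(\delta)\left(\left|\frac{q_1}{q_1+\eta_1}(1-\pi_1)-\pi_2\right|+q_2\right)$, i.e.\ the stated bound with the first group inside an absolute value. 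That is precisely what the paper's own proof produces and what is actually used in \S\ref{SSCovOcc}, where both the bracket and $q_2$ are $O(n_1^{-\eps_2})+O((n_2-n_1)^{-\eps_2})$; so the missing absolute value is a slip in the statement of Lemma \ref{LmLTMC}, not something recoverable by assuming $\tilde R\ge 0$ as your plan requires. If you replace the term-by-term comparison by the identity above, your argument closes and becomes a clean, fully explicit alternative to the paper's proof.
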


In the special case where $\eta_1=0$, $\pi_1=1,$
Lemma \ref{LmLTMC} follows from
\cite[Lemma 3.9(a)]{DG12}.
In this case the statement simplifies significantly since the first term in the RHS vanishes.

The proof of Lemma \ref{LmLTMC} will be given
in \S \ref{SS3State}.

We apply Lemma \ref{LmLTMC} to the states $(n_1, n_2, \infty)$
with $n_1 < n_2$.
This means that we define a 3-state Markov chain as a function of the
random walk $( S_k )$, such that the chain starts in the state 1, 
if the random walk visits $n_1$ for the first time before it visits $n_2$; or in the state
2, if the random walk visits $n_2$ for the first time before it visits $n_1$; or in 
in the state 3 if the random walk never visits $n_1$ or $n_2$. After
that, the chain transitions to state 1, 2 or 3, respectively, if the
next return of the random walk to the set $\{ n_1, n_2 \}$ occurs at $n_1$,
$n_2$, or never does. Clearly 3 is an absorbing state for this chain. So
$$\pi_1=\Prob(n_1\text{ is visited before } n_2), \quad
\pi_2=\Prob(n_2\text{ is visited before } n_1), $$
$$\pi_3=\Prob(n_1\text{ and } n_2\text{ are not visited}).$$
Let $V_n$ be the event that $n$ is visited by our random walk. 
Note that Lemma \ref{LmLDMin} implies $q_2 =O\left((n_2-n_1)^{-\eps_3}\right).$ Hence
the probability that both $n_1$ and $n_2$ are visited with $n_2$ being 
the first
is also $O\left((n_2-n_1)^{-\eps_3}\right).$ 
Therefore
$$
\pi_2 = \Prob (V_{n_2}) - \Prob(n_1 \text{ visited first, then } n_2 \text{ is visited})
\asymp \Prob (V_{n_2})-
\Prob (V_{n_1} \cap V_{n_2}),
$$
where $\asymp$ means the difference between the LHS and the RHS is 
$$ O\left(n_1^{-\eps_2}\right)+O\left((n_2-n_1)^{-\eps_2}\right)\quad\text{where}\quad
\eps_2=\min(\eps_1, \eps_3). $$
Likewise,
$$ \pi_1\asymp \fq, \quad \frac{q_1}{q_1+\eta_1}=\Prob_{n_1}(V_{n_2})\asymp \fq, 
$$
where 
$\fq = 1- \fp$ (see \eqref{InEqRen}). Combining the last two displays, we obtain
$$
\quad \pi_2\asymp \Prob(V_{n_2})-\Prob(V_{n_1} \cap V_{n_2})\asymp \fq-\fq^2, 
$$
These estimates, combined with
Lemma \ref{LmLTMC} imply Lemma \ref{LmLTCov}.

\subsection{Analysis of three state chains.}
\label{SS3State}
\begin{proof}[Proof of Lemma \ref{LmLTMC}]
  Under the assumptions of the lemma,
  $\fl_1, \fl_2$ and $\fl_1\fl_2$ are uniformly integrable
(the uniformity is over all chains satisfying \eqref{Ell}). Let $\bar\fl_1$
be the time spent at 1 before the first visit to another state.
Then,
by the uniform integrability,
$$ \EXP(\fl_1-\bar\fl_1)=O(q_2), \quad \EXP((\fl_1-\bar\fl_1)\fl_2)=O(q_2), \quad
\EXP_2(\fl_1)=O(q_2), \quad \EXP_3(\fl_j)=0 .$$
Hence
$$ \EXP(\fl_1 \fl_2)=\pi_1 \EXP_1 (\bar\fl_1) \frac{q_1}{q_1+\eta_1} \EXP_2(\fl_2)+O(q_2); $$
$$ \EXP(\fl_1)=\pi_1 \EXP_1 (\bar\fl_1)+O(q_2) \quad\text{and}\quad
\EXP(\fl_2)=(\pi_1\frac{q_1}{q_1+\eta_1}+\pi_2) \EXP_2(\fl_2)+O(q_2).
$$
Therefore
\begin{align*} \Cov(\fl_1, \fl_2)&=&\pi_1 \EXP_1 (\bar\fl_1) \frac{q_1}{q_1+\eta_1} \EXP_2(\fl_2)
-\pi_1 \EXP_1 (\bar\fl_1) (\pi_1\frac{q_1}{q_1+\eta_1}+\pi_2) \EXP_2(\fl_2)+O(q_2) \\
&=&\pi_1 \EXP_1 (\bar\fl_1) \EXP_2(\fl_2) \left[\frac{q_1}{q_1+\eta_1} (1-\pi_1)-\pi_2\right]
+O(q_2) \hskip3cm
\end{align*}
as claimed.
\end{proof}

\section{Counterexamples to the strong law.}
\label{sec:oceans}
Here we prove Theorem \ref{ThOcean}. 

Consider first the case $d=1$. 
Assume that we are given a strictly increasing sequence $d_k \in  
\naturals$, $d_1 = 1$, $d_k \to \infty$ sufficiently quickly. 
We will impose finitely
many lower bounds on the growth of $d_k$ and so one can take the biggest lower 
bound.
Now, for $l = d_k, d_k+1,..., d_{k+1} -1$, define
$a_l= k$.
Notice that there is a 
bijective correspondence between strictly increasing sequences $(d_n)$ and 
diverging sequences $(a_n)$ such that $a_n-a_{n-1} \in \{0,1\}$.

Next,  define
$b_1 \gg 1$, $b_{n+1} = b_n + \lfloor b_n / a_n \rfloor$. By induction, we see that 
\begin{equation}
\label{eq:b_nbounds}
a_n\leq n < b_n < b_{n+1} \quad \text{and, for large }n,\quad b_n< 2^{n}.
\end{equation}
Consider the function $F$ defined by $F(0) = 0$,
\begin{equation}
\label{defcounterex}
F(x) = \begin{cases}
1 & \text{ if } b_{2k} \leq x < b_{2k+1} \text{ for some } k\\
0 & \text{ if } b_{2k +1} \leq x < b_{2k+2} \text{ for some } k\\
\end{cases}
\end{equation}
for $x>0$ and $F(x) = F(-x)$ for $x <0$. We start by
verifying that $F \in \bG_0$.

\begin{lemma}
If 
\begin{equation}
\label{eq:dk}
\lim_{k \to \infty} \sqrt{d_k}/d_{k-1} = \infty,
\end{equation}
then $F \in \bG_0$ with $\bar F = 1/2$.
\end{lemma}

Note that \eqref{eq:dk} is satisfied if e.g. $d_1=1$ and 
$d_k=2^{2^{2^k}}$ for $k \geq 2$.

\begin{proof}
Recall that 
$b_n < 2^n$, for large $n$ (say for $n\geq n_0$).
Next, we establish the a priori lower bound
\begin{equation}
\label{eq:bnlowerbd}
2^{c \sqrt n}\leq b_n 
\end{equation}
for some $c$. To prove \eqref{eq:bnlowerbd}, first observe that, 
by \eqref{eq:dk}, there is some $C$
so that for all $n \geq n_0$,
$a_n < C \sqrt n$.
Thus
$$
b_{n+1} \geq b_n + \lfloor b_n / C \sqrt n \rfloor \geq b_n(1 + 1/(C\sqrt n)) - 1 
\geq b_n(1 + 1/(2C \sqrt n))
$$
where the last inequality follows from $n < b_n$. Thus 
$\DS b_n \geq b_{n_0} 
\prod_{k={n_0}}^{n-1} (1 + 1/(2C \sqrt k))$. Hence
$$b_n \geq \exp\left[ \ln (b_{{n_0}}) + \sum_{k=1}^n \ln(1 + 1/(2C \sqrt k)) 
\right]
\geq 2^{c \sqrt n}$$
for $n \geq n_0$. 
Clearly, we can assume that \eqref{eq:bnlowerbd} holds for all $n < n_0$
by further decreasing
$c$ if necessary.

We will prove that $\DS \lim_{m\to \infty} \cS(m)/m = 0$, where 
$\cS(m) = \sum_{x=1}^{m}(F(x) - 1/2)$. This will imply the lemma. Now given 
$m$, let $n'$ be so that $b_{n'-1} \leq m < b_{n'}$. 
Let us denote 
\begin{align*}
n'' = \max \{ n \leq n': a_n = a_{n-1} + 1\}\\
n''' =\max \{ n < n'': a_n = a_{n-1} + 1\} 
\end{align*}
which are well defined for $m$ large enough.
Assume without loss of generality that $n'$
is even (the case of odd $n'$ is similar). Then
$$
\cS(m) \leq \frac12 b_{n'''} + 
\sum_{x = b_{n'''}+1}^{b_{n''}} \left( F(x) - \frac12 \right) 
+ \sum_{x = b_{n''}+1}^{b_{n'}} \left( F(x) - \frac12 \right) 
=:  \frac12 b_{n'''} +  \cS_1 + \cS_2
$$
Note that we have
$$
\cS_1 = \frac12 \sum_{n=n'''}^{n''-1} (-1)^n (b_{n+1} - b_n).
$$
We claim that
\begin{equation}
\label{eq:csbound}
|\cS_1| \leq \frac12 (b_{n''} - b_{n'' -1}).
\end{equation}
To prove \eqref{eq:csbound}, observe that 
for $n=n''', n'''+1,...,n''-1$, the sequence $a_n$ is constant and hence
the sequence 
$n \mapsto (b_{n+1} - b_n)$ is monotone increasing for this range of $n$'s. 
So in case $n''-1$ is even, we have
$$
\frac12 \sum_{n=n'''}^{n''-2} (-1)^n (b_{n+1} - b_n) \leq 0 \leq \cS_1.
$$
Hence
$$0 \leq \cS_1 \leq \cS_1 - \frac12 \sum_{n=n'''}^{n''-2} (-1)^n (b_{n+1} - b_n)
= \frac12( b_{n''} - b_{n'' -1})$$
(the case when $n''-1$ is odd is similar). We have verified \eqref{eq:csbound}.
Likewise, we have
$$
|\cS_2| \leq \frac12 (b_{n'} - b_{n' -1}).
$$
Consequently, 
$$
\frac{\cS(m)}{m} \leq \
\frac{b_{n'''}}{2b_{n'}} + \frac{b_{n''} - b_{n''-1}}{2b_{n'}} 
+ \frac{b_{n'} - b_{n'-1}}{2b_{n'}}.
$$
Since $b_{n'-1} \sim m \sim b_{n'}$, the last two terms on the RHS converge to zero. 
To estimate the first term, we first use \eqref{eq:bnlowerbd} to derive
$$
\frac{b_{n'''}}{b_{n'}} \leq \frac{b_{n'''}}{b_{n''}} 
 <  2^{n''' - c\sqrt{n''}}.
$$
Now by the definition of $n''$ and $n'''$, there is some $k_0$ so that $n'' = d_{k_0}$
and $n''' = d_{k_0 -1}$ and so the RHS of the last displayed inequality
also converges to zero as $m \to \infty$ (and consequently $k_0 \to \infty$)
by \eqref{eq:dk}. The lemma follows.
\end{proof}

Let us denote $c_n = (b_n + b_{n+1})/2$, $t_n = \lfloor c_n^{\alpha} \rfloor$ and
$$I_n = [c_n - \lfloor b_n / 4 a_n \rfloor, c_n + \lfloor b_n / 4 a_n \rfloor].$$
We will show that almost surely, infinitely many of the events
$$
A_{2n} = \{ \forall k \in[ t_{2n}, 3 t_{2n}]: S_k \in I_{2n}\}
$$
occur and, likewise, infinitely many of the events 
$$
A_{2n+1} = \{ \forall k \in [t_{2n+1}, 3 t_{2n+1}]: S_k \in I_{2n+1}\}
$$
occur. This 
proves the theorem as $A_{2n}$ implies $T_{3t_{2n}} \geq 2 t_{2n}$ and 
$A_{2n+1}$ implies $T_{3t_{2n+1}} \leq t_{2n+1}$.

To complete the proof, let us fix a sequence $D_n \nearrow \infty$ such that
$$ \sum_n \Prob(|S_n|\geq D_n)<\infty. $$
Then by the Borel-Cantelli Lemma,
$$
\mathbb P (\exists N: \forall n > N: |S_n| < D_n) =1.
$$
Now we choose a subsequence $n_k \in \integers$ inductively so that 
\begin{equation}
\label{n_kparity}
n_{k+1} \equiv n_k  \pmod{2}
\end{equation}
and
\begin{equation}
\label{eq:nkbound}
n_{k+1} > \max \left\{\exp \left(D_{  \lceil 2^{\alpha (n_k +1)} \rceil }\right), 
\exp \left( \frac{1}{\alpha} 2^{(n_k +1)\alpha}\right) \right\}. 
\end{equation}
These bounds, combined with \eqref{eq:b_nbounds}, give
\begin{equation}
\label{eq:b_nbd2}
b_{n_{k+1}} > \exp (D_{t_{n_k}}) \text{ and } t_{n_{k+1}} > \exp (t_{n_k}).
\end{equation}

We want
to show that for every $\eps >0$ and every $K$,
\begin{equation}
\label{eq:BC}
\mathbb \Prob \left(\bigcap_{k=K}^{\infty} A_{n_k}^c\right) < \eps.
\end{equation}

Since $\eps >0$ is arbitrary, it follows that infinitely many of the events $A_{n_k}$ happen.

Choosing $n_k$ to be even for all $k$ we see
that almost surely infinitely many of the events $A_{2n}$
and happen. Likewise, choosing $n_k$ to be odd for all $k$ we see that
infinitely many of the events $A_{2n+1}$ happen.
Thus it remains to verify \eqref{eq:BC}.

Given $\eps$ and $K$, choose $K'>K$ so that 
$\mathbb P (\mathcal B) < \eps $, where 
$$\mathcal B = \{ \exists n \ge n^{\alpha}_{K'}: |S_n| \ge D_n\}.$$
Then we write
\begin{equation}
\label{eq:BCbound}
\P \left(\bigcap_{k=K}^{\infty} A_{n_k}^c\right) \leq 
\P \left(\bigcap_{k=K'}^{\infty} A_{n_k}^c\right)
\leq \eps + \P \left(\bigcap_{k=K'}^{\infty} A_{n_k}^c \cap \mathcal B^c\right).
\end{equation}
By construction, we have
\begin{gather}
\P \left( A_{n_{k+1}}^c \cap \mathcal B^c \Big| \bigcap_{j=K'}^{k} A_{n_j}^c \cap \mathcal B^c\right)
\leq 1 - \P \left( A_{n_{k+1}} \Big| \bigcap_{j=K'}^{k} A_{n_j}^c \cap \mathcal B^c\right) \nonumber\\
\leq 1- \min_{x: |x| < D_{ 3t_{n_k}}} \P ( A_{n_{k+1}} | S_{ 3t_{n_k}} = x) \label{Anest}
\end{gather}
for $k > K'$.
\begin{lemma}
\label{lem:stableprocess}
There is a constant $K_0$ and a sequence $a_n \nearrow \infty$ with $a_n 
- a_{n-1} \in \{ 0,1\}$
such that for any $k \geq K_0$,
$$
\min_{x: |x| < D_{ 3t_{n_k}}} \P ( A_{n_{k+1}} | S_{3t_{n_k}} = x) 
\geq \frac{1}{k} .
$$
Also, $(a_n)$ is such that the corresponding $(d_n)$ satisfies \eqref{eq:dk}.
\end{lemma}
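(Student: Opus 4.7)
The plan is to reduce the inequality to a positive tube-probability estimate for the limiting $\alpha$-stable L\'evy process via the functional invariance principle, and then choose $a_n$ growing slowly enough to realize this lower bound.

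First I would condition and rescale. By the Markov property, for $|x| < D_{3t_{n_k}}$,
$$\Prob(A_{n_{k+1}} \mid S_{3t_{n_k}}=x) = \Prob\!\left(x+\tilde S_m \in I_{n_{k+1}}\ \forall m \in [t_{n_{k+1}}-3t_{n_k},\,3t_{n_{k+1}}-3t_{n_k}]\right),$$
where $\tilde S$ is an independent copy of the walk. Setting $c := c_{n_{k+1}}$, rescale space by $c$ and time by $c^\alpha$. The bounds \eqref{eq:b_nbd2} give $|x|/c \to 0$ uniformly for $|x|<D_{3t_{n_k}}$, $3t_{n_k}/c^\alpha \to 0$, and $t_{n_{k+1}}/c^\alpha \to 1$; hence the time window rescales to $[1,3]+o(1)$, the starting point to $o(1)$, and $c^{-1}I_{n_{k+1}}$ is a neighbourhood of $1$ of half-width $\asymp 1/a_{n_{k+1}}$. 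By the functional invariance principle for walks in the normal domain of attraction of a nondegenerate $\alpha$-stable law with $\E(X)=0$, $\alpha\in(1,2]$, the rescaled process $s\mapsto c^{-1}\tilde S_{\lfloor sc^\alpha\rfloor}$ converges weakly in Skorokhod's $J_1$-topology to the stable L\'evy process $Y$ with $Y(0)=0$.

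Next I would verify positivity of the limiting tube probability: for every $\epsilon > 0$,
$$\phi(\epsilon) := \Prob\!\left(Y(s) \in (1-\epsilon,1+\epsilon)\ \forall\, s \in [1,3]\right) > 0.$$
This follows by conditioning on $Y(1)$: its density is positive at $1$ (nondegenerate stable law), so $Y(1)\in(1-\epsilon/2,1+\epsilon/2)$ with positive probability, and then by the standard small-ball positivity for L\'evy processes (the support of the law on $C([0,2],\R)$ in Skorokhod sense contains constant paths), conditional on $Y(1)=y$ with $|y-1|<\epsilon/2$ the process $Y$ stays within $\epsilon/2$ of $y$ throughout $[1,3]$ with positive probability. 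Applying the Portmanteau theorem to the open tube event then yields, for any fixed $\epsilon$ and uniformly in $x$ with $|x|/c \to 0$,
$$\liminf_{k\to\infty} \Prob_x(A_{n_{k+1}}) \geq \phi(\epsilon),$$
whenever the half-width $1/a_{n_{k+1}}$ eventually exceeds $\epsilon$.

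Finally I would construct $(a_n)$. Since $(n_k)$ is predetermined (depending only on $(D_n)$) and grows at least doubly exponentially, while $\phi$ is nondecreasing and positive on $(0,\infty)$, define
$$j(k) := \max\{\, j \in \N : \phi(1/(5j)) \geq 2/k \,\};$$
then $j(k)$ is nondecreasing and $j(k) \to \infty$. Let $(a_n)$ be any nondecreasing sequence with increments in $\{0,1\}$ and $a_{n_{k+1}} = j(k)$ for all large $k$; this is feasible because $j(k+1) \leq j(k+1) \ll n_{k+2} - n_{k+1}$. Combining the previous steps (after absorbing the geometric constants of $c^{-1} I_{n_{k+1}}$ into the factor $5$),
$$\liminf_{k\to\infty}\ k\cdot \min_{|x|<D_{3t_{n_k}}} \Prob_x(A_{n_{k+1}}) \geq \liminf_{k\to\infty} k\cdot\phi(1/(5j(k))) \geq 2,$$
which gives the lemma for $k \geq K_0$ large enough. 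The main technical obstacle is the Portmanteau step when $\alpha<2$: the tube event is not $J_1$-continuous because $Y$ has jumps. This is handled by using the \emph{open} tube $(1-\epsilon,1+\epsilon)$ and the fact that $Y$ a.s.\ places no jump at the deterministic times $s=1,3$ and that $Y(1),Y(3)$ have densities, so the topological boundary of the event has $Y$-measure zero.
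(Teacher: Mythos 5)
Your overall strategy --- invariance principle plus positivity of a tube probability for the limiting stable process, then choosing $a_n$ to grow slowly --- is the same idea the paper uses, and your observation that $\phi(\eps)>0$ even when the limit is a subordinator would, if completed, let you treat the paper's Case 1 and Case 2 in a unified way. But there is a genuine gap at the decisive quantitative step. What your final display needs is an inequality of the form $\min_x \Prob_x(A_{n_{k+1}}) \ge c\,\phi\bigl(1/(5 a_{n_{k+1}})\bigr)$, i.e.\ a bound for tubes whose rescaled half-width $\asymp 1/a_{n_{k+1}}$ shrinks to $0$ as $k\to\infty$ (note that for any fixed $\eps$ the half-width is eventually \emph{smaller} than $\eps$, not larger, so the fixed-$\eps$ tube event does not contain $A_{n_{k+1}}$ for large $k$). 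The Portmanteau theorem applied to a fixed open tube gives $\liminf_k \Prob_x(\hbox{tube of half-width }\eps)\ge\phi(\eps)$ with no rate, hence no control when the set varies with $k$. Your definition $j(k)=\max\{j:\phi(1/(5j))\ge 2/k\}$ refers only to $\phi$ and $k$ and never ties the growth of $a$ to how large $t_{n_{k+1}}$ must be before the finite-$n$ probability at width $1/(5j(k))$ is comparable to its limiting value; without such a compatibility condition the first inequality of your last display is unjustified (the feasibility remark ``$j(k+1)\le j(k+1)\ll n_{k+2}-n_{k+1}$'' concerns only realizability of the increments, not this convergence-rate issue).

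This is exactly what the paper's proof is organized around. In Case 1 it avoids shrinking sets altogether: it rescales by the tube width $b_{n_{k+1}}/(4a_{n_{k+1}})$ rather than by $c_{n_{k+1}}$, so the confinement event becomes a \emph{fixed} event (stay in $[-1/4,1/4]$ for one unit of time and end in $[-1/16,1/16]$), iterated $\approx 2\cdot 4^\alpha a^\alpha$ times via the Markov property; the weak-convergence error then concerns a fixed set and is absorbed by taking $\tilde p<\brp$, giving \eqref{eq:case1exp}. In Case 2, where this iteration fails for the subordinator, the paper introduces the threshold $N_0(\eps)$ of \eqref{case2pos} and defines $a$ \emph{inductively} through conditions (A) and (B), incrementing $a$ only after $n_{k+1}^\alpha$ has passed the threshold associated with the new width --- precisely the diagonalization your argument is missing. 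Your proof can be repaired in the same spirit: for each $j$ let $M(j)$ be such that for all walks of length at least $M(j)$ the open-tube probability at half-width $1/(5j)$ is at least $\phi(1/(5j))/2$, uniformly over the admissible rescaled starting points (such $M(j)$ exists for each fixed $j$ by the invariance principle), and allow $a_{n_{k+1}}=a_{n_k}+1$ only if $n_{k+1}^\alpha \ge M(a_{n_k}+1)$ and $\phi\bigl(1/(5(a_{n_k}+1))\bigr)\ge 4/(k+1)$; since $t_{n_{k+1}}\gtrsim n_{k+1}^\alpha$ regardless of the not-yet-chosen values of $a$, this breaks the apparent circularity between $a$, $b_n$, $t_n$ and the events $A_n$, and the lemma follows. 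As written, however, the proof is incomplete at this step.
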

Clearly, Lemma \ref{lem:stableprocess} 
combined with \eqref{Anest} and \eqref{eq:BCbound} implies \eqref{eq:BC}.
Thus the proof of Theorem \ref{ThOcean}
for the case $d=1$ will be completed once we prove Lemma \ref{lem:stableprocess}.

\begin{proof}[Proof of Lemma \ref{lem:stableprocess}]
Recall that the invariance principle gives
\begin{equation}
\label{invprinciple}
\frac{S_{\lfloor Nt \rfloor }}{N^{1/\alpha}} \Rightarrow Y_t,
\end{equation}
where $Y_t$ is a stable L\'evy process. In particular, 
$Y_1$ is a stable random variable with parameter $\alpha$ and "skewness" $\beta \in [-1,1]$
(see e.g. \cite{B96}, Chapter VIII).
Now we distinguish two cases.

{\bf Case 1} $\alpha >1$ or $|\beta|\neq 1.$
The proof in case 1 consists of 3 steps.

{\it Step 1}: We prove that $q >0$ where
\begin{equation}
\label{DefQ}
q = \inf_{y \in [-1/16, 1/16]} 
\mathbb P \left(\sup_{t \leq 1} |Y_t| < \frac14, |Y_1| < \frac{1}{16} \Big| Y_0 = y \right).
\end{equation}
Indeed,
as $\alpha >1$ or $|\beta|\neq 1$, the stable process $Y_t$ cannot be a subordinator. In particular, the 
density of $Y_t$ is positive everywhere for every $t>0$ and $Y_t$ has the scaling property (see page 216
in \cite{B96}). Thus we have
$$
\liminf_{\eps \searrow 0} \inf_{y \in [-1/16, 1/16]} 
\mathbb P \left(|Y_\eps| < \frac{1}{16} \Big| Y_0 = y \right) = p >0.
$$
By Exercise 2 of Chapter VIII in \cite{B96}, there is some $\eps >0$ such that 
$$
\mathbb P \left(\sup_{t \leq \eps} |Y_t| < \frac{3}{16} \Big| Y_0 = 0 \right) >1-p/2.
$$
Combining the last two displayed equations, we derive
$$
\inf_{y \in [-1/16, 1/16]} 
\mathbb P \left(\sup_{t \leq \eps} |Y_t| < \frac14, |Y_\eps| < \frac{1}{16} \Big| Y_0 = y \right) \geq p/2.
$$
Applying this inequality inductively, we obtain that
$q \geq (p/2)^{\lceil 1/\eps \rceil}$, which completes Step 1.

{\it Step 2}: We prove that there is some $\brc$ so that for every $k$
\begin{equation}
\label{lltarrival}
\min_{x: |x| < D_{ 3t_{n_k}}} \Prob \left( \left| S_{t_{n_{k+1}}} - c_{n_{k+1}} \right| < \frac{b_{n_{k+1}}}{16 a_{n_{k+1}}}
\Big| S_{ 3t_{n_k}} = x \right) \geq \frac{\brc}{a_{n_{k+1}}}.
\end{equation}
To simplify the formulas, let us write
$$
\ta = a_{n_{k+1}},\; \; \tb = b_{n_{k+1}}, \;\; \tc = c_{n_{k+1}},\; \;  \tilde t = t_{n_{k+1}}.
$$
Recall that the density of $Y_1$ is strictly positive. The LLT now implies that for every $K >0$ there is some $c$ so that for all $z$ with $|z| \leq K$,
$$
\Prob(S_N = \lfloor z N^{1/\alpha} \rfloor) \geq \frac{c}{N^{1/\alpha}}.
$$
Next observe that by the definition of $\ta, \tb, \tc$, $\tilde t$ and by \eqref{eq:b_nbd2}
$$
t_{n_{k+1}} - 3 t_{n_{k}} \sim \tilde t \sim \tb^{\alpha}, 
\quad \tilde c \pm D_{3 \tilde t} \sim \tilde t^{1/\alpha}
\text{ as } k \to \infty.
$$
Thus for every $x \in \integers$ with $|x| <D_{3 \tilde t}$,
$$
 \Prob \left( \left| S_{\tilde t} - \tc \right| < \frac{\tb}{4 \ta}
\Big| S_{ 3t_{n_k}} = x \right) 
= \Prob \left( \left| S_{\tilde t - 3t_{n_k}} - (\tc - x) \right| < \frac{\tb}{4 \ta} \right) 
\geq \frac{c}{\tb} \frac{\tb}{4 \ta} = \frac{\brc}{\ta}.
$$

{\it Step 3}: We prove that there is some 
${ \brp}\in (0,1)$ such that
\begin{equation}
\label{eq:case1exp}
\min_{x: |x| < D_{3t_{n_k}}} \Prob ( A_{n_{k+1}} | S_{3t_{n_k}} = x) 
\geq  \frac{\brc}{a_{n_{k+1}}} \;{ \brp}^{a_{n_{k+1}}^\alpha}.
\end{equation}
The lemma will follow from \eqref{eq:case1exp} as we can assume that 
our sequence $a_n \nearrow \infty$ with $a_n - a_{n-1} \in \{ 0,1\}$
satisfies
$a_{n_k} \leq (- \log k / \log \tilde p )^{1/\alpha}$
for a fixed $\tilde p \in (0,{ \brp})$.

To prove \eqref{eq:case1exp}, we will combine \eqref{DefQ} with \eqref{lltarrival}.
Namely, define
$$
\tilde A =A_{n_{k+1}}, \quad \tilde N = \left\lceil \left( \frac{\tb}{\ta} \right)^{\alpha}
\right\rceil, \quad \tilde l = \lceil 2 \ta^{\alpha} \rceil + 1. 
$$
Let us define the event $\tilde B_0 = \{  |S_{\tilde t } - \tc |< \frac{\tilde N^{1/\alpha}}{16}\}$ and for $\brl = 1,...,\tl $:
\begin{align*}
\tilde B_{\brl} 
= 
\tilde B_0 \cap
 \Bigg\{ & \forall  l = 0,...,\brl -1, \forall m = 1,..., \tilde N -1: \\
&
|S_{\tilde t + l \tilde N + m}  - \tc |< \frac{\tilde N^{1/\alpha}}{4},\;
|S_{\tilde t + (l+1) \tilde N} - \tc |< \frac{\tilde N^{1/\alpha}}{16} \Bigg\}.
\end{align*}
Note that $\tilde B_{\tl}$ implies $\tilde A$.
Thus it is enough to prove \eqref{eq:case1exp}
with $\tilde A$ replaced by $\tilde B_{\tl}$. 
 First, \eqref{lltarrival} implies
$$
\min_{x: |x| < D_{3t_{n_k}}} \Prob ( \tilde B_{0} | S_{3t_{n_k}} = x) \geq
\frac{\brc}{\ta}.
$$
Next, we derive
$$
\min_{x: |x| < D_{3t_{n_k}}} \Prob ( \tilde B_{1} | S_{3t_{n_k}} = x) \geq $$
$$
\min_{x: |x| < D_{3t_{n_k}}} 
\Prob ( \tilde B_{0} | S_{3t_{n_k}} = x)
\min_{y: |y | < \frac{\tilde N^{1/\alpha}}{16}}
\Prob \left( \max_{m=1,..., \tilde N -1} |S_{m}|< \frac{\tilde N^{1/\alpha}}{4}, 
|S_{\tilde N}| <  \frac{\tilde N^{1/\alpha}}{16} \big| S_0 = y\right) 
$$
$$ \geq \frac{\brc}{\ta} \frac{q}{2},$$
where the first inequality follows from the Markov property and the second one 
follows from \eqref{invprinciple} and \eqref{DefQ}.
Now we can apply an induction on $\brl$, to obtain
$$
\min_{x: |x| < D_{3t_{n_k}}} \Prob ( \tilde B_{\brl} | S_{3t_{n_k}} = x) \geq
\frac{\brc}{\ta} \left( \frac{q}{2} \right)^{\brl}.
$$
In particular,
$$
\min_{x: |x| < D_{3t_{n_k}}} \Prob ( \tilde B_{\tl} | S_{3t_{n_k}} = x) \geq
\frac{\brc}{\ta} \left( \frac{q}{2} \right)^{\tilde l}
$$
which completes the proof of Step 3.

{\bf Case 2} $\alpha <1$ and $|\beta|=1$.
Let us assume  $\beta = 1$ (otherwise apply the forthcoming argument to $-X_i$). 

\eqref{lltarrival} still holds in case 2, however a new approach is required to estimate
$\Prob ( A_{n_{k+1}} | S_{t_{n_k}} = x) $
since now the process $Y_t$ 
(a.k.a. stable subordinator)
is non-decreasing and thus $q=0$.
Note however that in case 2, $\sup_{t \leq 1} |Y_t| = Y_1$ and thus it suffices to estimate one 
random variable instead of a stochastic process.
Recall that the density of $Y_1$ is strictly positive on $\mathbb{R}^+$. 
Thus applying \eqref{invprinciple}
to $|X_i|$ (which is also in the standard domain of attraction of the totally 
skewed $\alpha$-stable distribution)
we obtain the following: for any $\eps >0$ there exist $N_0(\eps)$ and $\delta(\eps)>0$ 
such that for any $N \geq N_0(\eps)$,
\begin{equation}
\label{case2pos}
\P\left( \sum_{n = 1}^{3 N} |X_n| \leq \frac{\eps}{8} N^{1/\alpha} \right) > \delta(\eps).
\end{equation}
Without loss of generality, we assume that $N_0$ and $\delta$ are, respectively,
non-decreasing and non-increasing functions of $\eps$.
Now we define the sequence $a_n$ inductively. First, let $a_1 = 1$. Now assume that $a_{n_{k}}$ is defined.
Let $a_m = a_{n_{k}}$ for $m = n_{k} + 1,..., n_{k+1} -1$. Next, we define $a_{n_{k+1}} = a_{n_{k}}+1$
if both of the following conditions are satisfied:
\begin{enumerate}
\item[(A)] $ N_0\left(\frac{1}{a_{n_{k}} +1}\right)<n_{k+1}^{\alpha} $ and
\item[(B)] $\frac{\brc}{2a_{n_{k}} +1} \;\delta \!\left( \frac{1}{a_{n_{k}} +1}\right) > \frac{1}{k+1}$.
\end{enumerate}
Here, $\brc$ is the constant from \eqref{lltarrival}.
If either (A) or (B) fails, we put $a_{n_{k+1}} = a_{n_{k}}$. 
Note that by \eqref{eq:nkbound}, the resulting sequence $d_k$ satisfies
\eqref{eq:dk}.

Observe that by our construction, for all $k$, we have 
\begin{equation}\label{NZero}
 N_0\left(\frac{1}{a_{n_{k}}}\right)<n_{k}^{\alpha} ;
 \end{equation}
\begin{equation} \label{Delta}\frac{\brc}{2 a_{n_{k}}} \;\delta \!\left( \frac{1}{a_{n_{k}}} \right) > \frac{1}{k}.
\end{equation}
Indeed,
if $a_{n_{k+1}}=a_{n_k}+1$ then \eqref{NZero} and \eqref{Delta} follow from conditions (A) and (B) above.
If $a_{n_{k+1}}=a_{n_k}$ then \eqref{NZero} and \eqref{Delta} follow by induction since the LHSs
of both \eqref{NZero} and \eqref{Delta} do not change when we replace $k$ by $k+1,$ while
the RHS of \eqref{NZero} increases and the RHS of \eqref{Delta} decreases.

By construction, $a_n \nearrow \infty$. Let $K_0$ be the smallest integer $k$ so that $a_{n_k} = 2$.
We prove that the lemma holds with this choice of $K_0$ and $a_n$. 
Recall that 
 by \eqref{eq:b_nbounds},
$b_{n_k}>n_k$ and so by \eqref{NZero}, $N:= b_{n_k}^{\alpha} > N_0(\eps)$ with $\eps = 1/a_{n_k}$. Applying
\eqref{case2pos} with this $N$ and $\eps$ and using \eqref{Delta}, we obtain
\begin{equation*}
\P \bigg( \left| S_m - S_{t_{n_{k}}} \right| < 
\frac{b_{n_{k}}}{8 a_{n_{k}}} \;\; \forall m \leq 3 b^{\alpha}_{n_{k}} 
\bigg) 
> \frac{a_{n_k}}{\brc} \frac{2}{k}.
\end{equation*}
and since $t_{n_k} < b_{n_k}^{\alpha}$ and $k-1 > k/2$, we arrive at
\begin{equation}
\label{eq:stayintube}
\P \bigg( \left| S_m - S_{t_{n_{k}}} \right| < 
\frac{b_{n_{k}}}{8 a_{n_{k}}} \;\; \forall m \leq  3t_{n_{k}} 
\bigg) 
> \frac{a_{n_{k-1}}}{\brc} \frac{1}{k-1}.
\end{equation}
As discussed above, \eqref{lltarrival} holds. 
Since $a_{n_{k+1}} - a_{n_k} \in \{0,1\}$ and $a_{n_k} \nearrow \infty$,
we can also assume 
 changing $\brc$ if necessary that \eqref{lltarrival} holds with
$a_{n_{k+1}}$ replaced by $a_{n_k}$ on the RHS.
Now we use the Markov property, 
\eqref{eq:stayintube} with $k$ replaced by $k+1$
and \eqref{lltarrival} (with the updated RHS) to derive
\begin{align*}
\min_{x: |x| < D_{ 3t_{n_k}}} & \P ( A_{n_{k+1}} | S_{3t_{n_k}} = x)  \\
\geq &
\min_{x: |x| < D_{ 3t_{n_k}}} \Prob \left( \left| S_{t_{n_{k+1}}} - c_{n_{k+1}} \right| < \frac{b_{n_{k+1}}}{16 a_{n_{k+1}}}
\Big| S_{ 3t_{n_k}} = x \right) \times\\
&\times \P \bigg( \left| S_m - S_{t_{n_{k+1}}} \right| < 
\frac{b_{n_{k+1}}}{8 a_{n_{k+1}}} \;\; \forall m \leq  3t_{n_{k}} 
\bigg) \\
\geq &\frac{\brc}{a_{n_{k}}} \frac{a_{n_{k}}}{\brc} \frac{1}{k} = \frac{1}{k}.
\end{align*}
The lemma follows.
\end{proof}

The above proof, with a few minor adjustments, applies to arbitrary dimension $d$. 
Specifically, we need to consider the function 
$\mathcal F \in \bG_0$
defined by
$$\mathcal F(x_1,...,x_d) = 
\begin{cases}
F(x_1) & \text{ if } |x_i| \leq |x_1| \text{ for } i=2,...,d\\
\frac12 & \text{ otherwise,}
\end{cases}
$$
where $F$ is given by \eqref{defcounterex} and we need to replace $I_n$ by
$$ [c_n - \lfloor b_n / 4 a_n \rfloor, c_n + \lfloor b_n / 4 a_n \rfloor]
\times \left[- \lfloor b_n / 4 a_n \rfloor, \lfloor b_n / 4 a_n \rfloor\right]^{d-1}.
$$

\begin{remark}
It is easy to adjust the above proof to derive the following stronger version of Theorem \ref{ThOcean}:
There is a function $F \in \bG_0$ so that $F$ only takes values $\{0,1\}$, $\brF = 1/2$ and for almost every $\omega$ and for any
$a \in [0,1]$,
there is a subsequence $n_k = n_k(a, \omega)$ such that 
$T_{n_k} / n_k \to a$.
\end{remark}

\ignore{
\section{Auxiliary statements.}
\label{sec:app}

\subsection{Maximum of random walk.}
\begin{proof}[Proof of Lemma \ref{LmMax}]
In case (i) the statement follows from the Law of Large Numbers, so we only need to consider
{cases (ii) and (iii).
We have for any $\eps >0$ that $|S_N| >N^{1/\alpha + \eps}$ holds only finitely many times almost surely
by \cite{M39} in case (ii) and by \cite{L31} in case (iii). The lemma follows.
}
\end{proof}

{
Pick $1/\alpha<\gamma_2<\gamma_1.$ Observe that by Borel-Cantelli Lemma 
$|X_n|<n^{\gamma_2},$ so it suffices to prove that for each $\gamma_3>\gamma_2$
\begin{equation}
\label{MaxCutOff}
 \sum_{n=1}^N X_n 1_{|X_n|<n^{\gamma_2}}< N^{\gamma_3}. 
\end{equation} 
The claim of the lemma then follows by taking $\gamma_3<\gamma_1.$

To prove \eqref{MaxCutOff} we split
$$ X_n 1_{|X_n|<n^{\gamma_2}}=\cD_n+\cX_n,$$
where $\DS \cD_n=\EXP\left(X_n 1_{|X_n|<n^{\gamma_2}}\right).$

We begin with estimating the drift.
\cite{BGT} shows that for each $\delta>0$
there is a constant $C=C_\delta$ such that
\begin{equation}
\label{RV-EXP}
|\cD_n|\leq C n^{\gamma_2(1-\alpha+\delta)}
\end{equation}
To establish \eqref{RV-EXP} in case $\alpha\leq 1$ one can use that
$$ D_n=\int_{|x|<n^{\gamma_2}} x dP(X>x) $$
while in case $\alpha>1$ we need to use the fact $\EXP(X_n)=0$
and hence
$$ D_n=\int_{|x|>n^{\gamma_2}} x dP(X>x) .$$
\eqref{RV-EXP} gives 
$$ \sum_{n=1}^N \cD_N \leq C N^{\gamma_2(1-\alpha+\delta)+1}. $$
Observe that if $\gamma_2=1/\alpha$and $\delta=0$ then
$$ \gamma_2(1-\alpha+\delta)+1=\frac{1}{\alpha}<\gamma_3. $$
Therefore taking $\delta$ sufficiently small and $\gamma_2$ sufficiently close to
$1/\alpha$ we can get
$$ \gamma_2(1-\alpha+\delta)+1< \gamma_3$$
so $\DS \sum_{n=1}^N \cD_N$ is negligible.

\cite{BGT} also shows that $\EXP(\cX_n^2)\leq C n^{\gamma_2(2-\alpha+\delta)}$ whence
$$ V\left(\sum_{n=1}^N \cX_n\right)\leq C N^{\gamma_2(2-\alpha+\delta)+1}. $$
By the maximal inequality for each $p$
$$ \Prob\left(\sum_{n=1}^N \cX_n\geq N^{\gamma_3}\right)\leq C 
\frac{N^{p(\gamma_2(2-\alpha+\delta)+1)}}{N^{2p\gamma_3}}.
$$
Taking $p$ sufficiently large, $\delta$ sufficiently small, and $\gamma_2$ sufficiently
close to $1/\alpha$ we can make the RHS smaller than
$1/N^2.$ Now \eqref{MaxCutOff} follows from Borel-Cantelli Lemma.}}

\ignore{
\subsection{Large deviations for minimum.}
\label{SSLDMin}

We need the following fact.

\begin{lemma}
\label{LmLDMax}
Let $\cX_n$ be iid random variables
such that $\EXP(\cX_n)=0,$ and for some $C, \beta >0$ and for all $t>1$, 
$\Prob(|\cX|>t)\leq \frac{C}{t^\beta}.$  Then for each $c$ there exists $\brC$
such that the random walk $\DS \cS_N=\sum_{n=1}^N \cX_n$
satisfies
$$ \Prob\left(\max_{1\leq n\leq N} (|\cS_n|)>c N\right)\leq \brC  N^{1-\beta}. $$
\end{lemma}

\begin{proof}
Let $\DS \cX_n^*=\cX_n 1_{|\cX_n|\leq N}$. 
We have
$$ \Prob\left(\max_{1\leq n\leq N} (|\cS_n|)>N\right)\leq \Prob(\exists n\leq N: \cX_n^*\neq \cX_n)+
 \Prob\left(\max_{1\leq n\leq N} (|\cS_n^*|)>cN/2\right)$$
where $\cS_N^*=\sum_{n=1}^N \cX_n^*.$ The first term is $O\left(N^{1-\beta}\right).$
The second term is bounded by
$$
 \Prob\left(\max_{1\leq n\leq N} (\cS_n^*)>cN/2\right) +  \Prob\left(\max_{1\leq n\leq N} ( - \cS_n^*)>cN/2\right)
$$
We estimate the first probability, the second being identical. 
Since $ \DS \lim_{N\to\infty} \EXP\left(\cX_n^*\right)=0$, we can assume that
$ |\EXP\left(\cX_n^*\right)|< c/4$. Now by Doob's inequality
$$\Prob\left(\max_{1\leq n\leq N} (\cS_n^*)>cN/2\right) \leq 
\Prob\left(\max_{1\leq n\leq N} (\cS_n^* - \EXP(\cS_n^*))^2>c^2N^2/16\right)$$
\hskip5cm $\DS \leq  \frac{16 \Var(\cS_N^*)}{c^2 N^2} = \frac{\hC}{N^{1-\beta}}$
\end{proof}

\begin{proof}[Proof of Lemma \ref{LmLDMin}] Let $\cX_n=X_n-v.$ 
As before we write $\DS S_n = \sum_{k=0}^{n-1} X_k$ and 
$\DS \cS_n = \sum_{k=0}^{n-1} \cX_k$.
Then
$$ \{\min(S_n) \leq -m\}\subset\bigcup_{j=0}^\infty A_j$$
where $\DS A_0=\{\max_{1\leq n\leq m}|\cS_n|\geq m\}$ 
and 
$\DS A_j=\{\max_{2^{j-1} m\leq n\leq 2^j m}|\cS_n|\geq 2^{j-1} v m\}$
for $j\geq 1.$ By Lemma \ref{LmLDMax} 
$\DS \Prob(A_j)\leq  \brC \left(2^j m\right)^{1-\beta}.$ Summing over $j$ we obtain the result.
\end{proof}}

\section{Conclusions.} The results proven in this paper show that
for random walks the weak law of large numbers holds in the largest possible space of global
observables, namely $\bG_0.$ On the other hand, the strong law of large numbers fails, 
for general $\bG_0$ observables,
except for the walks with drift in dimension 1. In that case the path of the walk is almost deterministic 
and so the ergodic theory for occupation times could be used. The good news
is that the weak law of large numbers seems to be a good setting for homogenization theory
(cf Theorem~\ref{ThArc}), so the space 
$\bG_0$ could be useful for that purpose. If we have some control on fluctuations over the mesoscopic 
scale as provided, for example, by the space $\bG_\gamma,$ then we can ensure the strong law. 
If we have polynomial control on the mesoscopic scale, as provided by the space $\bG_\gamma^\beta$
then we can estimate the rate of convergence. In particular, our results give optimal rate of convergence
for two important special cases: random walks in random scenery and quasi-periodic observables.

We note that the main ingredient in most proofs is local limit theorem and its extensions, such as the 
Edgeworth expansion used in Section \ref{ScSpeed}. This makes it plausible that similar results hold for
other systems where the local limit theorem holds, including the systems described in 
\cite{BCR, CD15, DG13, DG18, DN-LLT, DN-Inf, DN18}. Another natural research direction motivated by the
present work is limit theorems for global observables. It is likely that just assuming that $F$ belongs to an
appropriate $\bG_*$
will not be enough to derive limit theorems.
For example, the computation of variance for $T_N$ done in Sections \ref{ScWeak}
and \ref{ScSpeed} involve the expression of the form $\tF_z(x):=F(x) F(x+z)$ for a fixed $z\in \Z^d.$
Therefore additional restrictions seem to be required to obtain limit theorems.

Extending our results to more general systems as well as limit theorems for global observables will
be the subject of future work.

\section*{Acknowledgements.} The research of DD was partially sponsored by NSF DMS 1665046. The research of ML was partially supported by PRIN 2017S35EHN (MUR, Italy). The research of PN was partially sponsored by NSF DMS 1800811 and NSF DMS 1952876.

\end{document}